\numberwithin{equation}{section}
\theoremstyle{plain}
\newtheorem{Th}{Theorem}[section]
\newtheorem{Lemma}[Th]{Lemma}
\newtheorem{Cor}[Th]{Corollary}
\newtheorem{Prop}[Th]{Proposition}
 \theoremstyle{definition}
\newtheorem{Rem}[Th]{Remark}
\newtheorem{?}[Th]{Problem}
\newcommand{\Pp}{\mathbb{P}}
\newcommand{\Prob}[1]{\mathbb{P}\{ #1 \}}
\newcommand{\Probx}[2]{\mathbb{P}^{#1}\{ #2 \}}
\newcommand{\ProbxBig}[2]{\mathbb{P}^{#1}\Big\{ #2 \Big\}}
\newcommand{\ProbBig}[1]{\mathbb{P}\Big\{ #1 \Big\}}
\newcommand{\E}[1]{\mathbb{E} #1 }
\newcommand{\EX}[2]{\mathbb{E}^{#1} #2 }
\newcommand{\EXBig}[2]{\mathbb{E}^{#1}\Big( #2 \Big)}
\newcommand{\EBig}[1]{\mathbb{E} #1 }
\newcommand{\Ind}[1]{\mathds{1}_{\{ #1\}}}
\newcommand{\re}{\operatorname{Re}}
\newcommand{\eps}{\epsilon}
\newcommand{\lam}{\lambda}
\newcommand{\tj}{{\tau_j}}
\newcommand{\lone}{{\lambda_1}}
\newcommand{\li}{{\lambda_i}}
\newcommand{\lj}{{\lambda_j}}
\newcommand{\lk}{{\lambda_k}}
\newcommand{\gi}{{\gamma_i}}
\newcommand{\gj}{{\gamma_j}}
\newcommand{\R}{\mathbb{R}}
\newcommand{\N}{\mathbb{N}}
\newcommand{\smallo}[1]{{o\big(#1\big)}}
\newcommand{\D}{\mathcal{D}} \newcommand{\Hil}{\mathcal{H}} \newcommand{\pbracket}[1]{\big[#1\big]_p}
\newcommand{\pbracketBig}[1]{\Big[#1\Big]_p}
\newcommand{\pbracketX}[2]{\big[#1\big]_{#2}}
\newcommand{\Ceps}{\mathcal{C}_\eps}
\newcommand{\Czero}{\mathcal{C}_0}
\newcommand{\EE}{\mathbb{E}}
\newcommand{\CC}{\mathbf{C}}
\newcommand{\pp}{\mathtt{p}}
\newcommand{\NN}{\mathcal{N}}
\newcommand{\bA}{\overline{A}}
\newcommand{\bB}{\overline{B}}
\newcommand{\YY}{\mathbf{Y}}
\newcommand{\ZZ}{\mathbf{Z}}
\newcommand{\tZZ}{\widetilde{\mathbf{Z}}}
\newcommand{\qd}[1]{\langle #1 \rangle}
\newcommand{\tZ}{\widetilde{Z}}
\newcommand{\DD}{\mathbf{D}}
\newcommand{\ab}{\mathbf{a}}
\newcommand{\rr}{h}
\newcommand{\Sph}{\mathbb{S}}
\begin{document}

\title{Long Exit Times near a Repelling Equilibrium}

\author{Yuri Bakhtin}
\author{Hong-Bin Chen}
\address{Courant Institute of Mathematical Sciences\\ New York University \\ 251~Mercer~St, New York, NY 10012 }
\email{bakhtin@cims.nyu.edu, hbchen@cims.nyu.edu }

\subjclass[2010]{60H07, 60H10, 60J60}

\keywords{Vanishing noise limit, unstable equilibrium, exit problem, polynomial decay, Malliavin calculus}

\begin{abstract} 
For a smooth vector field in a neighborhood of a critical point with all positive eigenvalues of the linearization, we consider the associated dynamics perturbed by white noise. Using Malliavin calculus tools, we obtain polynomial asymptotics for probabilities of atypically long exit times
in the vanishing noise limit.
\end{abstract}

\maketitle
\tableofcontents

\section{Introduction}

In this paper, we continue the study of exit time distributions for diffusions obtained by
small noisy perturbations of deterministic dynamical systems
near unstable critical points. We are motivated by applications to the long-term dynamics in noisy heteroclinic networks and extensions of the work in~\cite{Bakhtin2011}, \cite{Bakhtin2010:MR2731621}, \cite{Almada-Bakhtn:MR2802310}. 

The most celebrated series of results on random perturbations of dynamical systems known as the Freidlin--Wentzell theory of metastability, see~\cite{FW}, is based on large deviation estimates and computes the asymptotics of probabilities associated with rare transitions between neighborhoods of stable equilibria. In these systems, the probability of a transition in a given finite time decays exponentially in  $\eps^{-2}$, where $\eps>0$ is the noise magnitude, so it takes time of the order of 
$\exp(c\eps^{-2})$, to realize these transitions.

In the noisy heteroclinic network setting, it turns out that rare events of interest describing atypical transitions and determining the long-term behavior of the diffusion
are 
tightly related to abnormally long stays in neighborhoods of unstable critical points. As a result, the probabilities of those events are related to the tails of the associated exit times, see a discussion of heteroclinic networks in~\cite{long_exit_time-1d-part-2}.

The probabilities we are interested in were shown to decay as a power of $\eps$ if the starting point belongs to the stable manifold of the hyperbolic critical point (saddle) in~\cite{mikami1995}. In the present paper, we provide much more precise asymptotics than the large deviation results of~\cite{mikami1995} and prove a conjecture stated in that paper.

To be more precise, for $\eps>0$, let us consider a diffusion process $X^\eps$ solving an SDE
in $\R^d$, $d\in\N$:
\begin{align}\label{eq:SDE_X}
    dX^\eps_t = b(X^\eps_t)dt + \eps \sigma(X^\eps_t)dW_t
\end{align}
with noise given by the standard multi-dimensional Wiener process $W$ and a smooth full-rank diffusion matrix $\sigma$,
started at a distance of the order of $\eps$ from the origin~$0$ which is
assumed to be an unstable critical point of the smooth vector field $b$.  Let $\lambda_1>0$ be the leading simple eigenvalue of $D b(0)$, i.e., the real parts of all other eigenvalues are less than $\lambda_1$.

We are interested in the exit time $\tau_\eps$ from a domain $\DD$ containing $0$ and having a smooth boundary. The first results showing that the exit times typically behave like
$T_\eps=\frac{1}{\lambda_1}\log\frac{1}{\eps}$ plus $O(1)$ corrections,
 were obtained in~\cite{Kifer1981} and ~\cite{Day95}.
Namely,
it was shown  in~\cite{Kifer1981} that  $\frac{\tau^\eps}{T_\eps} \stackrel{\Pp}{\to} 1$, $\eps\to 0$, and
in~\cite{Day95}, the limiting distribution of
$\tau^\eps-T_\eps$ 
as $\eps\to0$  was 
found. The distributions of exit locations were
studied in~\cite{Eizenberg:MR749377},~\cite{Bakhtin2011}, and (for the case where $Db(0)$ is a Jordan block) in \cite{B-PG:doi:10.1142/S0219493719500229}.

In~\cite{mikami1995}, probabilities of atypical deviations of $\tau_\eps$ from $T_\eps$ were studied. It was proved that in the $1$-dimensional situation $(d=1)$, for any $\rr>1$,
\begin{equation}
\label{eq:mikami1}
\lim_{\eps\to 0} \frac{\log \Prob{\tau_\eps>\rr T_\eps }}{\log \eps} =\rr-1.
\end{equation}
and a combination of results in~\cite{Kifer1981}  and~\cite{mikami1995} gives that for 
all $d\ge 1$ and 
every $\rr>1$ there are finite positive numbers $\mu_-(\rr),\mu_+(\rr)>0$ such that
\begin{equation}
\label{eq:mikami2}
\mu_-(\rr)\le \liminf_{\eps\to 0} \frac{\log \Prob{\tau_\eps>\rr T_\eps }}{\log \eps} \le \limsup_{\eps\to 0} \frac{\log \Prob{\tau_\eps>\rr T_\eps }}{\log \eps} < \mu_+(\rr).
\end{equation}
In~\cite{mikami1995} it is actually conjectured that 
\begin{equation}
\label{eq:mikami-conjecture}
\mu_-(\rr)=\mu_+(\rr)=\mu(\rr),
\end{equation}
where
\begin{equation}
\label{eq:mikami3}
\mu(\rr)= \sum_{j=1}^d\left(\left(\frac{\rr\re \lj }{\lambda_1} -1\right)\vee 0\right),
\end{equation}
and $\lambda_1,\ldots,\lambda_d$ in this formula are the eigenvalues of $D b(0)$.

In~\cite{long_exit_time} and~\cite{long_exit_time-1d-part-2}, the  logarithmic asymptotics
of~\eqref{eq:mikami1} for the $1$-dimensional situation  was improved and it was shown that for 
any $\rr>1$, for a range of deterministic initial conditions~$X^\eps_0=x$ near $0$, 
\begin{equation}
\label{eq:recalling-old-result}
\Prob{\tau_\eps>\rr T_\eps}=\psi(x)\eps^{\rr-1}(1+o(1)),\quad \eps\to 0,
\end{equation}
and the coefficient $\psi(x)>0$ was computed explicitly. The paper~\cite{long_exit_time} was based on Malliavin calculus techniques and~\cite{long_exit_time-1d-part-2} used more elementary tools.

In the present paper, we consider the situation where $d\in\N$ is arbitrary and the eigenvalues of $\nabla b(0)$ are real and satisfy $\lambda_1>\lambda_2>\ldots>\lambda_d>0$. For this case, we prove the conjecture of~\cite{mikami1995} showing that relations  \eqref{eq:mikami2}--\eqref{eq:mikami3} hold true. In fact, instead of the logarithmic equivalence in \eqref{eq:mikami2}, we prove stronger estimates similar to~\eqref{eq:recalling-old-result} extending the latter to the higher-dimensional setting.
For domains~$\DD$ of a special type (preimages of rectangular domains under a linearizing
conjugacy), our Theorem~\ref{thm:main} states that there is $p>0$ such that, uniformly over deterministic initial conditions  $X^\eps_0=x$ at distance of the order of $\eps$ from $0$,
\begin{equation*}
\Prob{\tau_\eps>\rr T_\eps} = \psi_{\rr}(x)\eps^{\mu(\rr)}(1+ o(\eps^p)),
\end{equation*}
with an explicit expression for the coefficient $\psi_{\rr}(x)>0$. In fact, we prove a more general estimate on the tail of  
$\tau_\eps$.

The idea of the proof is the following. We treat the dynamics described by~\eqref{eq:SDE_X} as a perturbation of the linear dynamics given by the linearization of $b$ at $0$. For truly linear dynamics with additive noise the solution is given by stochastic It\^o integrals of deterministic quantities. Thus it is a Gaussian process allowing for a direct computation which, in fact, was behind the conjecture \eqref{eq:mikami2}--\eqref{eq:mikami3} of~\cite{mikami1995}. The main difficulty is to lift this computation to the general nonlinear situation. In particular, similarly 
to~\cite{long_exit_time} we choose to work with Malliavin calculus tools in order to estimate
densities of random variables that we want to treat as perturbations of Gaussian ones.
Unlike~\cite{long_exit_time}, we use results of~\cite{bally2014} to estimate the discrepancy between the Gaussian densities and the perturbed ones. These estimates are valid only for evolution times of the order of $\theta\log\eps^{-1}$ with small values of $\theta$, so we have to apply them sequentially multiple times in order to get to $\rr T_\eps$, thus creating an iteration scheme similar to that of~\cite{long_exit_time}.

The analysis for more general domains  can be partially reduced to the special domains 
defined above via the rectifying conjugacy. We can obtain, see Corollary~\ref{cor:1}, that there are constants $\phi_\pm(x)$  such that
\begin{equation*}
    \phi_-(x)\eps^{\mu(\rr)}(1+o(\eps^p))\leq \Prob{\tau_\eps>\rr T_\eps+r(\eps)} \leq \phi_+(x)\eps^{\mu(\rr)}(1+o(\eps^p)).
\end{equation*}
The slight discrepancy between the upper and lower estimates is due to the fact that the travel time along the drift vector field between the boundaries of domains immersed into one another depends on the starting point on the boundary. We give a slightly more precise result (Corollary~\ref{prop:2}) that takes these travel times into account and note here that further progress in understanding of exit times for general domains will be achieved as more information on the geometric  properties of the exit location distribution becomes available. The  asymptotics of the exit location distribution will be addressed in our forthcoming work.

The paper is organized as follows. In Section~\ref{sec:setting}, we give a technical description of the setting and state our main results precisely. The proof is spread over Sections~\ref{sec:proof-main} through~\ref{section:density_estimate}.
The main result is derived from the comparison to the linearized problem in Section~\ref{sec:proof-main}. An iterative scheme of sequential approximations that this comparison is based on is given in Section~\ref{section:approximations}. Each step of this scheme is in turn based on a density discrepancy estimate that we derive using Malliavin calculus tools in Section~\ref{section:density_estimate}.

{\bf Acknowledgment.} YB is grateful to NSF for partial support via grant DMS-1811444.

\section{Setting and main results}
\label{sec:setting}

Let $d\in \N$ and let simply connected domains $D_1,D_2,\DD\subset \R^d$ satisfy  
\begin{equation}\label{eq:D_inclusions}
                0\in  D_1 \subset \overline{D_1} \subset \DD \subset \overline{\DD}\subset D_2.
\end{equation}
We consider a $C^5$ vector field $b:\R^d \to \R^d$  and the flow $(S^t)$ generated by $b$:
\begin{align}\label{eq:ODE}
    \begin{split}
        \tfrac{d}{dt}S^tx & = b(S^tx),\\
    S^0x &= x.
    \end{split}
\end{align}
Since we are interested in the dynamics inside $\DD$, by adjustments outside $\DD$, we assume $b$ and its derivatives are bounded.
We assume that the following conditions hold:
\begin{itemize}
    \item  $b(x)=\mathbf{a}x+q(x)$ where 
        \begin{itemize}
            \item $|q(x)|\leq C_q|x|^2$ with a positive constant $C_q$,
            \item $\mathbf{a}$ is a $d\times d$ diagonal matrix with real entries $\lambda_1>\lambda_2>...>\lambda_d>0$;
        \end{itemize}
    \item for all open sets $D_0$ satisfying $0\in D_0\subset D_1$,
            \begin{equation}\label{eq:condition_domain_D}               
\sup_{x\in \partial D_0} t_{D_2}(x) < \infty,
            \end{equation}
where 
            \begin{equation}t_{D}(x) = \inf\{t>0: S^tx \not\in D\}, \quad D\subset \R^d,\ x\in\R^d.
            \label{eq:t_D}
            \end{equation}
\end{itemize}

\medskip

For brevity we will denote the vector filed  given by $x\to\mathbf{a}x$ by $\ab$.
By the Hartman--Grobman Theorem (c.f. Theorem 6.3.1 from \cite{Dynamics}), there 
is an open neighborhood $O$ of $0$ and a homeomorphism $f:O\to f(O)$ conjugating the flow $S$ generated by the vector field $b$ to the linear flow  generated by $ \mathbf{a}$, namely, 
\begin{align}\label{eq:conjugation}
    \frac{d}{dt}f(S_tx)=\mathbf{a}f(S_tx).
\end{align}
\begin{itemize}
    \item in addition, we assume that $f$ is a $C^5$ diffeomorphism.

\end{itemize}    
\begin{Rem}
Due to \cite{Sternberg-1957:MR96853},  for this  $C^5$ conjugacy condition to hold in our setting, it suffices to require (i) a smoothness condition:
$b$ is $C^k$ for some $k\ge 5 \vee( \lambda_1 /  \lambda_n)$, and (ii) a no-resonanse condition:
\[\lambda_k\ne m_1\lambda_1+\ldots+m_d \lambda_d\]
for all $k=1,\ldots,d$ and all  nonnegative integer coefficients $m_1,\ldots, m_d$ satisfying $m_1+\ldots+m_d\ge 2$.
\end{Rem}

The vector field $\mathbf{a}$ is the pushforward of $b$ under $f$, and since $\mathbf{a}$ is diagonal, $f$ can be chosen to satisfy
\begin{align}\label{eq:diffeo-norm}
    f(0)=0,\quad Df(0)=I,
\end{align}
where $I$ is the identity matrix.

\bigskip

We are  interested in the limiting behavior of random perturbations of the ODE~\eqref{eq:ODE} given by  the SDE~\eqref{eq:SDE_X} as $\eps$ tends to $0$. In~\eqref{eq:SDE_X},
\begin{itemize}
    \item $\eps\in(0,1)$ is the  noise amplitude parameter;
    \item $(W_t,\mathcal{F}_t)$ is a standard $n$-dimensional Wiener process with $n\geq d$;
    \item $\sigma$ is a map from $\R^d$ into the space of $d\times n$ matrices satisfying
        \begin{itemize}
            \item $\sigma$ is $C^3$ (and, by adjustments outside $\DD$, we may assume that $\sigma$ and its derivatives are bounded),
            \item $\sigma(0):\R^n \to \R^d$ is surjective.
        \end{itemize}
\end{itemize}
To simplify the notation, we often suppress the dependence on $\eps$. In particular, we often write $X_t$ instead of $X^\eps_t$.

\bigskip

We need some definitions to state our main result. We start by describing the exit event:
\begin{itemize}
    \item for a measurable set  $A \subset \R^d$, we define the exit time 
    \begin{align}\label{eq:def_tau_A}
        \tau_A=\inf\{t>0: X_t\not \in A \};
    \end{align}
    \item for $L^j_-, L^j_+\in \R$, $j=1,2,\dots, d$, we define $\mathfrak{R}=f^{-1}\big(\prod_{j=1}^d [L_-^j, L_+^j]\big)\subset O$ with $ 0 \in \mathring{\mathfrak{R}}$ to be such a set that its preimage under $f$ is a box and that its interior contains the origin (see Figure \ref{figure:1});
    \item for any $q>0$, let $r:[0,1] \to \R$ be any function satisfy
    \begin{align}\label{eq:condition_for_r(eps)}
    |r(\eps)-r(0)|=\mathcal{O}(\eps^q).
    \end{align}
\end{itemize}
The theorem is concerned with events of the form
\begin{align*}
    \{\tau_\mathfrak{R}>\alpha \log\eps^{-1}+r(\eps)\}
\end{align*}
for some $\alpha\geq0$. As $\tau_\mathfrak{R}$ is of order $\alpha \log\eps^{-1}$, the term $r(\eps)$ is interpreted as a small perturbation. 

\smallskip

\begin{figure}
    \centering
    \begin{tikzpicture}[use Hobby shortcut,closed=true]
    \filldraw [black] (0,0) circle (1pt);
    \filldraw [black] (6,0) circle (1pt);
    \node [right] at (0,0.2) {$0$};
    \node [right] at (0,-0.5) {$\mathfrak{R}$};
    \node [right] at (-0.2,1.04) {$O$};
    \node [right] at (0.1, 2) {$\mathbf{D}$};
    \node [right] at (6,0.2) {$0$};
    \node [right] at (5.4,-0.5) {$f(\mathfrak{R})$};
    \node [right] at (5.35,1.1) {$f(O)$};
    \node [right] at (2.98,0.25) {$f$};

    \draw [->,>=stealth] (2.3, 0) -- (4.2, 0);
    
    \draw (0,0) circle (1.42cm);
    \draw (-1.6,0)..(-1.6,0.65)..(-1.45,1.55)..(-1.3,1.8)..(0,2.6)..(1,2.4)..(1.35,2)..(1.78,1)..(1.85,0)..(1.7,-1)..(0.9,-1.9)..(0,-1.7)..(-1.5,-1.8)..(-1.6,0);

    \draw plot [smooth, tension=2] coordinates { (-1,-0.6) (-0.7,0) (-0.6,0.75)};
    
    \draw plot [smooth, tension=2] coordinates { (-0.6,0.75) (0.2,0.65) (1,0.7)};
    
    \draw plot [smooth, tension=2] coordinates { (0.65,-1) (1,-0.25) (1,0.7)};
    
    \draw plot [smooth, tension=2] coordinates { (0.65,-1) (-0.2,-1) (-1,-0.6)};
    
    \draw [yshift=-0.2cm] (5.15,0.8) -- (6.8,0.8) -- (6.8,-0.65) -- (5.15,-0.65)  -- cycle;

    \draw [yshift=-0.2cm]  (4.67,0)..(4.68,0.8)..(6.65,1.65)..(7.2,0.4)..(6.35,-1.5)..(5.4,-1.2)..(4.67,0);

\end{tikzpicture}
    \caption{The diffeomorphism $f:O\to f(O)$ maps $\mathfrak{R}$ onto $f(\mathfrak{R})$ which is a box containing $0$.}\label{figure:1}
\end{figure}
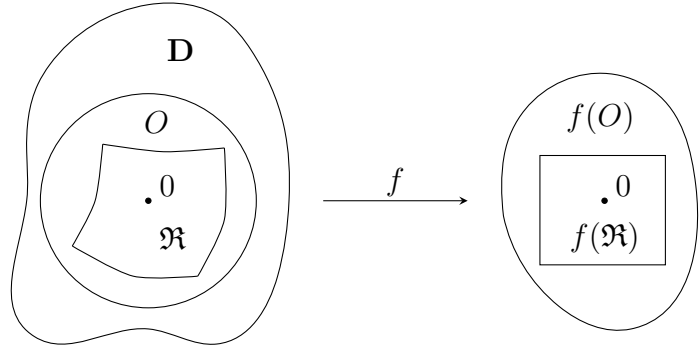

Next, we introduce definitions characterizing the decay rate:

\begin{itemize}

    \item for $\alpha >0$, let $i(\alpha)\in \{1,2,...,d+1 \}$ satisfy 
    \begin{align}\label{eq:def_i=i(alpha)}
        \tfrac{1}{\lambda_{i(\alpha)-1}}< \alpha \leq \tfrac{1}{\lambda_{i(\alpha)}}
    \end{align}
    where we agree that $\lambda_0 = \infty$ and $\lambda_{d+1}=0$;
    \item the exponent determining the power decay, as a function of $\alpha$, is given by
    \begin{align}\label{eq:beta_alpha}
    \beta(\alpha) = \sum_{j=1}^d\big((\lj\alpha -1)\vee 0\big)= \sum_{j=1}^{i(\alpha)-1}(\lj\alpha -1)= \mu(\lambda_1\alpha),
    \end{align} 
     where $\mu(\cdot)$ was defined in~\eqref{eq:mikami3}.
\end{itemize}

\smallskip

We will consider initial conditions $X_0=\eps x$ satisfying $|x|\leq K(\eps)$ for an admissible function $K(\cdot)$:
\begin{itemize}
    \item for a fixed $\alpha\geq 0$, a function $K:(0,1]\to [0,+\infty)$ is said to be admissible if it satisfies, with $i=i(\alpha)$,
    \begin{align*}
        \begin{cases}
            \lim_{\eps \to 0}\eps^{1-\lambda_{i}\alpha} K(\eps) = 0, &\quad \text{if } i\leq d\text{ and }\alpha<\tfrac{1}{\lambda_i},\\
            \lim_{\eps \to 0}\eps^{1-\lambda_{i+1}\alpha} K(\eps) = 0, &\quad \text{if }i\leq d,\text{ and }\alpha=\tfrac{1}{\lambda_i},\\
            \lim_{\eps \to 0}\eps^{1-c} K(\eps) = 0 \text{ for some $c\in (0,1)$}, &\quad \text{if } i = d+1.\\
        \end{cases}
    \end{align*}
\end{itemize}

\smallskip

Lastly, we describe the limiting object:

\begin{itemize}

    \item let $d\times d$ matrix $\Czero$ be given by
    \begin{align}\label{eq:def_C_0_Czero}
        \Czero^{jk}= \sum_{l=1}^n\frac{\sigma^j_l(0)\sigma^k_l(0)}{\lj + \lk};
    \end{align}
    \item for $x \in \R^d$ and $i=1,\ldots,d$, we define 
    \begin{align}\label{eq:projection_notation1}
        \begin{split}
            &x^{<i}=(x^1,x^2,...,x^{i-1})\in \R^{i-1},\quad x^{>i}=(x^{i+1},...,x^d)\in \R^{d-i},\\ &x^{\geq i} =(x^{i},x^{i+1},...,x^d)\in \R^{d-i+1};
              \end{split}
    \end{align} 
    \item for $\alpha \geq 0$, some small perturbation limit $r(0)\in\R$, a set $\mathfrak{R}$ and $x\in\R^d$, we define, with $i=i(\alpha)$,
  \begin{align}\label{eq:psi}
    \begin{split}
        & \psi_{\alpha, r(0),\mathfrak{R}}(x)=\\
    &\begin{cases}\displaystyle
    \frac{\prod_{j< i}(L^j_+-L^j_-)e^{-\lj r(0)}}{\sqrt{(2\pi)^d\det \Czero}}\int_{\R^{d-i+1}}e^{-\frac{1}{2}z^\intercal\Czero^{-1}z}\big|_{z^{<i}=-x^{<i}} dz^{\geq i},& \alpha <\frac{1}{\li}, \\
    \displaystyle
    \frac{\prod_{j< i}(L^j_+-L^j_-)e^{-\lj r(0)}}{\sqrt{(2\pi)^d\det \Czero}}\int_{(e^{-\li r(0)}[L^i_-,L^i_+]-x^i))\times\R^{d-i}}e^{-\frac{1}{2}z^\intercal\Czero^{-1}z}\big|_{z^{<i}=-x^{<i}}dz^{\geq i},&\alpha=\frac{1}{\li}.
    \end{cases}
    \end{split}
\end{align}
 \end{itemize}
If $i=d+1$, then  the integrals in \eqref{eq:psi} are understood to be simply  $e^{-\frac{1}{2}x^\intercal \Czero^{-1}x}$. 

We are now ready to state the main result.

\begin{Th}
\label{thm:main}
Suppose $X_t$ solves \eqref{eq:SDE_X} with $X_0=\eps x$, and $r$ satisfies \eqref{eq:condition_for_r(eps)} for some $q>0$.

There is a constant $L_0\geq 0$ such that, for every $\alpha\geq 0$, every admissible $K$, every $\mathfrak{R}\subset f^{-1}([-L_0,L_0]^d)$, the following holds
\begin{align}\label{eq:main}
    \sup_{|x|\leq K(\eps)}\big|\eps^{-\beta(\alpha)}\Prob{\tau_\mathfrak{R}>\alpha \log\eps^{-1}+r(\eps)} - \psi(x) \big| = \smallo{\eps^p}
\end{align}
for $\psi =\psi_{\alpha,r(0),\mathfrak{R}}$ and some $p=p(\alpha,q,\lambda, \sigma, f) \in (0,1)$. 
\end{Th}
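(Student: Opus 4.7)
The plan is to reduce \eqref{eq:main} to an explicit Gaussian computation and then transfer the result to the true nonlinear problem via iterated Malliavin-calculus density estimates. First, I would pass from $X_t$ to $Y_t=f(X_t)$ using the conjugacy of \eqref{eq:conjugation}. By It\^o's formula, $Y_t$ solves an SDE whose drift is exactly the linear field $\mathbf{a}y$ (up to an $O(\eps^2)$ It\^o-correction) and whose diffusion coefficient $\tilde\sigma(y)=Df(f^{-1}(y))\sigma(f^{-1}(y))$ satisfies $\tilde\sigma(0)=\sigma(0)$ by \eqref{eq:diffeo-norm}. The event $\{\tau_\mathfrak{R}>T\}$ becomes the event that $Y_s\in\prod_j[L^j_-,L^j_+]$ for all $s\le T$, with initial condition $Y_0=f(\eps x)=\eps x+O(\eps^2|x|^2)$. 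This reduces the problem to a stay-in-box probability for an SDE with drift in the diagonal form $\mathbf{a}$.

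Next, I would analyse the purely Gaussian benchmark $Z_t$ driven by the frozen diffusion $\sigma(0)$, i.e.\ $dZ_t=\mathbf{a}Z_t\,dt+\eps\sigma(0)dW_t$ with $Z_0=\eps x$. The rescaled variable $e^{-\mathbf{a}t}Z_t/\eps$ is Gaussian with mean $x$ and covariance converging to $\Czero$. At $t=\alpha\log\eps^{-1}+r(\eps)$, the event $\{Z_t\in\prod_j[L^j_-,L^j_+]\}$ reads coordinatewise as $(e^{-\mathbf{a}t}Z_t/\eps)^j\in\eps^{\lambda_j\alpha-1}e^{-\lambda_j r(\eps)}[L^j_-,L^j_+]$. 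For $j<i(\alpha)$, $\lambda_j\alpha>1$ forces the interval to shrink like $\eps^{\lambda_j\alpha-1}$ around $0$, yielding the factor $\prod_{j<i}(L^j_+-L^j_-)e^{-\lambda_j r(0)}$ together with the Gaussian density at the pinned coordinates $z^{<i}=-x^{<i}$; for $j>i(\alpha)$, $\lambda_j\alpha<1$ causes the interval to cover $\R$ in the limit; and for $j=i(\alpha)$ in the marginal case $\alpha=1/\lambda_i$, it becomes the finite window $e^{-\lambda_i r(0)}[L^i_-,L^i_+]-x^i$. Aggregating reproduces exactly $\psi(x)\eps^{\beta(\alpha)}$ as in \eqref{eq:psi}. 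The stay-in-box event coincides with the terminal event up to negligible error, since the strictly expanding drift keeps the backward mean of $Z_s$ inside a shrunken box and the backward noise contribution is only $O(\eps)$.

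The third and most delicate step is to transfer this Gaussian conclusion to $Y_t$. The density-comparison results of \cite{bally2014} quantitatively control the distance between the density of an It\^o diffusion and the density of its Gaussian linearization, but only on time horizons of length $\theta\log\eps^{-1}$ for small $\theta>0$. I would partition $[0,\alpha\log\eps^{-1}+r(\eps)]$ into $M=O(1/\theta)$ subintervals and, using the Markov property at the partition times, iteratively replace the true law of $Y_t$ by its Gaussian surrogate. This is the scheme to be developed in Section~\ref{section:approximations}, based on the single-step density estimate of Section~\ref{section:density_estimate}. If each single-step error is controlled by $\eps^{p'}$, then $M$ iterations accumulate to $O(\eps^{p'}/\theta)=o(\eps^p)$ for $p<p'$.

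The hardest technical point will be the single-step Malliavin estimate itself. Along trajectories driven by $\mathbf{a}$, the Malliavin covariance matrix of $Y_t$ is severely ill-conditioned: coordinates with smaller $\lambda_j$ expand much more slowly than those with larger $\lambda_j$, so its condition number diverges polynomially in $\eps^{-1}$. Invoking \cite{bally2014} quantitatively therefore requires an $\eps$-dependent rescaling of coordinates matched to the spectrum of $\mathbf{a}$, turning the rescaled process into one whose Malliavin matrix is uniformly nondegenerate. An additional subtlety is uniformity in $|x|\le K(\eps)$: the admissibility condition is exactly what keeps the deterministic contribution $e^{\mathbf{a}t}\eps x$ from overtaking the Gaussian fluctuations in the coordinates $j\ge i(\alpha)$ at time $t=\alpha\log\eps^{-1}$, and one must track how $\psi(x)$ varies as $|x|$ is allowed to grow. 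Once these ingredients are in place, combining the Gaussian computation with the iterated approximation scheme yields \eqref{eq:main}.
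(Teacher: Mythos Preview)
Your proposal is correct and follows essentially the same approach as the paper: reduce to $Y=f(X)$ via the conjugacy, rescale coordinatewise by $\eps^{-1}e^{-\lambda_j t}$ (the paper writes $Y^j_t=\eps e^{\lambda_j t}(y^j+U^j_t)$ and works with $U_t$), compare the density of $U_T$ to its Gaussian surrogate via the Bally--Caramellino estimate, and iterate over $N=O(1)$ subintervals of length $\theta\log\eps^{-1}$ to cover the full horizon. Your ``stay-in-box equals terminal event'' step is the paper's Lemma~\ref{Lemma:rough_approximation}, and your iterated Malliavin scheme is exactly Lemmas~\ref{lemma:iteration}--\ref{Lemma:density_est}; the rescaling you anticipate to tame the ill-conditioned Malliavin matrix is precisely the passage to $U_t$.
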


\bigskip

For a general domain $\DD$, we choose $L_{\pm}^j$ small enough to guarantee $\mathfrak{R}\subset D_1$, where~$D_1$ was introduced in \eqref{eq:D_inclusions}. 
Due to~\eqref{eq:condition_domain_D}, $T_- = \inf_{z\in \partial\mathfrak{R}} t_{D_1}(z)$ and $T_+ = \sup_{z\in \partial\mathfrak{R}} t_{D_2}(z)$ are well-defined. Setting $\phi_\pm(x)= \psi_{\alpha,r(0)- T_\pm,\ \mathfrak{R}}(x)$, we obtain:

\begin{Cor}\label{cor:1}
Under the conditions of Theorem~\ref{thm:main},
\begin{align}\label{eq:cor}
    \phi_-(x) + \smallo{\eps^p} \leq \eps^{-\beta(\alpha)}\Prob{\tau_\DD>\alpha \log\eps^{-1}+r(\eps)} \leq \phi_+(x) + \smallo{\eps^p}
\end{align}
uniformly over $|x|\leq K(\eps)$.
\end{Cor}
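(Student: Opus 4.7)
The plan is to bootstrap from Theorem~\ref{thm:main} by sandwiching the exit time $\tau_\DD$ between two shifted copies of $\tau_\mathfrak{R}$ corresponding to the deterministic travel times between $\partial\mathfrak{R}$ and $\partial\DD$, and then applying Theorem~\ref{thm:main} with $r(\eps)$ replaced by $r(\eps)-T_\pm$.

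First I would set up a purely deterministic sandwich. The inclusions $\mathfrak{R}\subset D_1\subset\DD\subset D_2$ imply $t_{D_1}(z)\le t_\DD(z)\le t_{D_2}(z)$ for every $z\in\partial\mathfrak{R}$; taking infimum and supremum over $z$, both finite by~\eqref{eq:condition_domain_D} applied to $D_0=\mathring{\mathfrak{R}}$, gives $T_-\le t_\DD(z)\le T_+$ uniformly in $z\in\partial\mathfrak{R}$.

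Next I would lift this sandwich to the SDE via the strong Markov property at $\tau_\mathfrak{R}$. A standard Gronwall estimate applied to $X_{\tau_\mathfrak{R}+s}-S^sX_{\tau_\mathfrak{R}}$ over the bounded horizon $[0,T_++1]$, combined with Gaussian tail bounds on the driving stochastic integral (recall that $b$ and $\sigma$ are Lipschitz), yields
\begin{align*}
\Prob{\sup_{0\le s\le T_++1}|X_{\tau_\mathfrak{R}+s}-S^sX_{\tau_\mathfrak{R}}|>\eps^{1/2}}=\smallo{\eps^N}
\end{align*}
for every $N\in\N$. By compactness of $\partial\mathfrak{R}$ together with~\eqref{eq:condition_domain_D}, the outward normal component of $b$ at the deterministic exit points on $\partial D_1$ and $\partial D_2$ is uniformly bounded away from zero, so an $\eps^{1/2}$ spatial discrepancy between the SDE and the deterministic flow translates into an $O(\eps^{1/2})$ shift of the exit time. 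Consequently,
\begin{align*}
\tau_\mathfrak{R}+T_--\eps^{1/2}\le\tau_\DD\le\tau_\mathfrak{R}+T_++\eps^{1/2}
\end{align*}
on an event of probability $1-\smallo{\eps^N}$.

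Finally, setting $r_\pm(\eps)=r(\eps)-T_\pm\mp\eps^{1/2}$, each $r_\pm$ satisfies~\eqref{eq:condition_for_r(eps)} with exponent $\min(q,1/2)$ and $r_\pm(0)=r(0)-T_\pm$. Combining the sandwich above with Theorem~\ref{thm:main} applied to $\tau_\mathfrak{R}$ with the perturbations $r_\pm$ yields both inequalities of~\eqref{eq:cor}, the $\smallo{\eps^N}$ bad-event error being absorbed by taking $N>\beta(\alpha)+p$. The main obstacle is the uniform-transversality step that converts spatial into temporal discrepancy; this requires some care at portions of $\partial D_1,\partial D_2$ where the flow could graze tangentially, but follows routinely from the expanding linearization at $0$ and~\eqref{eq:condition_domain_D}.
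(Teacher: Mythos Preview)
Your overall approach mirrors the paper's: use the strong Markov property at $\tau_\mathfrak{R}$, a Freidlin--Wentzell type deviation estimate over the bounded horizon $[0,T_++1]$ (the paper packages this as Lemma~\ref{lemma:FW}), and then sandwich $\tau_\DD$ between $\tau_\mathfrak{R}+T_-$ and $\tau_\mathfrak{R}+T_+$ before invoking Theorem~\ref{thm:main} with $r(\eps)$ shifted by $T_\pm$.

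The one genuine problem is your transversality step. You claim that the outward normal component of $b$ on $\partial D_1$ and $\partial D_2$ is bounded away from zero, and that this ``follows routinely from the expanding linearization at $0$ and~\eqref{eq:condition_domain_D}''. It does not: no smoothness of $\partial D_1,\partial D_2$ is assumed for Corollary~\ref{cor:1} (the transversality hypothesis is introduced only as an \emph{additional} assumption for Corollary~\ref{prop:2}), and~\eqref{eq:condition_domain_D} merely bounds exit times---it says nothing about the angle at which trajectories cross. The auxiliary domains $D_1,D_2$ could well have boundary portions tangent to the flow.

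Fortunately the transversality detour is unnecessary, and dropping it is exactly what the paper does. The strict nestings in~\eqref{eq:D_inclusions} already give positive separations $\delta_1=\dist(\overline{D_1},\partial\DD)>0$ and $\delta_2=\dist(\overline{\DD},\partial D_2)>0$. On your good event $\{\sup_{0\le s\le T_++1}|X_{\tau_\mathfrak{R}+s}-S^sX_{\tau_\mathfrak{R}}|\le\eps^{1/2}\}$ with $\eps^{1/2}<\delta_1\wedge\delta_2$: for $s\le T_-$ the deterministic flow lies in $\overline{D_1}$, so $X_{\tau_\mathfrak{R}+s}\in\DD$; and at time $s=t_{D_2}(X_{\tau_\mathfrak{R}})\le T_+$ the flow reaches $\partial D_2$, so $X_{\tau_\mathfrak{R}+s}\notin\overline{\DD}$. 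Hence $\tau_\mathfrak{R}+T_-\le\tau_\DD\le\tau_\mathfrak{R}+T_+$ on the good event, with no $\eps^{1/2}$ time correction needed at all. The rest of your argument---applying Theorem~\ref{thm:main} with $r_\pm(\eps)=r(\eps)-T_\pm$ and absorbing the bad-event probability by taking $N>\beta(\alpha)+p$---then goes through unchanged.
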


Taking the logarithm on both both sides of \eqref{eq:cor}, we obtain:
\begin{Cor}\label{cor:2}
Under the conditions of Theorem~\ref{thm:main}, there is a constant $C>0$ such that
\begin{align*}
    \sup_{|x|\leq K(\eps)}\bigg|\frac{\log \Prob{\tau_\DD>\alpha \log\eps^{-1}+r(\eps)}}{\log \eps}-\beta(\alpha)\bigg|\leq \frac{C}{
 |\log \eps|}.
\end{align*}
\end{Cor}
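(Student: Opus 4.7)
The plan is to take logarithms in the two-sided bound of Corollary~\ref{cor:1}, divide by $\log\eps$, and control the resulting error terms. Multiplying \eqref{eq:cor} by $\eps^{\beta(\alpha)}$ gives, uniformly in $|x|\leq K(\eps)$,
\[
\phi_-(x)\eps^{\beta(\alpha)} + \smallo{\eps^{\beta(\alpha)+p}} \leq \Prob{\tau_\DD>\alpha \log\eps^{-1}+r(\eps)} \leq \phi_+(x)\eps^{\beta(\alpha)} + \smallo{\eps^{\beta(\alpha)+p}}.
\]
To turn these additive bounds into the desired logarithmic statement, I first need that $\phi_\pm(x)=\psi_{\alpha,r(0)-T_\pm,\mathfrak{R}}(x)$ is bounded above and bounded below away from zero, uniformly on $\{|x|\leq K(\eps)\}$; this is the only step that requires any actual work.

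To obtain this two-sided bound, I inspect the explicit formula \eqref{eq:psi}. Positive-definiteness of $\Czero$ (which follows from surjectivity of $\sigma(0)$) implies that $e^{-\frac{1}{2}z^\intercal \Czero^{-1}z}$ is a nondegenerate Gaussian density, and completing the square in $z^{\geq i}$ with $z^{<i}$ fixed at $-x^{<i}$ expresses $\phi_\pm(x)$ as a strictly positive, continuous function of $x^{<i}$ with Gaussian decay in $|x^{<i}|$, multiplied (in the marginal case $\alpha=1/\lambda_i$) by the Gaussian mass of an interval in $\R$ that depends continuously on $x^i$. Both factors are uniformly bounded above, and on the set $\{|x|\leq K(\eps)\}$ they are bounded below by a positive constant, with the constant depending on $K$.

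Given a two-sided bound $0<c\leq \phi_\pm(x)\leq C'$ on the relevant set, the additive $\smallo{\eps^{\beta(\alpha)+p}}$ error can be absorbed into a uniform multiplicative factor $\phi_\pm(x)\eps^{\beta(\alpha)}(1+o(\eps^p))$ for $\eps$ small enough, and taking logarithms yields, uniformly in $x$,
\[
\log \Prob{\tau_\DD>\alpha\log\eps^{-1}+r(\eps)} = \beta(\alpha)\log\eps + \log\phi_\pm(x) + O(\eps^p),
\]
with $\pm$ selecting the lower or upper bound. Dividing by $\log\eps$ and combining the two one-sided estimates,
\[
\bigg|\frac{\log \Prob{\tau_\DD>\alpha \log\eps^{-1}+r(\eps)}}{\log \eps}-\beta(\alpha)\bigg|\leq \frac{\max(|\log\phi_-(x)|,|\log\phi_+(x)|)+O(\eps^p)}{|\log\eps|}.
\]
The uniform bound on $|\log\phi_\pm|$ from the previous paragraph bounds the right-hand side by $C/|\log\eps|$ for some constant $C$, which is precisely the claimed estimate. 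The boundedness check on $\phi_\pm$ is the only nontrivial step, and it is mild; the rest is standard log-manipulation together with the absorption of the additive error into a multiplicative one.
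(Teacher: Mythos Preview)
Your approach is exactly the paper's: the paper's entire proof is the one-line remark ``Taking the logarithm on both sides of \eqref{eq:cor}, we obtain'' Corollary~\ref{cor:2}. Your write-up simply spells out what that line means.

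One caveat worth flagging: your claim that $\phi_\pm(x)$ is bounded below by a fixed positive constant on $\{|x|\le K(\eps)\}$ is not automatic. From \eqref{eq:psi}, $\phi_\pm(x)$ has Gaussian decay in $x^{<i}$, and the admissibility condition on $K$ permits $K(\eps)\to\infty$ polynomially in $\eps^{-1}$; in that regime $\inf_{|x|\le K(\eps)}\phi_-(x)$ can tend to $0$ faster than any power of $\eps$, and the passage from the additive error $o(\eps^{\beta(\alpha)+p})$ to a multiplicative $(1+o(\eps^p))$ breaks down. The paper's one-line proof does not address this either, so you are not missing anything relative to the paper; but strictly speaking the stated uniformity in Corollary~\ref{cor:2} is only cleanly justified for bounded $K$ (or, more generally, for $K$ with $K(\eps)^2=O(\log\eps^{-1})$), which is all that is needed for the intended application to Mikami's conjecture.
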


\bigskip

\begin{Rem}
\begin{enumerate}
    \item When $d=1$, Proposition \ref{thm:main} is a slight improvement of the result in \cite{long_exit_time}.
    \item If $q =0$, then the above results still hold for  $p=0$. 
    \item If $X_0 = \eps \xi^\eps$ where the random variable $\xi^\eps$ satisfies $\Prob{|\xi^\eps|>K(\eps)} = \smallo{\eps^{\beta(\alpha)}}$, then \eqref{eq:main} and \eqref{eq:cor} imply, respectively, 
    \begin{gather*}
       \lim_{\eps \to 0}\big|\eps^{-\beta(\alpha)}\Prob{\tau_\mathfrak{R}>\alpha \log\eps^{-1}+r(\eps)}-\E{\psi(\xi^\eps)}\big| = 0;\\
\E{\phi_-(\xi^\eps)} + \smallo{1} \leq \eps^{-\beta(\alpha)}\Prob{\tau_\DD>\alpha \log\eps^{-1}+r(\eps)} \leq \E{\phi_+(\xi^\eps)} + \smallo{1}. 
    \end{gather*}
    \item In comparison with~\cite{mikami1995}, we make stronger smoothness assumptions on the coefficients and an additional assumption on the smoothness of the linearizing conjugacy. These assumptions are required for our Malliavin calculus approach. Namely, we must ensure that certain higher-order Malliavin derivatives of the diffusion process exist and admit useful bounds. 
In addition, we require the eigenvalues of linearization to be simple and positive.   
    In this slightly more restrictive setting, our Corollary~\ref{cor:2} improves and generalizes \cite[Theorem~1.3 and~Proposition~1.4]{mikami1995} and implies  \cite[Conjecture~1.5]{mikami1995}.

\end{enumerate}
\end{Rem}

Under additional geometric assumptions on $\DD$, more precise results than Corollary~\ref{cor:1} can be obtained.
We assume that $\DD$ has $C^1$ boundary and that $b$ intersects $\partial \DD$ transversally in the sense that $\langle n(x),b(x)\rangle >0$   for every $x\in\partial \DD$, where $n(x)$ is the outer normal of $\partial \DD$. Let us choose $L^j_\pm$ small enough to ensure $\overline{\mathfrak{R}}\subset \DD$ and recall~\eqref{eq:t_D}.

\begin{Cor}\label{prop:2}
Under the same conditions as  Theorem~\ref{thm:main} and the additional smoothness and transversality assumptions introduced in the above paragraph, we have
\begin{align*}
    \sup_{|x|\leq K(\eps)}\big|\eps^{-\beta(\alpha)}\Prob{\tau_\DD-t_\DD(X_{\tau_\mathfrak{R}})>\alpha \log\eps^{-1}+r(\eps)} - \psi (x) \big| = \smallo{\eps^p},
\end{align*}
where $\psi = \psi_{\alpha,r(0),\mathfrak{R}}$ is given in \eqref{eq:psi}.
\end{Cor}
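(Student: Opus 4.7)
I plan to reduce Corollary~\ref{prop:2} to Theorem~\ref{thm:main} by controlling the random travel-time discrepancy
\begin{equation*}
\Delta_\eps := \tau_\DD - \tau_\mathfrak{R} - t_\DD(X_{\tau_\mathfrak{R}}).
\end{equation*}
Suppose we can produce $q' > 0$ and an event $B_\eps$ on which $|\Delta_\eps| \leq \eps^{q'}$ and whose complement satisfies $\Prob{B_\eps^c} = \smallo{\eps^N}$ for every $N$. Then the functions $r_\pm(\eps) := r(\eps) \pm \eps^{q'}$ still satisfy \eqref{eq:condition_for_r(eps)} with exponent $\min(q,q') > 0$ and share the limit $r_\pm(0)=r(0)$, and on $B_\eps$ one has the two-sided inclusions
\begin{align*}
\{\tau_\mathfrak{R} > \alpha \log \eps^{-1} + r_+(\eps)\} \cap B_\eps &\subset \{\tau_\DD - t_\DD(X_{\tau_\mathfrak{R}}) > \alpha \log \eps^{-1} + r(\eps)\} \cap B_\eps \\
&\subset \{\tau_\mathfrak{R} > \alpha \log \eps^{-1} + r_-(\eps)\} \cap B_\eps.
\end{align*}
Two applications of Theorem~\ref{thm:main} to $r_\pm$ (producing the same $\psi_{\alpha, r(0), \mathfrak{R}}$ because $r_\pm(0)=r(0)$) then sandwich the probability of interest between $\psi_{\alpha, r(0), \mathfrak{R}}(x)\eps^{\beta(\alpha)} + \smallo{\eps^{\beta(\alpha)+p'}}$, uniformly over $|x| \leq K(\eps)$, yielding the claim.

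To build $B_\eps$, I would first use the strong Markov property at $\tau_\mathfrak{R}$: the continuation $Y_s := X_{\tau_\mathfrak{R}+s}$ solves \eqref{eq:SDE_X} with random initial condition $y := X_{\tau_\mathfrak{R}} \in \partial \mathfrak{R}$, so $Y_s = S^s y + \eps R_s$ for a process $R$ whose defining SDE has bounded Lipschitz coefficients. On the fixed horizon $T := 1 + \sup_{z \in \partial \mathfrak{R}} t_\DD(z)$, which is finite by compactness of $\partial \mathfrak{R}$ and transversality of $b$ to $\partial \DD$, standard Gronwall and Burkholder--Davis--Gundy estimates give a Gaussian-type tail $\Prob{\sup_{s \leq T}|R_s| > M} \leq c_1 e^{-c_2 M^2}$, uniformly over starting points $y \in \overline{\mathfrak{R}}$. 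Choosing $M = \eps^{-q''}$ for any $q'' \in (0,1/2)$ places us on an event of probability $1 - \smallo{\eps^N}$ for every $N$.

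The conversion of this trajectory bound into a bound on $\Delta_\eps$ is the main technical step. The transversality hypothesis together with compactness of $\partial \DD$ gives a uniform lower bound $\langle n(z), b(z)\rangle \geq c > 0$ on $\partial \DD$. For each $y \in \partial \mathfrak{R}$, an implicit function argument applied to the smooth signed distance $\varphi$ to $\partial \DD$ composed with $s \mapsto S^s y + \eta_s$ shows that whenever $\|\eta\|_{\infty,[0,T]}$ is below an absolute constant, the first exit time of $S^\cdot y + \eta$ from $\DD$ exists and differs from $t_\DD(y)$ by at most $C\|\eta\|_{\infty,[0,T]}$, where $C$ depends only on $c$ and on uniform bounds for $\varphi$ and $b$ on a compact neighborhood of the arcs $\{S^s y : s \in [0, t_\DD(y)]\}$, $y \in \partial \mathfrak{R}$. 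Applying this with $\eta_s = \eps R_s$ on the event $\{\sup_{s \leq T}|R_s| \leq \eps^{-q''}\}$ gives $|\Delta_\eps| \leq C \eps^{1-q''}$, so $q' := 1 - q''$ completes the construction of $B_\eps$. The principal obstacle is the uniformity of this implicit-function estimate over the family of trajectories issued from $\partial \mathfrak{R}$ and the verification that the perturbed trajectories stay inside a fixed tubular neighborhood of $\partial \DD$ at the hitting time; the remainder of the argument is routine SDE estimation and the bookkeeping outlined above.
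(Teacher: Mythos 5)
Your proposal is correct and follows the same route the paper takes, which the authors compress to a single sentence: Corollaries~\ref{cor:1} and~\ref{prop:2} are stated to follow from Theorem~\ref{thm:main}, the geometric assumptions, and Lemma~\ref{lemma:FW}. You have simply unfolded what that ``direct consequence'' entails: apply the strong Markov property at $\tau_\mathfrak{R}$, use the uniform trajectory estimate of Lemma~\ref{lemma:FW}-type to get $\sup_{s\le T}|X_{\tau_\mathfrak{R}+s}-S^s X_{\tau_\mathfrak{R}}|\le\eps^{1-q''}$ off a superpolynomially small event, convert that via compactness of $\partial\mathfrak{R}$ and transversality of $b$ to $\partial\DD$ into $|\tau_\DD-\tau_\mathfrak{R}-t_\DD(X_{\tau_\mathfrak{R}})|\le C\eps^{1-q''}$, absorb this into the shift function $r$ (which, crucially, still satisfies \eqref{eq:condition_for_r(eps)} and has the same limit $r(0)$, hence produces the same $\psi_{\alpha,r(0),\mathfrak{R}}$), and invoke Theorem~\ref{thm:main} twice for the sandwich. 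The only tiny inaccuracy is attributing the Gaussian tail for $\sup_{s\le T}|R_s|$ to Burkholder--Davis--Gundy; the exponential martingale inequality (as cited in the paper's sketch of Lemma~\ref{lemma:FW}) is what gives the $e^{-cM^2}$ decay, though a high-moment BDG bound would also furnish the superpolynomial smallness actually needed.
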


\section{Proof of main results} \label{sec:proof-main}

Corollaries  \ref{cor:1} and~\ref{prop:2} are direct consequences of Theorem~\ref{thm:main}, our geometric assumptions, and the following standard FW large deviation estimate
which implies  that, upon exiting $\mathfrak{R}$, the process $X$ closely follows a deterministic trajectory:
\begin{Lemma}\label{lemma:FW}
    For each fixed time $T>0$, and each $\upsilon \in[0,1)$, there are $C,c$>0 such that the following holds uniformly over all initial points $X_0=\mathbf{x}$:
    \begin{align*}
        \Prob{\sup_{0\leq t\leq T}|X_t-S^t\mathbf{x}|>\eps^{\upsilon}}\leq C\exp( -c\eps^{2(\upsilon-1)}).
    \end{align*} 
\end{Lemma}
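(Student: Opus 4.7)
The proof is a standard pathwise comparison plus a martingale tail bound. The plan is to first control $\Delta_t := X_t - S^t \mathbf{x}$ deterministically in terms of the stochastic integral $M_t := \int_0^t \sigma(X_s)\,dW_s$, and then apply a Bernstein--type concentration estimate.

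First I would subtract \eqref{eq:ODE} from \eqref{eq:SDE_X} to get
\[ \Delta_t = \int_0^t [b(X_s) - b(S^s \mathbf{x})]\,ds + \eps M_t. \]
Since $b$ has bounded first derivative by the standing assumptions (after the truncation of $b$ outside $\DD$), it is globally Lipschitz with some constant $L$ depending only on $b$. This gives
\[ |\Delta_t| \leq L\int_0^t |\Delta_s|\,ds + \eps \sup_{0\leq s\leq T}|M_s|, \qquad t\in[0,T], \]
and Gronwall's inequality yields the pathwise bound
\[ \sup_{0\leq t\leq T}|\Delta_t| \leq \eps\, e^{LT} \sup_{0\leq s\leq T}|M_s|. \]

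Next I would estimate $\sup_{0\leq s\leq T}|M_s|$ by hand. Since $\sigma$ is bounded (say $|\sigma|\le K$), each component of $M$ is a continuous martingale whose quadratic variation grows at most linearly, bounded by $K^2 t$. The standard exponential martingale inequality (Bernstein for continuous martingales, applied coordinatewise together with a union bound) gives
\[ \Prob{\sup_{0\leq s\leq T}|M_s| > R} \leq C\, e^{-cR^2/T}, \]
for constants $c,C$ depending only on $d$ and $K$. Choosing $R = \eps^{\upsilon-1}e^{-LT}$ and combining with the Gronwall bound above, one obtains
\[ \Prob{\sup_{0\leq t\leq T}|X_t - S^t\mathbf{x}| > \eps^\upsilon} \leq \Prob{\sup_{0\leq s\leq T}|M_s| > \eps^{\upsilon-1}e^{-LT}} \leq C\, e^{-c' \eps^{2(\upsilon-1)}}, \]
with $c' = c\, e^{-2LT}/T$. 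Uniformity in $\mathbf{x}$ is automatic since neither $L$ nor $K$ depends on the initial condition.

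There is no real obstacle here, only some bookkeeping. The only point worth flagging is that one must invoke an \emph{exponential} martingale inequality rather than, say, Doob's $L^2$ inequality, in order to obtain the exponential rate $\exp(-c\eps^{2(\upsilon-1)})$ demanded by the statement; and one relies crucially on the preliminary adjustment that made $b$ and $\sigma$ globally Lipschitz/bounded on $\R^d$, so that the Gronwall argument works along every sample path regardless of whether $X_s$ stays near $\mathbf{x}$.
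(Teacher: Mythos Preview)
Your proposal is correct and matches the paper's own approach exactly: the paper does not give a detailed proof but simply indicates that the lemma follows from Lipschitzness of $b$, boundedness of $\sigma$, Gronwall's inequality, and the exponential martingale inequality, which is precisely the chain of reasoning you wrote out.
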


This lemma can be proved using Lipschitzness of the vector field $b$, boundedness of~$\sigma$, Gronwall's inequality, and the exponential martingale inequality (see \cite[Problem 12.10]{Bass}). The key idea can be seen at the very beginning of~\cite[Chapter~3]{FW}. 
\bigskip

The rest of this section is our  proof  of Theorem~\ref{thm:main}.

From now on we will often use  Einstein's convention of summation over matching upper and lower indices.
Let us introduce a new process $Y_t = f(X_t)$, which by It\^o's formula and \eqref{eq:conjugation} satisfies
\begin{equation}
 \label{eq:Y_SDE_before_Duhamel}
dY^i_t = \lambda^i Y^i_t dt + \eps F^i_j(Y_t) dW^j_t + \eps^2G^i(Y_t)dt,
\end{equation}
where
\begin{align*}
F_j^i(y)&=\partial_k f^i(f^{-1}(y))\sigma^k_j(f^{-1}(y)),\quad y\in f(O),\\
G^i(y)&=\frac{1}{2}\partial_{jk}^2f^i(f^{-1}(y))\langle \sigma^j(f^{-1}(y)),\sigma^k(f^{-1}(y)) \rangle,\quad y\in f(O),
\end{align*}
$\langle \cdot, \cdot \rangle$ denotes the inner product, and we set $\lambda^i = \lambda_i$ to avoid the summation
over~$i$. Note that $F,G\in C^3(f(O))$  and, due to~\eqref{eq:diffeo-norm}, we have \begin{align}\label{eq:F(0)=simga(0)}
        F(0)= \sigma(0).
    \end{align}

We shift our focus from the process $X_t$ with $X_0=\eps x$ to $Y_t=f(X_t)$ with $Y_0=\eps y = f(\eps x)$ by the following considerations. Due to~\eqref{eq:diffeo-norm}, there is a constant $C_f$ such that $|z|\leq C_f|f(z)|$ for all $z\in O$. Set $K'(\eps)=C_f^{-1}K(\eps)$. 
Therefore, for $\eps $ small with $X_0=\eps x \in O$, we have that if $|y|\leq K'(\eps)$, then $|x|\leq K(\eps)$.
Note that due to $Y_t=f(X_t)$  the exit time~$\tau_\mathfrak{R'}$ defined in \eqref{eq:def_tau_A} in terms of the process $X$ can be rewritten as
\begin{equation}
\label{eq:tau}
\tau =\inf\{t>0: Y_t \not\in \mathfrak{R}'\},
\end{equation}
where
$\mathfrak{R}'=\prod_{j=1}^d [L_-^j, L_+^j]=f(\mathfrak{R})$ (see Figure \ref{figure:1}). Hence, Theorem~\ref{thm:main} follows from the following result.
\begin{Prop}\label{Prop:Y} Suppose $Y_t$ solves \eqref{eq:Y_SDE_before_Duhamel} with $Y_0=\eps y$ and let $r$ satisfy\eqref{eq:condition_for_r(eps)}.
Then there is a constant $L_0\geq 0$ such that for each $\alpha\geq 0$ and each $K'(\eps)$ satisfying, with $i=i(\alpha)$,
\begin{align}\label{eq:condition_for_K'}
    \begin{cases}
            \lim_{\eps \to 0}\eps^{1-\lambda_{i}\alpha} K'(\eps) = 0, &\quad\text{if } i\leq d\text{ and }\alpha<\tfrac{1}{\lambda_i},\\
            \lim_{\eps \to 0}\eps^{1-\lambda_{i+1}\alpha} K'(\eps) = 0 , &\quad\text{if } i\leq d\text{ and }\alpha=\tfrac{1}{\lambda_i},\\
            \lim_{\eps \to 0}\eps^{1-c} K'(\eps) = 0 \text{ for some $c\in (0,1)$}, &\quad\text{if } i = d+1,\\
        \end{cases}
\end{align}
we have, for any set of the form  $\mathfrak{R}'=\prod_{j=1}^d [L_-^j, L_+^j]\subset O$ with $ 0 \in \mathring{\mathfrak{R}}'$ and $|L_\pm^j|\leq L_0$ for all $j=1,\ldots,d$, 
\begin{align}\label{eq:main_Y}
    \sup_{|y|\leq K'(\eps)}\big|\eps^{-\beta(\alpha)}\Prob{\tau>\alpha \log\eps^{-1}+r(\eps)} - \psi\big(\tfrac{f^{-1}(\eps y)}{\eps} \big) \big| = \smallo{\eps^p},
\end{align}
for some $p=p(\alpha,q,\lambda, \sigma, f) \in (0,1)$.

\end{Prop}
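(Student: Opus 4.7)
The plan is to view \eqref{eq:Y_SDE_before_Duhamel} as a small perturbation of its linearization at $0$ and to reduce the tail estimate to a Gaussian density computation. First I would apply Duhamel's formula to \eqref{eq:Y_SDE_before_Duhamel} and write, coordinatewise,
\begin{align*}
Y^i_t = e^{\lambda_i t}\bigl(\eps y^i + \eps M^i_t + \eps^2 A^i_t\bigr),\qquad M^i_t=\int_0^t e^{-\lambda_i s}F^i_k(Y_s)\,dW^k_s,
\end{align*}
with $A^i_t$ the corresponding drift integral. In the truly linear case, where $F(Y_s)$ is replaced by $F(0)=\sigma(0)$ and $G\equiv 0$, the martingale $M_t$ is Gaussian and converges a.s.\ as $t\to\infty$ to a limit with covariance matrix $\Czero$. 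Writing $t_f=\alpha\log\eps^{-1}+r(\eps)$, the event $\{\tau>t_f\}$ becomes, for each coordinate $j$ and each $s\leq t_f$, the constraint $y^j+M^j_s\in [L^j_-,L^j_+]\eps^{-1}e^{-\lambda_j s}$. For $j<i(\alpha)$ the binding constraint is at $s=t_f$, where the window has width of order $\eps^{\lambda_j\alpha-1}(L^j_+-L^j_-)e^{-\lambda_j r(0)}$ centered at $-y^j$; for $j=i(\alpha)$ with equality $\alpha=1/\lambda_i$ the window has order $1$; for $j>i(\alpha)$ it is slack. Multiplying widths and reading off the Gaussian density then produces both the prefactor $\eps^{\beta(\alpha)}$ and the expression~\eqref{eq:psi}; this is the heuristic already present in~\cite{mikami1995}.

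The proof then reduces to showing that the joint density of the relevant marginals of $M_{t_f}$ is close, to relative precision $\eps^p$, to the Gaussian density predicted by the linearized SDE. Because $M$ is no longer Gaussian once $F$ is nonlinear, I would use Malliavin calculus to control the discrepancy. The estimates from \cite{bally2014} provide exactly this, but only on time intervals of length $\theta\log\eps^{-1}$ with $\theta>0$ small (the constant $\theta$ essentially measuring how much exponential stretching the Malliavin derivatives can absorb). This forces an iteration, as set up in Sections~\ref{section:approximations} and~\ref{section:density_estimate}: partition $[0,t_f]$ into $N=O(1/\theta)$ blocks of length $\theta\log\eps^{-1}$; on each block, compare the density of $Y$ at the right endpoint to the Gaussian density coming from the frozen linearization at~$0$; and concatenate across blocks by the strong Markov property. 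At the terminal block, the narrow windows for $j<i(\alpha)$ and the integration over $z^{\geq i(\alpha)}$ produce the formula~\eqref{eq:psi}, with argument $f^{-1}(\eps y)/\eps\to y$ by~\eqref{eq:diffeo-norm}.

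The main obstacle I anticipate is the quantitative control of the Malliavin covariance of $Y$ through this iteration. Because $Y$ lives in a neighborhood of $0$ of $\eps$-small size, the Malliavin matrix degenerates polynomially in $\eps$, and its ellipticity must be tracked precisely against the exponential stretching provided by the flow, so that the per-block density bound survives with a favorable positive power of $\eps$ and the accumulated $O(1/\theta)$ errors remain $\smallo{\eps^p}$ for suitable $p,\theta>0$. Two secondary points must be handled in parallel. First, one must confine $Y$ to the linearization neighborhood throughout $[0,t_f]$, for which Lemma~\ref{lemma:FW} furnishes an error that is exponentially small in a negative power of $\eps$ and hence negligible against $\eps^{\beta(\alpha)}$. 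Second, the deterministic contribution $e^{\lambda_j t_f}\eps y^j$ must not by itself push $Y$ out of $\mathfrak{R}'$ at any intermediate time; the three-case admissibility condition~\eqref{eq:condition_for_K'} on $K'(\eps)$ is designed precisely to keep this contribution within the Gaussian fluctuation scale in each of the regimes $\alpha<1/\lambda_i$, $\alpha=1/\lambda_i$, and $i=d+1$.
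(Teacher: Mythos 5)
Your proposal tracks the paper's strategy faithfully: Duhamel passage to the rescaled process $U$, reduction of the exit event to a terminal-time window (Lemma~\ref{Lemma:rough_approximation}, which uses the exit-time identity $L=\eps e^{\lj\tj}|y^j+U^j_\tj|$ and a martingale estimate to replace the all-time constraint by the terminal one), a Bally--Caramellino density comparison on blocks of length $\theta\log\eps^{-1}$, and iteration via the strong Markov property (Lemma~\ref{lemma:iterative_est}), with the explicit integral over $z^{\geq i}$ producing $\psi$. One technical correction to the ``main obstacle'' you flag: the paper does not track a polynomially degenerating Malliavin covariance against the flow's stretching --- the Duhamel rescaling $U^j_t=\eps^{-1}e^{-\lj t}Y^j_t-y^j$ already absorbs the exponential factors, and Section~\ref{section:density_estimate} proves the uniform non-degeneracy bound $\EE|\det\sigma_{U_T}|^{-p}\le C_p$ (see~\eqref{eq:goal_est_det_malliavin_covariance}); the smallness constraint $2\lone\theta\le 1$ comes instead from keeping the $\pbracket{\cdot}$-norms of the Malliavin derivatives of $U$ bounded in the block-length regime, not from repairing a degenerate covariance.
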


\bigskip

Let us describe the plan to prove Proposition \ref{Prop:Y}. The proof can be split into two main steps. 

The first step is to show that $\Prob{\tau>\alpha\log\eps^{-1}+r(\eps)}$ can be approximated by $\Prob{y+U^\eps_{T_0(\eps)}\in A_\eps}$, where $T_0(\eps)$ is a deterministic time, $U^\eps$ is a Gaussian-like process, and $A_\eps\subset \R^d$ is a deterministic set. Namely, the probability of the exit event can be approximated by integrating over $A_\eps$ with respect to a Gaussian-like density. This result is summarized in Lemma \ref{Lemma:rough_approximation} below. The method is to find an explicit expression of $\tau$ by using the fact that $\mathfrak{R}'$, the set to exit, is a box.

In the second step, we show that $\Prob{y+U^\eps_{T_0(\eps)}\in A_\eps}$ is approximately $\Prob{y+\mathcal{Z}\in A_\eps}$ for a centered Gaussian vector $\mathcal{Z}$ with covariance \eqref{eq:def_C_0_Czero}. This is the content of Lemma~\ref{lemma:iteration}. To show this, we apply tools from the Malliavin Calculus to deduce that the density of $U^\eps_{T(\eps)}$ is close to that of $Z_{T(\eps)}$. Here $T(\eps)$ is another deterministic time, which can be much smaller than $T_0(\eps)$, and $Z$ is a Gaussian process independent of~$\eps$ with $Z_\infty $ equal to $\mathcal{Z}$ in distribution. This is done in Lemma \ref{Lemma:density_est}.  
We use an iteration scheme to  extend the Gaussian approximation to the larger time $T_0(\eps)$.

To conclude the proof of this proposition, we estimate the discrepancy between properly scaled $\Prob{y+\mathcal{Z}\in A_\eps}$ and $\psi(f^{-1}(\eps y)/\eps)$, as $\eps \to 0$. This is done in this section after stating Lemma \ref{Lemma:rough_approximation} and Lemma \ref{lemma:iteration}.

\bigskip

To state the two key lemmas, we start by introducing some useful objects.

Since $F(0)=\sigma(0)$ is $d\times n$ with full rank and $F$ is continuous, we can choose $L_0$ so small that there is $c_0>0$ such that $\min_{|u|=1,u\in\R^d}|u^\intercal F(x)|^2\geq c_0$ for all $x\in [-L_0,L_0]^d$, where $\intercal$ stands for matrix transpose. Since we only care about exiting from a subset of $[-L_0,L_0]^d$, we modify $F,G$ outside $[-L_0,L_0]^d$ so that
\begin{align}\label{eq:modified_F}
\begin{split}
    \min_{|u|=1,u\in\R^d}|u^\intercal F(x)|^2\geq c_0, \text{ for all }x\in \R^d; \\
    F, G \text{ and their derivatives are bounded}.
    \end{split}
\end{align}

From now on, we fix this $L_0$ and $F,G$ modified according to~\eqref{eq:modified_F}. By Duhamel's principle, we can solve (\ref{eq:Y_SDE_before_Duhamel}) with $Y_0 = \eps y$ by 
\begin{equation}
\label{eq:Y_after_Duhamel}
Y^j_t = \eps e^{\lj t}(y^j+ U^j_t ),
\end{equation}
where
\begin{equation}
\label{eq:U}
U^j_t=M^j_t + \eps V^j_t
\end{equation}
and
\begin{align}
\label{eq:M}
M^j_t &= \int_0^t e^{-\lj s}F^j_l(Y_s)dW^l_s,\\
\label{eq:V}
V^j_t &= \int^t_0e^{-\lj s}G^j(Y_s)ds.
\end{align}
We emphasize that $M_t$, $V_t$, and $U_t$ depend on $y$ and $\eps$.

To make the notation less heavy we will assume that
\begin{align}\label{eq:assumption_on_R=[-L,L]^d}
    \mathfrak{R}'= [-L,L]^d \quad \text{for some } L\in(0,L_0),
\end{align}
as it is easy to see that for general rectangles, all our arguments still hold.

\begin{Lemma}\label{Lemma:rough_approximation}
    Let 
\begin{equation}    
    T_0 =T_0(\eps)=\alpha \log \eps^{-1}+r(\eps).
\label{eq:T_0}
\end{equation}    
For each $\nu>0$, there are $\eps_0>0$ and $\gj$, $j=1,\ldots,d,$ 
satisfying    
\begin{align}\label{eq:condition_gamma_j}
    (\lj \alpha -1)\vee0<\gj < \lj \alpha, \quad j=1,\ldots,d,
\end{align}    
    such that the following holds for all $y$ satisfying $|y|\leq K'(\eps)$ and all $\eps \leq \eps_0$:
    \begin{align*}
    \begin{split}
        -\eps^\nu+\Prob{y+U_{T_0}\in A_-}\leq \Prob{\tau >\alpha \log\eps^{-1}+r(\eps)}\leq \Prob{y+U_{T_0}\in A_+}+\eps^\nu,
    \end{split}
\end{align*}
where 
\begin{align}\label{eq:def_A_pm}
    \begin{split}
        A_\pm = \{ x\in \R^d:\ |x^j|<\eps^{\lj \alpha -1}Le^{-\lj r(\eps)}\pm \eps^\gj, \quad j=1,\ldots,d \}.
    \end{split}
\end{align}
\end{Lemma}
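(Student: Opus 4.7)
The Duhamel representation~\eqref{eq:Y_after_Duhamel} recasts the confinement condition $Y_t\in\mathfrak{R}'=[-L,L]^d$ as $|y^j+U^j_t|<\eps^{-1}e^{-\lj t}L$ for every coordinate $j$ and every $t$. The right-hand side is decreasing in $t$ and attains its minimum $\eps^{\lj\alpha-1}Le^{-\lj r(\eps)}$ precisely at $t=T_0$, so the tightest constraint is the one at $t=T_0$, and one expects $\Prob{\tau>T_0}$ to be close to $\Prob{y+U_{T_0}\in A_0}$, where $A_0$ is obtained from~\eqref{eq:def_A_pm} by setting $\eps^{\gj}=0$. The $\pm\eps^{\gj}$ margins in $A_\pm$ provide the buffer needed to make this heuristic rigorous. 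The upper bound is in fact immediate: $\{\tau>T_0\}\subset\{Y_{T_0}\in\ovl{\mathfrak{R}'}\}$ becomes, after dividing by $\eps e^{\lj T_0}$, the inclusion $\{y+U_{T_0}\in\ovl{A_0}\}\subset\{y+U_{T_0}\in A_+\}$, so $\Prob{\tau>T_0}\leq\Prob{y+U_{T_0}\in A_+}$ holds with no error term whatsoever.

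\textbf{Lower bound via a non-return estimate.}
Writing
\[\Prob{y+U_{T_0}\in A_-}\leq\Prob{\tau>T_0}+\Prob{y+U_{T_0}\in A_-,\ \tau\leq T_0},\]
the task is to bound the second term by $\eps^\nu$. In $Y$-coordinates, $A_-$ corresponds to $\{|Y^j_{T_0}|<L-\delta_j,\ \forall j\}$ with $\delta_j=\eps^{1-\lj\alpha+\gj}e^{\lj r(\eps)}$; the stipulated range $\gj\in((\lj\alpha-1)\vee 0,\,\lj\alpha)$ ensures $\delta_j>0$, $\delta_j\to 0$, and $\delta_j/\eps^2\to\infty$ at a polynomial rate in $\eps^{-1}$. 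By the strong Markov property at the stopping time $\tau$ and a union bound over the $2d$ faces of $\mathfrak{R}'$, it suffices to estimate, uniformly over $\tau\leq T_0$ on the face $|Y^{j^*}_\tau|=L$, the conditional probability of $|Y^{j^*}_{T_0}|<L-\delta_{j^*}$. Applying Duhamel to~\eqref{eq:Y_SDE_before_Duhamel} on $[\tau,T_0]$ with $s:=T_0-\tau$ yields
\[Y^{j^*}_{T_0}=Y^{j^*}_\tau e^{\lambda_{j^*}s}+\eps N+\eps^2 I,\]
where $N=\int_\tau^{T_0}e^{\lambda_{j^*}(T_0-u)}F^{j^*}_l(Y_u)\,dW^l_u$ has quadratic variation at most $C(e^{2\lambda_{j^*}s}-1)/(2\lambda_{j^*})$ and $|\eps^2 I|\leq C\eps^2 e^{\lambda_{j^*}s}$, both by~\eqref{eq:modified_F}. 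Since $|Y^{j^*}_\tau|e^{\lambda_{j^*}s}\geq L$, the event $|Y^{j^*}_{T_0}|<L-\delta_{j^*}$ forces
\[|\eps N|\geq L(e^{\lambda_{j^*}s}-1)+\delta_{j^*}-O(\eps^2 e^{\lambda_{j^*}s}).\]
The Dambis--Dubins--Schwarz time change (equivalently, the exponential martingale inequality) then gives a Gaussian-type bound of the form $\exp\!\big(-c(L(e^{\lambda_{j^*}s}-1)+\delta_{j^*})^2/\big(\eps^2(e^{2\lambda_{j^*}s}-1)\big)\big)$; substituting $u=e^{\lambda_{j^*}s}-1\geq 0$ and minimizing over $u$ (the minimum is attained near $u\sim\delta_{j^*}/L$) yields exponent $\gtrsim \delta_{j^*}/\eps^2\to\infty$. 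The resulting bound is super-polynomial in $\eps^{-1}$, hence $\leq\eps^\nu$ for every $\nu>0$ once $\eps\leq\eps_0(\nu)$.

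\textbf{Main obstacle.}
The technical heart is the uniform-in-$\tau$ character of the non-return estimate. Both the martingale variance $\sim e^{2\lambda_{j^*}s}$ and the deterministic threshold $L(e^{\lambda_{j^*}s}-1)+\delta_{j^*}$ grow exponentially in $s=T_0-\tau$, so naive estimates applied in either the small-$s$ regime or the regime $s\asymp T_0$ fail to deliver a single clean super-polynomial bound. The explicit substitution $u=e^{\lambda_{j^*}s}-1$ followed by an elementary one-variable minimization is what localizes the exponent to its worst-case value $c\delta_{j^*}/\eps^2$ uniformly across all admissible $\tau$. The margin $\delta_{j^*}$ is determined by the free parameter $\gj$ whose permitted range $((\lj\alpha-1)\vee 0,\,\lj\alpha)$ is exactly what forces $\delta_{j^*}/\eps^2$ to diverge (securing the super-polynomial bound) while keeping $\delta_{j^*}\to 0$ (matching the asymptotics of $A_\pm$ to that of $A_0$).
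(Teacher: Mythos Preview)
Your argument is correct. The upper bound you give is in fact cleaner than the paper's: the authors deduce it from the approximation $U^j_{\tau_j\vee T_0}\approx U^j_{T_0}$ (which costs them an $\eps^\nu$ error they do not actually need there), whereas your observation $\{\tau>T_0\}\subset\{Y_{T_0}\in\ovl{\mathfrak{R}'}\}=\{y+U_{T_0}\in\ovl{A_0}\}\subset\{y+U_{T_0}\in A_+\}$ gives the bound with no error term at all.

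For the lower bound the two proofs are the same in spirit (strong Markov at the exit time, then a non-return estimate via the exponential martingale inequality), but the implementations differ. The paper restarts at $\tau_j$ and, crucially, stays in the rescaled coordinates: from $|Y^j_t|\geq L-\eps|U^j_t|$ it reduces the non-return event to $\sup_{t}|U^j_t|\geq \eps^{-\lj\alpha+\gj}e^{\lj r(\eps)}$, where the martingale part of $U^j$ has quadratic variation bounded by a constant \emph{independent of the elapsed time}. This removes the $s$-dependence entirely, so no minimization is needed and the Gaussian exponent is directly $c\,\eps^{2(\gj-\lj\alpha)}$. Your route---Duhamel on $[\tau,T_0]$ in the original $Y$-coordinates, then the substitution $u=e^{\lambda_{j^*}s}-1$ and a calculus minimization---is slightly more laborious but yields the exponent $c\,\delta_{j^*}/\eps^2=c\,\eps^{\gj-\lj\alpha-1}$, which happens to be even larger. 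Either way the bound is super-polynomial in $\eps^{-1}$, so both arguments close. The paper's device of passing to $U$ is worth noting: it is a small trick that replaces your optimization step by a one-line estimate.
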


\begin{Lemma}\label{lemma:iteration}  Let $T_0$ be defined 
 in~\eqref{eq:T_0}  and $\mathcal{Z}$ be a centered Gaussian vector with covariance matrix given by \eqref{eq:def_C_0_Czero}. Then for each $\upsilon \in(0,1)$, there are constants $\eps_0, \delta>0$ such that, for $\eps\in(0,\eps_0]$
 \begin{align*}
    \sup_{|y|\leq \eps^{\upsilon-1}} \big|\Prob{y+U_{T_0}\in A_\pm}-\Prob{y+\mathcal{Z}\in A_\pm}\big| = \smallo{\eps^{\beta(\alpha)+\delta}}.
 \end{align*}
\end{Lemma}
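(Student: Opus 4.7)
The plan is to interpolate between $U_{T_0}$ and $\mathcal{Z}$ via an intermediate Gaussian reference. Let
\[
Z^j_t=\int_0^t e^{-\lambda_j s}\sigma^j_l(0)\,dW^l_s,
\]
the analog of $U$ obtained by freezing the coefficients at the origin (using $F(0)=\sigma(0)$ from~\eqref{eq:F(0)=simga(0)} and dropping the $O(\eps)$ drift $V$). Then $Z$ is a centered Gaussian martingale whose covariance at time $t$ converges to $\Czero$ as $t\to\infty$, so $Z_\infty\stackrel{d}{=}\mathcal{Z}$. The triangle inequality splits the target into
\[
(\mathrm{I})=\big|\Prob{y+U_{T_0}\in A_\pm}-\Prob{y+Z_{T_0}\in A_\pm}\big|,\qquad
(\mathrm{II})=\big|\Prob{y+Z_{T_0}\in A_\pm}-\Prob{y+\mathcal{Z}\in A_\pm}\big|.
\]

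For $(\mathrm{II})$, I would couple $\mathcal{Z}$ to $Z_\infty$ so that $\mathcal{Z}-Z_{T_0}$ is realized as the increment $\int_{T_0}^\infty e^{-\lambda_j s}\sigma^j_l(0)\,dW^l_s$, Gaussian, independent of $Z_{T_0}$, with covariance of order $e^{-2\lambda_d T_0}=\eps^{2\lambda_d\alpha}$. The geometry of $A_\pm$---slabs of width $\asymp\eps^{\lambda_j\alpha-1}$ for $j<i(\alpha)$ and asymptotically unbounded for $j\ge i(\alpha)$, which combine to $\Prob{y+\mathcal{Z}\in A_\pm}\asymp\eps^{\beta(\alpha)}$---together with standard boundary-shift estimates for Gaussian measures of rectangles yields $(\mathrm{II})\le C\eps^{\beta(\alpha)+c}$ for some $c>0$, easily $o(\eps^{\beta(\alpha)+\delta})$ for $\delta<c$.

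The substantive work is $(\mathrm{I})$. Lemma~\ref{Lemma:density_est} provides a density comparison between $y+U_{T(\eps)}$ and $y+Z_{T(\eps)}$ on a single short window $T(\eps)=\theta\log\eps^{-1}$ with $\theta>0$ small, at polynomial cost $\eps^\rho$, $\rho>0$. To lift this to $T_0=\alpha\log\eps^{-1}+r(\eps)$, I would partition $[0,T_0]$ into $N=\lceil\alpha/\theta\rceil$ windows and telescope
\[
\Prob{y+U_{T_0}\in A_\pm}-\Prob{y+Z_{T_0}\in A_\pm}=\sum_{k=0}^{N-1}\Delta_k,
\]
where $\Delta_k$ compares the hybrid driven by the true SDE on $[0,(k+1)T(\eps)]$ and by the linearized SDE afterwards to the hybrid switching at time $kT(\eps)$. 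The Markov property of $Y$ reduces each $\Delta_k$ to a single-window density discrepancy, controlled by Lemma~\ref{Lemma:density_est} applied at the random state $Y_{kT(\eps)}$; the resulting error is then propagated to time $T_0$ through the linear flow, which contracts by $e^{-\lambda_j(T_0-(k+1)T(\eps))}$ in coordinate $j$ and is especially favorable in the narrow directions $j<i(\alpha)$. Lemma~\ref{lemma:FW} is used to discard the event that $Y_{kT(\eps)}$ leaves the region of validity of Lemma~\ref{Lemma:density_est}.

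The principal obstacle is to calibrate $\rho$ and the window length $\theta$ so that after $N=O(\log\eps^{-1})$ iterations the accumulated error still beats $\eps^{\beta(\alpha)+\delta}$; this forces the single-window estimate to carry a gain strictly better than $\eps^{\beta(\alpha)+\delta}$ by at least a logarithmic factor, with $\theta$ chosen small after $\delta$ is fixed. A secondary subtlety is uniformity over initial data $|y|\le\eps^{\upsilon-1}$, which may be large: $(\mathrm{II})$ remains uniform by smoothness of Gaussian densities, whereas $(\mathrm{I})$ requires that the random starting states $Y_{kT(\eps)}$ remain in the regime where Lemma~\ref{Lemma:density_est} applies, which is where the admissibility condition on $K'$ interacts with the choice of $L_0$.
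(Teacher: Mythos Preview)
Your strategy matches the paper's: decompose via $Z_{T_0}$, handle $(\mathrm{II})$ by direct Gaussian comparison (the paper packages this as part~(2) of Lemma~\ref{Lemma:density_est}), and handle $(\mathrm{I})$ by telescoping over windows of length $\theta\log\eps^{-1}$ using the Markov property and the single-window density bound. The paper formalizes the telescoping as an induction (Lemma~\ref{lemma:iterative_est}) carrying an auxiliary shift parameter $w$ that absorbs the future Gaussian increment, which is essentially your hybrid scheme.

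Two corrections are in order. First, you correctly set $N=\lceil\alpha/\theta\rceil$, a \emph{fixed} integer independent of $\eps$, but then identify the ``principal obstacle'' as error accumulation over ``$N=O(\log\eps^{-1})$ iterations''. This is a confusion: the window \emph{length} is $\theta\log\eps^{-1}$, so the number of windows is $\alpha/\theta=O(1)$. The accumulated error is a constant multiple of the single-window error, and each window already yields $\eps^{\beta(\alpha)+\delta'}$ --- the factor $\eps^{\beta(\alpha)}$ coming from integrating the Gaussian-weighted density discrepancy over $A_\pm$, whose narrow coordinates $j<i(\alpha)$ have combined width $\asymp\eps^{\beta(\alpha)}$. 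No logarithmic calibration is needed, and $\theta$ is fixed once by Lemma~\ref{Lemma:density_est}, not chosen after $\delta$.

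Second, Lemma~\ref{lemma:FW} is not the right localization tool for the intermediate state $Y_{t_{k-1}}$. On the event of interest, the rescaled state $z(u)=e^{\lambda t_{k-1}}(y+U_{t_{k-1}})$ lands (up to a small shift) in $e^{\lambda t_{k-1}}A_\pm$, whose coordinates satisfy $|z^j|\lesssim\eps^{\upsilon'-1}$ for a suitable $\upsilon'$; the paper uses this containment directly, so that the polynomial prefactor $\pp(\eps^{1-\upsilon'}|z(u)|)$ in Lemma~\ref{Lemma:density_est} stays bounded and is absorbed by the Gaussian tail $e^{-c|x|^2}$. A large-deviation estimate on the trajectory is neither needed nor sufficient here, since $Y_{t_{k-1}}$ is not close to a deterministic path.
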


\medskip

These two lemmas are proved in Section \ref{section:approximations}.

\begin{proof}[Proof of Proposition \ref{Prop:Y}]

Let 
\begin{align}\label{eq:def_psi_eps}
        \psi_\eps(y) &= \eps^{-\beta(\alpha)}\Prob{y+\mathcal{Z} \in A_\pm}=\frac{\eps^{-\beta(\alpha)}}{\sqrt{(2\pi)^d\det \Czero}}\int_{A_\pm-y}e^{-\frac{1}{2}x^\intercal \Czero^{-1}x}dx.
\end{align}
Here and below, we use the same argument to treat the cases of $A_+$ and $A_-$ and often omit
the dependence on the choice of $+$ or $-$.

Since we have assumed \eqref{eq:assumption_on_R=[-L,L]^d}, we have
\begin{align*}
    \psi(y)= 
    \begin{cases}
        \frac{\prod_{j< i}2Le^{-\lj r(0)}}{\sqrt{(2\pi)^d \det \Czero}}\int_{\R^{d-i+1}}e^{-\frac{1}{2}x^\intercal\Czero^{-1}x}\big|_{x^{<i}=-y^{<i}}dx^{\geq i},\quad \text{ if }\alpha<\frac{1}{\li},\\
        \frac{\prod_{j< i}2Le^{-\lj r(0)}}{\sqrt{(2\pi)^d \det \Czero}}\int_{(e^{-\li r(0)}[-L,L]-y^i)\times\R^{d-i}}e^{-\frac{1}{2}x^\intercal\Czero^{-1}x}\big|_{x^{<i}=-y^{<i}}dx^{\geq i}, \text{ if }\alpha=\frac{1}{\li}.
    \end{cases}
\end{align*}

The key estimate is the following, to be proved later:
\begin{align}\label{eq:key_psi_eps_and_psi}
\sup_{|y|\leq K'(\eps)}\big|\psi_\eps(y)-\psi\big(\tfrac{f^{-1}(\eps y)}{\eps} \big)\big|\leq \smallo{\eps^q}\text{, for some }q\in (0,1).
\end{align} 

By \eqref{eq:def_psi_eps}, \eqref{eq:key_psi_eps_and_psi}, Lemma \ref{Lemma:rough_approximation} and Lemma \ref{lemma:iteration}, we obtain \eqref{eq:main_Y}. By the discussion above \eqref{eq:main_Y}, the desired result \eqref{eq:main} is attained.
\end{proof}
 
\begin{proof}[Proof of \eqref{eq:key_psi_eps_and_psi}]
We remind the notations introduced in \eqref{eq:projection_notation1}.
Let $\Pi^i$, $\Pi^{>i}$ and $\Pi^{\geq i}$ be projection maps  defined by $\Pi^ix=x^i$, $\Pi^{>i}x = x^{>i}$ and $\Pi^{\geq i}x=x^{\geq i}$. For any set $E\subset \R^d$, we define
         \begin{align}\label{eq:projection_notation2}
        \begin{split}
            &E^{i}=\Pi^iE,\quad E^{> i}=\Pi^{> i}E,\quad 
            E^{\geq i}=\Pi^{\geq i}E.
        \end{split}
    \end{align}
In addition, for a fixed $y\in \R^d$ in \eqref{eq:key_psi_eps_and_psi}, and each $x\in \R^d$, let $\tilde{x}=(-y^{<i},x^{\geq i})\in \R^d $.

Since $\sigma(0)$ has full rank, by the definition of $\Czero$ in \eqref{eq:def_C_0_Czero}, there is $c>0$ such that
\begin{align}\label{eq:boundedness_of_gaussian_density_Czero}
    e^{-\frac{1}{2}x^\intercal\Czero^{-1} x}\leq e^{-c|x|^2}.
\end{align}
Here and below the value of the constant $C$ may vary from instance to instance.
To estimate $|\psi_\eps(y)-\psi\big(\tfrac{f^{-1}(\eps y)}{\eps} \big)|$, we need the following intermediate quantities:
\begin{align*}\begin{split}
        &\mathtt{I}= \frac{\eps^{-\beta(\alpha)}}{\sqrt{(2\pi)^d\det \Czero}}\int_{A_\pm-y}e^{-\frac{1}{2}\tilde{x}^\intercal \Czero^{-1}\tilde{x}}dx, \\
        &\mathtt{II}=\frac{\prod_{j<i}2Le^{-\lj r(0)}}{\sqrt{(2\pi)^d \det \Czero}}\int_{(A_\pm-y)^{\geq i}} e^{-\frac{1}{2}\tilde{x}^\intercal \Czero^{-1}\tilde{x}}dx^{\geq i}.\\
    \end{split}
\end{align*}
Let us write 
\begin{align}\label{eq:split_difference_between_psi_eps-psi}
    \begin{split}
        &\quad\quad\big|\psi_\eps(y)-\psi\big(\tfrac{f^{-1}(\eps y)}{\eps} \big)\big| \leq \big|\psi_\eps(y)-\mathtt{I}\big|+\big|\mathtt{I}-\mathtt{II}\big|+\big|\mathtt{II}-\psi(y)\big|+\big|\psi(y)-\psi\big(\tfrac{f^{-1}(\eps y)}{\eps} \big)\big|,
    \end{split}
\end{align}
and estimate each term on the right of \eqref{eq:split_difference_between_psi_eps-psi}.

By the symmetry and positive definiteness of $\Czero$, we have, for any $x, w \in \R^d$,
\begin{align}\label{eq:difference_gaussian_density}
    \begin{split}
        &\big| e^{-\frac{1}{2}x^\intercal\Czero^{-1}x}-e^{-\frac{1}{2}w^\intercal\Czero^{-1}w}\big| \leq C (e^{-c|x|^2}\vee e^{-c|w|^2})|x+w||x-w|\\
    & \leq C e^{-c|x|^2}(|2x|+|x-w|)|x-w|\Ind{|x|\leq |w|}+C e^{-c|w|^2}(|2w|+|x-w|)|x-w|\Ind{|x|> |w|}\\
    & \leq C(e^{-c_1|x|^2}\vee e^{-c_1|w|^2})\big(|x-w|+|x-w|^2 \big)
    \end{split}
\end{align} 
for some positive $c_1<c$.
Therefore, we have 
\begin{align*}
    \begin{split}
        \sup_{x\in A_\pm -y}&\big| e^{-\frac{1}{2}x^\intercal\Czero^{-1}x}-e^{-\frac{1}{2}\tilde{x}^\intercal\Czero^{-1}\tilde{x}}\big|  \leq \sup_{x\in A_\pm -y}Ce^{-c_1|x^{\geq i}|^2}\big(|x^{<i}+y^{<i}|+|x^{<i}+y^{<i}|^2\big)\\
        &\leq Ce^{-c_1|x^{\geq i}|^2}\sum_{j<i}\Big((\eps^{\lj \alpha-1}Le^{-\lj r(\eps)}+ \eps^\gj )+(\eps^{\lj \alpha-1}Le^{-\lj r(\eps)}+ \eps^\gj )^2\Big)\\
        &\leq C e^{-c_1|x^{\geq i}|^2}\eps^{q_1}
    \end{split}
\end{align*}
for some $q_1>0$. With this, we estimate
\begin{align}\label{eq:est_1st_difference}
    |\psi_\eps(y)-\mathtt{I}|\leq C \eps^{-\beta(\alpha)}\int_{A_\pm -y}e^{-c_1|x^{\geq i}|^2}\eps^{q_1} dx \leq C \eps^{q_1}.
\end{align}
Note that
\begin{align*}
    \mathtt{I} = \frac{\prod_{j<i}2(Le^{-\lj r(\eps)}\pm \eps^{\gj -(\lj \alpha-1)})}{\prod_{j<i}2Le^{-\lj r(0)}}\mathtt{II}.
\end{align*}
Also, clearly we have $|\mathtt{II}|\leq C$. Hence, due to \eqref{eq:condition_for_r(eps)} and \eqref{eq:condition_gamma_j} we have, for some $q_2>0$,
\begin{align}\label{eq:est_2nd_difference}
    \begin{split}
        \big|\mathtt{I}-\mathtt{II}\big|\leq \bigg|\frac{\prod_{j<i}2(Le^{-\lj r(\eps)}\pm \eps^{\gj -(\lj \alpha-1)})}{\prod_{j<i}2Le^{-\lj r(0)}}- 1\bigg|\big|\mathtt{II}\big|\leq C\eps^{q_2}.
    \end{split}
\end{align}

\medskip

For the term $|\mathtt{II}-\psi(y)|$, note that if $i=d+1$, then $\mathtt{II} = \psi(y)$.
Let us consider the case $i\leq d$. Due to \eqref{eq:condition_for_K'},
we have that if either $\alpha<\tfrac{1}{\lambda_i}$ and $j\geq i$, or   $\alpha=\tfrac{1}{\lambda_i}$ and  $j\geq i+1$ , then
\begin{align}\label{eq:pre_est_for_5th_difference}
    \begin{split}
        \int_{\R \setminus (A_\pm -y)^j} e^{-c|x^j|^2}dx^j =& \int_{\eps^{\lj \alpha -1 }Le^{-\lj r(\eps)}\pm \eps^\gj -y^j}^\infty e^{-c|x^j|^2}dx^j\\
        &+\int^{-\eps^{\lj \alpha -1 }Le^{-\lj r(\eps)}\mp \eps^\gj -y^j}_{-\infty}
    e^{-c|x^j|^2}dx^j\\
    \leq& 2\int_{\eps^{\lj \alpha -1}Le^{-\lj r(\eps)}\mp \eps^\gj -K'(\eps)}^\infty e^{-c|x^j|^2}dx^j\leq C\eps^{q_2}.  
    \end{split}
\end{align} 
For the case with $\alpha<\frac{1}{\li}$, by \eqref{eq:boundedness_of_gaussian_density_Czero} and \eqref{eq:pre_est_for_5th_difference}, we have
\begin{align}\label{eq:est_3rd_difference_case_1}
    \begin{split}
        |\mathtt{II}-\psi(y)|&\leq C\int_{\R^{d-i+1}\setminus(A_\pm-y)^{\geq i}}e^{-c|x^{\geq i}|^2}dx^{\geq i}\\
        &\leq C\sum_{j\geq i}\int_{\R \setminus (A_\pm -y)^j} e^{-c|x^j|^2}dx^j\leq C\eps^{q_2}.
    \end{split}
\end{align}
The case with $\alpha=\frac{1}{\li}$ is more involved. Let
\begin{align*}
    \mathtt{III} = \frac{\prod_{j<i}2Le^{-\lj r(0)}}{\sqrt{(2\pi)^d \det \Czero}}\int_{[-Le^{-\li r(0)}-y,Le^{-\li r(0)}-y]\times(A_\pm-y)^{> i}} e^{-\frac{1}{2}\tilde{x}^\intercal \Ceps^{-1}\tilde{x}}dx^{\geq i}.
\end{align*}
Then observe that, with $\triangle$ denoting the symmetric difference of two sets, by \eqref{eq:condition_for_r(eps)},
\begin{align*}
    |\mathtt{II}-\mathtt{III}|&\leq C\int_{(A_\pm-y)^{\geq i}\triangle([-Le^{-\li r(0)}-y,Le^{-\li r(0)}-y]\times(A_\pm-y)^{> i})} e^{-c|x^{\geq i}|^2}dx^{\geq i}\\
    &\leq C\int_{(A_\pm-y)^{i}\triangle[-Le^{-\li r(0)}-y,Le^{-\li r(0)}-y]} e^{-c|x^{i}|^2}dx^{i}\\
    &\leq C(|Le^{-\li r(\eps)}-Le^{-\li r(0)}|+\eps^\gi)\leq C\eps^{q_3}
\end{align*}for some $q_3>0$. On the other hand, by \eqref{eq:pre_est_for_5th_difference}, we have
\begin{align*}
    |\mathtt{III}-\psi(y)|&\leq C\int_{[-Le^{-\li r(0)}-y,Le^{-\li r(0)}-y]\times(\R^{d-i}\setminus(A_\pm-y)^{> i})} e^{-c|x^{\geq i}|^2}dx^{\geq i}\\
    &\leq C\sum_{j> i}\int_{\R \setminus (A_\pm -y)^j} e^{-c|x^j|^2}dx^j\leq C\eps^{q_2}.
\end{align*}
The last two displays together give 
\begin{align}\label{eq:est_3rd_difference_case_2}
    \text{if $\alpha=\tfrac{1}{\li}$,\quad then}\  |\mathtt{II}-\psi(y)|\leq C(\eps^{q_2}+\eps^{q_3}).
\end{align}

\medskip

To estimate the last term $\big|\psi(y)-\psi\big(\tfrac{f^{-1}(\eps y)}{\eps} \big)\big|$, first observe that by \eqref{eq:condition_for_K'}, there exists $\eps_0$ such that for all $\eps\leq \eps_0$, if $|y|\leq K'(\eps)$, then $\eps y\in f(O)$. Due to~\eqref{eq:diffeo-norm}, there is $C>0$ such that $\big|\tfrac{f^{-1}(\eps y)}{\eps}-y\big|\leq C\eps |y|^2$ for all $|y|\leq K'(\eps)$ with $\eps \leq \eps_0$. By this and  \eqref{eq:difference_gaussian_density}, we have, using the exponential term to absorb powers of $|y|$,
\begin{align}\label{eq:est_4th_difference}
    \begin{split}
        \big|\psi(y)-\psi\big(\tfrac{f^{-1}(\eps y)}{\eps} \big)\big| \leq & C\int_{\R^{d-i+1}}e^{-c_1|\tilde{x}|^2}\Big(\big|\tfrac{f^{-1}(\eps y)}{\eps}-y\big|+\big|\tfrac{f^{-1}(\eps y)}{\eps}-y\big|^2\Big)dx^{\geq i}\\
    \leq & C\int_{\R^{d-i+1}}e^{-c_2|\tilde{x}|^2}(\eps +\eps^2)dx^{\geq i} \leq C\eps.\\
    \end{split}
\end{align}

Combining \eqref{eq:split_difference_between_psi_eps-psi},  \eqref{eq:est_1st_difference}, \eqref{eq:est_2nd_difference}, \eqref{eq:est_3rd_difference_case_1}, \eqref{eq:est_3rd_difference_case_2}, and \eqref{eq:est_4th_difference}, we obtain~ \eqref{eq:key_psi_eps_and_psi}.
\end{proof}

\section{Approximations} \label{section:approximations}

\subsection{Proof of Lemma \ref{Lemma:rough_approximation}}
Let us recall that~$\nu>0$ is fixed and we work with processes defined in~\eqref{eq:Y_after_Duhamel}--\eqref{eq:V}. We define an exit time along each direction:
\begin{align}\label{eq:def_tau_j}
    \tj =\inf\{t>0:|Y^j_t|\geq L \}, \quad j=1,2,..., d.
\end{align}
Recalling~\eqref{eq:tau} and \eqref{eq:assumption_on_R=[-L,L]^d},  we obtain $\tau = \min_{1\leq j\leq d}\tj$. By \eqref{eq:condition_for_K'}, there is $\eps_0$ such that for $\eps <\eps_0$, we have $|Y^j_0|=|\eps y^j|\leq L $ for all $j$ and all $y$ with $|y|\leq K'(\eps)$. This fact together with~(\ref{eq:Y_after_Duhamel}) and (\ref{eq:def_tau_j}) implies that, for $\eps <\eps_0$ and $|y|\leq K'(\eps)$,
\begin{align}\label{eq:relation_tj_Uj}
    L=\eps e^{\lj \tj}|y^j+U^j_\tj|\text{,\quad i.e., \ }\tj = \frac{1}{\lj}\log \frac{L}{\eps |y^j+U^j_\tj|}.
\end{align}
Due to~\eqref{eq:T_0}, on $\{\tau >\alpha\log\eps^{-1}+r(\eps)\}$, we have $\tj >T_0$, so~(\ref{eq:relation_tj_Uj}) implies
\begin{align}\label{eq:tau>alpha_log_eps^-1_Probability}
    \begin{split}
        \Prob{\tau >\alpha \log\eps^{-1}+r(\eps)}=\Prob{|y^j + U^j_\tj|<\eps^{\lj \alpha -1}Le^{-\lj r(\eps)},\ j=1,\ldots,d} \\
        = \Prob{|y^j + U^j_{\tj\vee T_0}|<\eps^{\lj \alpha -1}Le^{-\lj r(\eps)} \textrm{\ and\ } \ \tau_j >T_0,\ j=1,\ldots,d}.
    \end{split}
\end{align}

Next, we approximate $U^j_{\tj\vee T_0}$ by $U^j_{T_0}$. Using the definition of $M_t$ given in (\ref{eq:M}) and the boundedness of $F$ and $r(\eps)$, we get, for some $C_1, C_2>0$,
\begin{align*}
    \langle M^j\rangle_{\tj \vee T_0}- \langle M^j\rangle_{ T_0}\leq C_1 e^{-2\lj T_0} \le C_2\eps^{2\lj \alpha}.
\end{align*}
By the exponential martingale inequality (see  \cite[Problem 12.10]{Bass}), this leads to
\begin{align*}
    \Prob{|M^j_{\tj\vee T_0}-M^j_{T_0}|>\tfrac{1}{2}\eps^\gj}\leq 2\exp(-\tfrac{1}{8C_2}\eps^{2\gj-2\lj\alpha}),
\end{align*}
where $\gj$ is chosen to satisfy \eqref{eq:condition_gamma_j}. For the drift term $V_t$, by the boundedness of $G$ and $r(\eps)$, we have the following estimate: for each $q>0$, there is $C_q>0$ such that
\begin{align*}
    \Prob{|\eps V^j_{\tj\vee T_0}-\eps V^j_{T_0}|>\tfrac{1}{2}\eps^\gj}\leq (2\eps^{1-\gj})^q \E{|V^j_{\tj\vee T_0}- V^j_{T_0}|^q}\leq  C_q\eps^{(1-\gj +\lj \alpha)q}.
\end{align*}
By choosing $q$ large, we derive from the above two displays and \eqref{eq:condition_gamma_j} that
\begin{align}\label{eq:U^j_approx}
    \Prob{|U^j_{\tj \vee T_0}-U^j_{T_0}|>\eps^\gj}\leq \eps^\nu,
\end{align}
uniformly in $y$ for $\eps $ small.

Now \eqref{eq:tau>alpha_log_eps^-1_Probability} and \eqref{eq:U^j_approx} immediately imply the upper bound in Lemma \ref{Lemma:rough_approximation}.

\medskip
To get the lower bound, first observe that by \eqref{eq:tau>alpha_log_eps^-1_Probability} we have
\begin{align}
\label{eq:lower_bound_tail}
    \begin{split}
        \Prob{\tau>\log \eps^{-1}+r(\eps)}&\geq \Prob{|y^j + U^j_\tj|<\eps^{\lj \alpha -1}Le^{-\lj r(\eps)},\forall j;\quad |U^j_{\tj}-U^j_{T_0}|\leq\eps^\gj,\forall j} \\
        & \geq \Prob{y+U_{T_0}\in A_-;\quad |U^j_{\tj}-U^j_{T_0}|\leq\eps^\gj,\forall j}\\
        &\geq\Prob{y+U_{T_0}\in A_-}-\Prob{y+U_{T_0}\in A_-; |U^j_{\tj}-U^j_{T_0}|>\eps^\gj,\exists j}.
    \end{split}
\end{align}
To estimate the second term on the right-hand side, we bound it by
\begin{equation}
\label{eq:second-term-lowerbound}
 \Prob{\tau \geq T_0; |U^j_{\tj}-U^j_{T_0}|>\eps^\gj,\exists j }  + \Prob{\tau < T_0; y+U_{T_0}\in A_-}.
\end{equation}
By \eqref{eq:U^j_approx}, the first term can be bounded by $d\eps^\nu$ for $\eps$ small. For the second term, we first introduce the following notations.
For  $x\in \R^d$, $A\subset \R^d$, and $t\in \R$, we write \begin{align}\label{eq:multi_d_notation}
    e^{\lam t}x = (e^{\lj t}x^j)_{j=1}^d\in\R^d, \quad e^{\lam t}A=\{e^{\lam t}x: x\in A \}\subset \R^d.
\end{align}
We recall that if $Y_0=\eps y$, then ~\eqref{eq:Y_after_Duhamel} holds.
Using the strong Markov property of $Y_t$ and the definition of $A_-$ given in \eqref{eq:def_A_pm}, we  obtain
\begin{align}
\label{eq:first-term-lowerbound1}
    \begin{split}
        &\Prob{\tau < T_0; y+U_{T_0}\in A_-}=\Prob{\tau < T_0; Y_{T_0}\in \eps e^{\lambda T_0} A_-}\\
        &\leq \sum_{j=1}^d \Prob{\tau_j<T_0; |Y^j_{T_0}|\leq L-\eps^{1-\lj\alpha +\gj}e^{\lj r(\eps)}}\\
        &\leq \sum_{j=1}^d\EE \ProbxBig{Y_{\tau_j}}{\inf_{t\in[0,T_0]}|Y_t^j|\leq L-\eps^{1-\lj\alpha +\gj}e^{\lj r(\eps)}},
    \end{split}
\end{align}
where $\mathbb{P}^{\mathbf{y}}$ denotes the probability measure under which $Y_t$ satisfies \eqref{eq:Y_SDE_before_Duhamel} with $Y_0=\mathbf{y}\in \R^d$. 
Note that if $|Y^j_0|=L$, then $|Y^j_t|=|e^{\lj t}(Y^j_0+\eps U^j_t)|\geq L-\eps|U^j_t|$. 
By this, using $-\lj\alpha+\gj<0$ (which is due to  \eqref{eq:condition_gamma_j}), the boundedness of $V_t$, and the exponential martingale inequality, we have, for some $c,\ c'>0$ and small $\eps$,
\begin{align}
\label{eq:first-term-lowerbound2}
    \begin{split}
        &\ProbxBig{Y_{\tau_j}}{\inf_{t\in[0,T_0]}|Y_t^j|\leq L-\eps^{1-\lj\alpha +\gj}e^{\lj r(\eps)}}\\
        &\leq \ProbxBig{Y_{\tau_j}}{\inf_{t\in[0,T_0]}(L-\eps|U^j_t|)\leq L-\eps^{1-\lj\alpha +\gj}e^{\lj r(\eps)}} \leq\ProbxBig{Y_{\tau_j}}{\eps^{-\lj\alpha +\gj}e^{\lj r(\eps)}\leq \sup_{t\in[0,T_0]}|U^j_t|}\\
        &\leq \ProbxBig{Y_{\tau_j}}{\eps^{-\lj\alpha +\gj}e^{\lj r(\eps)}-c\eps\leq \sup_{t\in[0,T_0]}|M^j_t|}\leq 2\exp(-c'\eps^{2(-\lj\alpha +\gj)})\leq \eps^\nu.
    \end{split}
\end{align}
Combining \eqref{eq:lower_bound_tail}, \eqref{eq:second-term-lowerbound}, \eqref{eq:first-term-lowerbound1}, and \eqref{eq:first-term-lowerbound2} leads to the desired lower bound.

\subsection{Proof of Lemma \ref{lemma:iteration}}
First of all, we state two density estimates that we need. For a random variable $\mathcal{X}$ with values in $\R^d$, its Lebesgue density, if exists, is denoted by $\rho_\mathcal{X}$. Since $U_t$ in \eqref{eq:Y_after_Duhamel} depends on $y$, we denote its density by $\rho^y_{U_t}$. 
\begin{Lemma}\label{Lemma:density_est}
     Consider \eqref{eq:Y_after_Duhamel} with $Y_0=\eps y$. Let  
     \begin{align}\label{eq:def_pp}
         \pp(x)=\sum_{j,k=1}^dx^\frac{\lj}{\lk},\quad \text{for }x\geq 0,
     \end{align}
    \begin{align}\label{eq:def_Z}
    Z^j_t = \int_0^t e^{-\lj s}F^j_l(0)dW^l_s.
    \end{align}
   Then
    \begin{enumerate}[1)]
        \item there is $\theta>0$ such that for each $\upsilon \in (0,1)$ there are $C,c,\delta>0$ such that, for $\eps$ sufficiently small,
    \begin{align*}
        |\rho^y_{U_{T(\eps)}}(x)-\rho_{Z_{T(\eps)}}(x)|\leq C\eps^\delta\big(1+\pp(\eps^{1-\upsilon}|y|)\big) e^{-c|x|^2},\quad  x, y\in \R^d, 
    \end{align*}
        holds for all deterministic functions $T(\cdot)$ satisfying $1\leq T(\eps)\leq \theta\log\eps^{-1}$, $\eps\in(0,1)$;
    \item for each $\theta'>0$, there are $C',c',\delta'>0$ such that, for $\eps$ sufficiently small,
    \begin{align*}
          |\rho_{Z_{T(\eps)}}(x)-\rho_{Z_\infty}(x)|\leq C'\eps^{\delta'} e^{-c'|x|^2},\quad x\in \R^d, 
    \end{align*}
        holds for all deterministic functions $T(\cdot)$ satisfying $T(\eps)\geq \theta'\log\eps^{-1}$, 
        $\eps\in(0,1)$.
    \end{enumerate}
\end{Lemma}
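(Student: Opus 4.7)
The plan is to treat the two parts of Lemma~\ref{Lemma:density_est} by rather different methods: part~2) reduces to a direct Gaussian comparison, while part~1) requires a Malliavin-calculus based density estimate in the spirit of~\cite{bally2014}.

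For part~2), both $Z_{T(\eps)}$ and $Z_\infty$ are centered Gaussian vectors with covariance matrices
\begin{align*}
    \Czero_t^{jk} = \sum_{l=1}^n \frac{F^j_l(0)F^k_l(0)}{\lj+\lk}\bigl(1-e^{-(\lj+\lk)t}\bigr), \qquad \Czero = \lim_{t\to\infty}\Czero_t,
\end{align*}
so $\|\Czero-\Czero_t\|\le C e^{-2\lambda_d t}$. The surjectivity of $\sigma(0)$ together with \eqref{eq:F(0)=simga(0)} guarantees that $\Czero$ and $\Czero_t$ (for $t\ge 1$) are uniformly positive definite. For $t=T(\eps)\ge\theta'\log\eps^{-1}$ one then has $\|\Czero-\Czero_{T(\eps)}\|\le C\eps^{2\theta'\lambda_d}$. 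Writing both densities explicitly and Taylor-expanding in the covariance parameter, $|\rho_{Z_{T(\eps)}}(x)-\rho_{Z_\infty}(x)|$ is bounded by $C\eps^{2\theta'\lambda_d}$ times a still-Gaussian expression in $x$, giving part~2) with $\delta'=2\theta'\lambda_d$ and some $c'>0$.

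For part~1), the idea is to view $U_t$ as a small perturbation of the purely Gaussian process $Z_t$. Using $F^j_l(Y_s) = F^j_l(0) + [F^j_l(Y_s)-F^j_l(0)]$ together with $Y_s=\eps e^{\lambda s}(y+U_s)$ and the smoothness of $F$, one obtains a decomposition
\begin{align*}
    U^j_t = Z^j_t + R^{\eps,y,j}_t,
\end{align*}
where $R^{\eps,y,j}_t$ is a stochastic integral whose integrand has size $\eps e^{\lambda_1 s}(|y|+|U_s|)$, plus the drift contribution $\eps V^j_t$. For $s\le T(\eps)\le\theta\log\eps^{-1}$ with $\theta$ small, the factor $\eps e^{\lambda_1 s}$ stays of order $\eps^{1-\theta\lambda_1}$, which is the origin of the small parameter $\eps^\delta$ in the final bound. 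I would then invoke the density-comparison machinery of~\cite{bally2014}, which, given (i) non-degeneracy of the Malliavin covariance matrix of $U_{T(\eps)}$ (guaranteed by~\eqref{eq:modified_F}) and (ii) $L^p$-control of higher Malliavin--Sobolev norms of $U_{T(\eps)}-Z_{T(\eps)}$, delivers a pointwise density comparison with the Gaussian factor $e^{-c|x|^2}$ inherited from the concentration of $Z_{T(\eps)}$.

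The principal obstacle is the quantitative Malliavin bookkeeping that produces the specific polynomial factor $\pp(\eps^{1-\upsilon}|y|)=\sum_{j,k}(\eps^{1-\upsilon}|y|)^{\lj/\lk}$ rather than a generic polynomial in $|y|$. The fractional exponents $\lj/\lk$ arise when iterating Gronwall-type bounds for $\|U_s\|_{k,p}$ along the linearized flow: Malliavin-differentiating in the $k$-th noise direction introduces a factor $e^{-\lk s}$, while the $j$-th component of $Y$ contributes $e^{\lj t}$, and the combination $e^{(\lj-\lk)T(\eps)}$ translates, through $T(\eps)\le \theta\log\eps^{-1}$, into a power of $\eps$ whose exponent is naturally expressed as a multiple of $\lj/\lk$. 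The auxiliary parameter $\upsilon$ provides the slack needed to absorb these exponents against the $\eps^{1-\theta\lambda_1}$ gain via Young-type inequalities. Carrying out this bookkeeping so that both the $\eps^\delta$ gain and the precise $\pp$-structure emerge is the technical heart of the argument; the remaining steps are standard applications of Malliavin calculus under the smoothness assumptions on $F$, $G$ and the non-degeneracy~\eqref{eq:modified_F}.
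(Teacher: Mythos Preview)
Your overall strategy matches the paper's: part~2) is indeed a direct comparison of Gaussian covariances, and part~1) is exactly the Bally--Caramellino density-comparison scheme applied to $U_T$ versus $Z_T$, with the inputs being Malliavin--Sobolev control on $N_T=U_T-Z_T$ and uniform non-degeneracy of the Malliavin matrix. Two points in your sketch deserve correction, however.

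First, your proposed mechanism for the factor $\pp(\eps^{1-\upsilon}|y|)$ is not the one that actually works. The exponents $\lj/\lk$ do \emph{not} arise from Gronwall iterations on Malliavin norms or from combinations $e^{(\lj-\lk)T}$; that route produces only exponents of the form $\theta(\lj-\lk)$, which are not ratios. In the paper the $\lj/\lk$ structure comes from a stopping-time argument in the \emph{zeroth}-order estimate: one sets $\eta_k=\inf\{t:|Y^k_t|\ge \eps^\beta\}$ with $\beta=\upsilon/2$, splits the integral defining $N^i_T$ at $\eta=\min_k\eta_k$, and uses $|H(Y_s)|\le C\eps^\beta$ before $\eta$ and $|H|\le C$ after. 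The tail contribution is controlled by $\E e^{-p\li\eta}$, and since $\eps^\beta\le |Y^k_{\eta_k}|=\eps e^{\lk\eta_k}|y^k+U^k_{\eta_k}|$ forces $\eta_k\ge \lk^{-1}\log(\eps^{\beta-1}/|y^k+U^k_{\eta_k}|)$, one gets $e^{-p\li\eta_k}\le (\eps^{1-\beta}|y^k+U^k_{\eta_k}|)^{p\li/\lk}$, which is precisely where the fractional powers appear. The Gronwall-type system (Lemma~\ref{lemma:solve_inequality_system}) is used only for the higher Malliavin derivatives, and those bounds turn out to be \emph{independent of $y$}.

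Second, you significantly understate the work behind ``non-degeneracy of the Malliavin covariance matrix of $U_{T(\eps)}$ (guaranteed by~\eqref{eq:modified_F})''. Pointwise ellipticity~\eqref{eq:modified_F} does not by itself give $\sup_{\eps\le\eps_0}\E|\det\sigma_{U_{T(\eps)}}|^{-p}<\infty$; the time horizon grows like $\log\eps^{-1}$, so standard results (e.g.\ \cite[Lemma~2.3.1]{nualart}) must be redone with constants tracked uniformly in $\eps$. The paper devotes an entire subsection to this: it factors $\sigma_{U_T}=\YY(T)\CC_T\YY(T)^\intercal$, bounds moments of $\ZZ=\YY^{-1}$, and then proves a quantitative small-ball estimate for $\inf_{|v|=1}\langle v,\CC_T v\rangle$ (Lemma~\ref{lemma:near_zero_probability_est}) via an argument adapted from \cite[Lemma~2.3.2]{nualart}. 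This is a genuine technical ingredient you should not expect to come for free.
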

This lemma will be proved in Section \ref{section:density_estimate}. 

We recall the notation introduced in \ref{eq:multi_d_notation} and the definition of $T_0=T_0(\eps)$ in~\eqref{eq:T_0}. We set $N=\min\{n\in\mathbb{N}:\frac{T_0}{n}\leq \theta\log\eps^{-1}, \forall \eps \in(0,1/2]\}$, where $\theta$ was introduced in Lemma \ref{Lemma:density_est},
 and $t_k=\frac{k}{N}T_0$.  Hence, each increment $t_k-t_{k-1}$ satisfies the condition imposed on time $T(\eps)$ in part 1 of Lemma \ref{Lemma:density_est}, so we can get the following iteration result.

\begin{Lemma}\label{lemma:iterative_est}
For each $\upsilon\in(0,1)$, there are constants $\eps_k,C_k,\delta_k>0$, $k=1,2,...,N$, and  $\upsilon'>0$ such that
\begin{align}\label{eq:induction_step_conclusion}
    \sup_{|y|\leq \eps^{\upsilon-1}}\sup_{|w|\leq \eps^{\upsilon'-1}}\big|\Probx{\eps y}{y+U_{t_k}+e^{-\lam t_k}w\in A_\pm} -  \Prob{y+Z_{t_k}+e^{-\lam t_k}w\in A_\pm}\big|\leq C_k \eps^{\beta(\alpha)+\delta_k},
\end{align}
holds for each $k=1,2,..., N$ and for all $\eps\in(0,\eps_k]$.
\end{Lemma}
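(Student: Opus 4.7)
I would proceed by induction on $k$. For the base case $k=1$, the definition of $N$ gives $t_1=T_0/N\le\theta\log\eps^{-1}$, so Lemma~\ref{Lemma:density_est}(1) applies directly to $U_{t_1}$ for $|y|\le\eps^{\upsilon-1}$. Writing the left-hand side of~\eqref{eq:induction_step_conclusion} at $k=1$ as $\int(\rho^y_{U_{t_1}}(x)-\rho_{Z_{t_1}}(x))\Ind{x\in A_\pm-y-e^{-\lam t_1}w}\,dx$ and inserting the density estimate reduces the bound to controlling $\int_{A_\pm-y-e^{-\lam t_1}w}e^{-c|x|^2}\,dx$. Since $A_\pm$ is a product of intervals of half-width $\eps^{\lj\alpha-1}Le^{-\lj r(\eps)}\pm\eps^{\gj}$, an estimate analogous to the ones used in the proof of~\eqref{eq:key_psi_eps_and_psi} shows this integral is of order $\eps^{\beta(\alpha)}$ uniformly over admissible $y$ and $w$, which yields the base case.

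\textbf{Inductive step.} Assume the bound at step $k-1$. Decompose $U_{t_k}=U_{t_{k-1}}+(U_{t_k}-U_{t_{k-1}})$ and use the strong Markov property at $t_{k-1}$; a time shift converts the increment into
\begin{equation*}
U_{t_k}-U_{t_{k-1}}=e^{-\lam t_{k-1}}\bar U_{\Delta t},\qquad \Delta t=T_0/N,
\end{equation*}
where $\bar U$ is the $U$-process for the same SDE~\eqref{eq:Y_SDE_before_Duhamel} driven by a fresh Brownian motion (independent of $\mathcal{F}_{t_{k-1}}$) and started from the effective initial condition $\bar y_0=Y_{t_{k-1}}/\eps=e^{\lam t_{k-1}}(y+U_{t_{k-1}})$. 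Consequently,
\begin{equation*}
\{y+U_{t_k}+e^{-\lam t_k}w\in A_\pm\}=\{y+U_{t_{k-1}}+e^{-\lam t_{k-1}}(\bar U_{\Delta t}+e^{-\lam\Delta t}w)\in A_\pm\}.
\end{equation*}
I would then integrate out $\bar U_{\Delta t}$ against its conditional density and use Lemma~\ref{Lemma:density_est}(1) to replace $\rho^{\bar y_0}_{\bar U_{\Delta t}}$ by $\rho_{Z_{\Delta t}}$; the resulting expression matches the left-hand side of~\eqref{eq:induction_step_conclusion} at step $k-1$ with the random shift $w':=\tilde Z+e^{-\lam\Delta t}w$ in place of $w$, where $\tilde Z$ is an independent copy of $Z_{\Delta t}$. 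Applying the inductive hypothesis to $w'$ and then using the distributional identity $Z_{t_k}\stackrel{d}{=}Z_{t_{k-1}}+e^{-\lam t_{k-1}}\tilde Z$ with $\tilde Z$ independent of $Z_{t_{k-1}}$ reassembles the final expression into $\Prob{y+Z_{t_k}+e^{-\lam t_k}w\in A_\pm}$.

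\textbf{Main obstacle.} The delicate point is that the effective initial condition $\bar y_0$ carries the large factor $e^{\lam t_{k-1}}$, so $|\bar y_0|$ can reach order $\eps^{-\lambda_1\alpha(k-1)/N+\upsilon-1}$, well outside the favorable regime in which the $(1+\pp(\eps^{1-\upsilon}|\bar y_0|))\,\eps^\delta$ error from Lemma~\ref{Lemma:density_est}(1) is small. I would handle this by combining two ingredients. First, restrict all computations to the high-probability event $\{|U_{t_{k-1}}|\le\eps^{-\delta''}\}$ for some small $\delta''>0$, whose complement has super-polynomially small probability by the exponential martingale inequality applied to $M_{t_{k-1}}$ (which has uniformly bounded quadratic variation), and similarly truncate $|\tilde Z|$ so that $|w'|\le\eps^{\upsilon'-1}$ continues to hold and the inductive hypothesis remains applicable. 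Second, absorb the residual $\pp$-blow-up against both the $\eps^\delta$ prefactor from Lemma~\ref{Lemma:density_est} and the small $\mathcal{O}(\eps^{\beta(\alpha)})$ volume contribution obtained from integrating the Gaussian $e^{-c|\bar w|^2}$ against the indicator of the relevant shifted box, as in the base case. Since $N$ is fixed and only finitely many iterations take place, the constants $C_k,\delta_k$, and $\upsilon'$ in~\eqref{eq:induction_step_conclusion} can be chosen uniformly throughout the induction.
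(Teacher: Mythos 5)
Your overall plan matches the paper's: decompose via the Markov property at $t_{k-1}$, apply the density comparison of Lemma~\ref{Lemma:density_est}(1) to the increment on $[t_{k-1},t_k]$, invoke the inductive hypothesis with a randomly shifted $w$, and then recombine the Gaussian increments using $Z_{t_k}\overset{d}{=}Z_{t_{k-1}}+e^{-\lam t_{k-1}}\widetilde Z_{t_1}$. That part is fine, and you correctly flag the large effective initial condition $\bar y_0 = z(u) = e^{\lam t_{k-1}}(y+u)$ as the central difficulty. The gap is in how you propose to handle it.

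Truncating to $\{|U_{t_{k-1}}|\le \eps^{-\delta''}\}$ does essentially nothing here, because the blow-up of $|z(u)|$ is driven by the factor $e^{\lambda_1 t_{k-1}}\approx \eps^{-\lambda_1\alpha(k-1)/N}$, not by the size of $|U_{t_{k-1}}|$. Even on your truncation event (and already just from $|y|\le\eps^{\upsilon-1}$), one has $\eps^{1-\upsilon'}|z(u)|\lesssim \eps^{\upsilon-\upsilon'-\lambda_1\alpha(k-1)/N}$, which for $k$ close to $N$ and large $\lambda_1\alpha$ is a large negative power of $\eps$. Consequently $\pp(\eps^{1-\upsilon'}|z(u)|)$ can grow like $\eps^{-M}$ with $M$ arbitrarily large, and it cannot be "absorbed" by a fixed $\eps^{\delta}$ from Lemma~\ref{Lemma:density_est} together with a uniform $\mathcal O(\eps^{\beta(\alpha)})$ volume factor. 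As stated, that step would fail.

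What makes the argument close, and what the paper actually does, is to exploit the constraint set, not a truncation. The integral in the density comparison runs only over those $x$ with $z(u)+x+e^{-\lam t_1}w\in e^{\lam t_{k-1}}A_\pm$. The paper first fixes $\upsilon'$ so that $\upsilon'<\lj\alpha/N$ for every $j$ (condition~\eqref{eq:condition_upsilon'}); this guarantees that the stretched box $e^{\lam t_{k-1}}A_\pm$ has half-widths $\lesssim \eps^{\upsilon'-1}$ for all $k\le N$. Combined with $|w|\le\eps^{\upsilon'-1}$, the constraint then forces $|z(u)|\lesssim \eps^{\upsilon'-1}+|x|$ on the integration domain, hence $\eps^{1-\upsilon'}|z(u)|\lesssim 1+|x|$. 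This turns $\pp(\eps^{1-\upsilon'}|z(u)|)$ into a polynomial in $|x|$, which the Gaussian factor $e^{-c|x|^2}$ absorbs pointwise, \emph{before} integration — no truncation of $U_{t_{k-1}}$ is needed, and no appeal to the $\eps^{\beta(\alpha)}$ volume is required to kill the $\pp$-term. Your write-up gestures at "integrating the Gaussian against the indicator of the relevant shifted box," but it labels this contribution as a uniform $\mathcal O(\eps^{\beta(\alpha)})$, which is precisely what loses the $z(u)$-dependence that makes the bound work. You also need the specific quantitative choice of $\upsilon'$ above; introducing $\upsilon'$ only as the scale for the shift $w$ in the inductive hypothesis, without tying it to $N$ and $\lj\alpha$, leaves the key inequality unavailable.
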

Let us first derive  Lemma \ref{lemma:iteration} from Lemmas~\ref{Lemma:density_est} and~\ref{lemma:iterative_est}, and then return to the proof  of the latter.

\begin{proof}[Proof of Lemma \ref{lemma:iteration}]
Set $k=N$ and $w=0$ in Lemma \ref{lemma:iterative_est}. As $t_N=T_0$, we have that for each $\upsilon\in(0,1)$, there is $\delta>0$ such that
\begin{align*}
    \sup_{|y|\leq \eps^{\upsilon-1}}\big|\Probx{\eps y}{y+U_{T_0}\in A_\pm} -  \Prob{y+Z_{T_0}\in A_\pm}\big|=\smallo{\eps^{\beta(\alpha)+\delta}}.
\end{align*}
It is easy to see from \eqref{eq:def_Z} that $Z_\infty$ is defined (in the sense of a.s.-convergence) and has the same distribution as $\mathcal{Z}$: it is a centered Gaussian vector with covariance matrix \eqref{eq:def_C_0_Czero} since $F(0)=\sigma(0)$ by~\eqref{eq:F(0)=simga(0)}.  Taking $\theta'>0$ such that $T_0\geq \theta'\log\eps^{-1}$ for all $\eps$, part 2 of Lemma \ref{Lemma:density_est} and the definition of $A_\pm$ given in \eqref{eq:def_A_pm}, imply that, for $\eps$ sufficiently small, 
\begin{align*}
    |\Prob{y+Z_{T_0}\in A_\pm} -  \Prob{y+\mathcal{Z}\in A_\pm}\big|=\smallo{\eps^{\beta(\alpha)+\delta'}},\quad\forall y\in\R^d.
\end{align*}
The above two displays together imply the desired result.
 \end{proof}

\begin{proof}[Proof of Lemma \ref{lemma:iterative_est}]
Let us choose $\upsilon'\in(0,1)$ to satisfy
\begin{align}\label{eq:condition_upsilon'}
    \frac{\lj \alpha}{N} \geq \frac{\lam_d\alpha}{N} >\upsilon', \quad \text{ for all }j=1,2,...,d. 
\end{align}

For the case $k=1$, \eqref{eq:induction_step_conclusion} follows from Lemma \ref{Lemma:density_est} and the definition of $A_\pm$ in \eqref{eq:def_A_pm}. Then we proceed by induction. Assume \eqref{eq:induction_step_conclusion} holds for $k-1$ with $k\leq N$. 

Set $z(u)= e^{\lam t_{k-1}}(y+u)$. The strong Markov property of $Y_t$ implies that 
\begin{align}\label{eq:strong_markov_Y}
    \begin{split}
        \Probx{\eps y}{y+U_{t_k}+e^{-\lam t_k}w\in A_\pm}  = & \Probx{\eps y}{Y_{t_k}+\eps w\in \eps e^{\lam t_k}A_\pm} \\ = &  \EX{\eps y}{\Probx{ Y_{t_{k-1}}}{Y_{t_1}+\eps w\in \eps e^{\lam t_k}A_\pm}} \\
         = & \EXBig{\eps y}{\Probx{ \eps z(u)}{z(u)+U_{t_1}+e^{-\lam t_1}w\in e^{\lam t_{k-1}}A_\pm}|_{u=U_{t_{k-1}}}}.
    \end{split}
\end{align}
We will show the error of replacing $U_{t_1}$ by $Z_{t_1}$ and $U_{t_{k-1}}$ by $Z_{t_{k-1}}$ is small.

Using Lemma \ref{Lemma:density_est} (1) with $\upsilon'$ in place of $\upsilon$, we see that  there are $\delta', C',c'$ such that 
\begin{align*}
    \begin{split}
        &\big|\Probx{ \eps z(u)}{z(u)+U_{t_1}+e^{-\lam t_1}w\in e^{\lam t_{k-1}}A_\pm}- \Probx{ \eps z(u)}{z(u)+Z_{t_1}+e^{-\lam t_1}w\in e^{\lam t_{k-1}}A_\pm} \big|\\
        &\leq \int_{\{x\in\R^d:z(u)+x+e^{-\lam t_1}w\in e^{\lam t_{k-1}}A_\pm\}}C'\eps^{\delta'}\big(1+\pp(\eps^{1-\upsilon'}|z(u)|)\big)e^{-c'|x|^2}dx.
    \end{split}
\end{align*}
By \eqref{eq:condition_upsilon'},  $t_{k-1}=\frac{k-1}{N}T_0$, and $k\leq N$, we have
\begin{align*}
    e^{\lj t_{k-1}}\eps^{\lj \alpha -1}\leq e^{\lj t_{N-1}}\eps^{\lj \alpha -1} \leq \eps^{\frac{1}{N}\lj \alpha -1 }e^{\frac{N-1}{N} r(\eps)} < \eps^{\upsilon'-1}e^{\frac{N-1}{N}r(\eps)}.
\end{align*}
Together with the definition of $A_\pm$ in \eqref{eq:def_A_pm}, this implies that, for some $C>0$, 
\begin{align*}
    \eps^{1-\upsilon'}|z(u)| \leq C+\eps^{1-\upsilon'}|x|,
\end{align*}
for $z(u)$ satisfying $z(u)+x+e^{-\lam t_1}w\in e^{\lam t_{k-1}}A_\pm$ and $|w|\leq \eps^{\upsilon'-1}$.
Using $e^{-c'|x|^2}$ to absorb powers of $|x|$, the above three displays give, for some $C,c>0$,
\begin{align*}
    \begin{split}
        &\big|\Probx{ \eps z(u)}{z(u)+U_{t_1}+e^{-\lam t_1}w\in e^{\lam t_{k-1}}A_\pm}- \Prob{z(u)+Z_{t_1}+e^{-\lam t_1}w\in e^{\lam t_{k-1}}A_\pm} \big|\\
        &\leq \eps^{\delta'}\int_{\{x\in\R^d:z(u)+x+e^{-\lam t_1}w\in e^{\lam t_{k-1}}A_\pm\}}Ce^{-c|x|^2}dx, \quad|w|\leq \eps^{\upsilon'-1}.
    \end{split}
\end{align*}
Let $\NN$ be a centered Gaussian with density proportional to $e^{-c|x|^2}$ and independent of $\mathcal{F}_{t_{k-1}}$. Then the above display and \eqref{eq:strong_markov_Y} imply that
\begin{align*}
    \begin{split}
        \mathtt{I}&= \Big|\Probx{\eps y}{y+U_{t_k}+e^{-\lam t_k}w\in A_\pm}-\EXBig{\eps y}{\Prob{z(u)+Z_{t_1}+e^{-\lam t_k}w\in e^{\lam t_{k-1}}A_\pm}|_{u=U_{t_{k-1}}}}\Big|\\
        &\leq C\eps^{\delta'}\Probx{\eps y}{y+U_{t_{k-1}}+e^{-\lam t_k}w+e^{-\lam t_{k-1}}\NN \in A_\pm},\quad|w|\leq \eps^{\upsilon'-1}.
    \end{split}
\end{align*}
Then we choose $\rho$ large so that
\begin{align}\label{eq:choice_of_rho_in_iteration}
    \Prob{|\NN|>\rho\log\eps^{-1}}=o(\eps^{\beta(\alpha)}),\quad \Prob{|Z_{t_1}|>\rho\log\eps^{-1}}=o(\eps^{\beta(\alpha)+\delta'}).
\end{align}
Note that $e^{-\lambda t_1}$ decays like a small positive power of $\eps$. So, there is $\eps_k$ such that
\begin{align}\label{eq:induction_assumption_satisfied}
    \text{if }|w| \leq \eps^{\upsilon'-1}\text{ for }\eps \leq \eps_k ,\quad\text{ then }|e^{-\lam t_1}w|+\rho\log\eps^{-1}\leq \eps^{\upsilon'-1} \text{ for }\eps \leq \eps_{k-1}.
\end{align}

Then, the following holds uniformly over $|y|\leq \eps^{\upsilon-1},|w|\leq \eps^{\upsilon'-1}$ and $\eps \in(0,\eps_k]$:
\begin{align*}
\begin{split}
        \mathtt{I}&\leq C\eps^{\delta'}\Probx{\eps y}{y+U_{t_{k-1}}+e^{-\lam t_{k-1}}(e^{-\lam t_1}w+\NN) \in A_\pm; |\NN|\leq \rho\log\eps^{-1}} + \smallo{\eps^{\beta(\alpha)+\delta'}}\\
    &\leq C\eps^{\delta'}\Prob{y+Z_{t_{k-1}}+e^{-\lam t_{k-1}}(e^{-\lam t_1}w+\NN)\in A_\pm}+\smallo{\eps^{\beta(\alpha)+\delta_{k-1}+\delta'}}+\smallo{\eps^{\beta(\alpha)+\delta'}},
\end{split}
\end{align*}
where in the second inequality we used the induction assumption allowed by \eqref{eq:induction_assumption_satisfied}, independence of $\NN$, Fubini's theorem, and \eqref{eq:choice_of_rho_in_iteration}. One can check that for $k-1 \geq 1$, there are $C,c>0$ such that $\rho_{Z_{t_{k-1}}}(x)\leq Ce^{-c|x|^2}$ for all $x\in\R^d$. We also recall that $i=i(\alpha)$ is given in \eqref{eq:def_i=i(alpha)}. Hence, we can estimate, using Fubini's theorem, the definition of $A_\pm$ given in \eqref{eq:def_A_pm}, the definition of $\beta(\alpha)$ in~\eqref{eq:beta_alpha} and notations given in \eqref{eq:projection_notation1}--\eqref{eq:projection_notation2},
\begin{align*}
    &\Prob{y+Z_{t_{k-1}}+e^{-\lam t_{k-1}}(e^{-\lam t_1}w+\NN)\in A_\pm}\leq\E\int_{A_\pm-y-e^{-\lam t_{k-1}}(e^{-\lam t_1}w+\NN)} Ce^{-c|x|^2}dx\\
    &\leq C\int_{(A_\pm)^{<i}\times \R^{d-i}}e^{-c|x^{\geq i}|^2}dx \leq C\big|(A_\pm)^{<i}\big|\leq C'\eps^{\beta(\alpha)}.
\end{align*}
The above two displays indicate that, for some $\delta''>0$
\begin{align}\label{eq:iterative_est_est_of_I}
    \mathtt{I} =\smallo{\eps^{\beta(\alpha)+\delta''}}, \quad\text{uniformly over $|y|\leq \eps^{\upsilon-1}$, $|w|\leq \eps^{\upsilon'-1}$}. 
\end{align}

Then we estimate the error caused by replacing $U_{t_{k-1}}$ by $Z_{t_{k-1}}$. Let $\tZ_{t_1}$ be a copy of $Z_{t_1}$ independent of $\mathcal{F}_{t_{k-1}}$. Using this independence and \eqref{eq:choice_of_rho_in_iteration}, we have that the following holds uniformly over $|y|\leq \eps^{\upsilon-1}$ and $|w|\leq\eps^{\upsilon'-1}$ with $\eps \in(0,\eps_k]$:
\begin{align*} 
    &\EXBig{\eps y}{\Prob{z(u)+Z_{t_1}+e^{-\lam t_1}w\in e^{\lam t_{k-1}}A_\pm}|_{u=U_{t_{k-1}}}}\\
    &= \Probx{\eps y}{y+U_{t_{k-1}}+e^{-\lam t_{k-1}}(e^{-\lam t_1}w+\tZ_{t_1})\in A_\pm}\\
    & = \Probx{\eps y}{y+U_{t_{k-1}}+e^{-\lam t_{k-1}}(e^{-\lam t_1}w+\tZ_{t_1})\in A_\pm; |\tZ_{t_1}|\leq \rho\log\eps^{-1}}+\smallo{\eps^{\beta(\alpha)+\delta'}}\\
    &= \Probx{\eps y}{y+Z_{t_{k-1}}+e^{-\lam t_{k-1}}(e^{-\lam t_1}w+\tZ_{t_1})\in A_\pm; |\tZ_{t_1}|\leq \rho\log\eps^{-1}}+\smallo{\eps^{\beta(\alpha)+\delta_{k-1}}}+\smallo{\eps^{\beta(\alpha)+\delta'}}\\
    &= \Probx{\eps y}{y+Z_{t_{k-1}}+e^{-\lam t_{k-1}}\tZ_{t_1}+e^{-\lam t_k}w\in A_\pm}+\smallo{\eps^{\beta(\alpha)+\delta_{k-1}\wedge \delta'}},
\end{align*}
where we used the induction assumption in the third identity allowed by \eqref{eq:induction_assumption_satisfied}, independence of $\tZ_{t_1}$ and Fubini's theorem.
By this independence again, a simple computation reveals that $Z_{t_{k-1}}+e^{-\lam t_{k-1}}\tZ_{t_1}$ has the same distribution as that of $Z_{t_k}$. Then the above display implies that
\begin{align*}
    &\Big|\EXBig{\eps y}{\Prob{z(u)+Z_{t_1}+e^{-\lam t_1}w\in e^{\lam t_{k-1}}A_\pm}|_{u=U_{t_{k-1}}}}- \Probx{\eps y}{y+Z_{t_k}+e^{-\lam t_k}w\in A_\pm}\Big|\\
    &=\smallo{\eps^{\beta(\alpha)+\delta_{k-1}\wedge \delta'}}, \quad \text{uniformly over $|y|\leq\eps^{\upsilon-1}$, $|w|\leq \eps^{\upsilon'-1}$  and $\eps \in(0,\eps_k]$. }
\end{align*}
From this and \eqref{eq:iterative_est_est_of_I}, we derive \eqref{eq:induction_step_conclusion} for $k$, which completes our proof.
\end{proof}

\section{Density estimate}\label{section:density_estimate}

In this section, we prove Lemma \ref{Lemma:density_est}.

We briefly introduce Malliavin calculus notations. For $\mathcal{T}>0$, on $(W_t, t\in[0,\mathcal{T}])$, let $\D$ be the derivative operator; $\sigma_{\mathcal{X}}$ be the Malliavin covariance matrix for a random vector $\mathcal{X}\in \mathcal{F}_\mathcal{T}$;  $\|\cdot \|_{k,p,\mathcal{T}}$ be the Sobolev norm defined in terms of derivatives up to the $k$th order with $L^p$ integrability; $\mathbb{D}^{k,p}(\mathcal{T})$ be the corresponding Sobolev space, in particular, $\mathbb{D}^{k,\infty}(\mathcal{T}) = \cap_{p\geq1}\mathbb{D}^{k,p}(\mathcal{T})$. More details can be found in \cite{nualart}.

Theorem 2.14.B from \cite{bally2014} estimates the difference between derivatives of two densities in terms of Sobolev norms and the covariance matrix. For our purposes, in our statement of this result, Theorem~\ref{thm:DensityDifference} below, we simplify the conditions of the original theorem by setting the localization random variable~$\mathbf{\Theta}$ to be $1$, the derivative order $q=0$ and using Meyer's inequality (c.f. \cite[Theorem~1.5.1]{nualart}) to bound the Ornstein--Uhlenbeck operator. We stress that, although the conditions 
of  Theorem~2.14.B as it is stated in~\cite{bally2014} do not formally allow for $q=0$, that theorem is still
valid for this value of~$q$. In fact,  in~\cite{bally2014}, Theorem~2.14 is derived from Theorem~2.1 via an approximation argument. In turn, part B of Theorem 2.1 is restated and proved in the form of 
Theorem 3.10, where~$q$ is allowed to be $0$.
\begin{Th} \label{thm:DensityDifference}
For $i=1,2$, let $\mathcal{X}_i \in \mathbb{D}^{3,\infty}(\mathcal{T})$ with values in $\R^d$ satisfy 
$\E{(\det\sigma_{\mathcal{X}_i})^{-p} }<\infty$ for every $p >1$.

Then there exist positive constants $C,a,b, \gamma$ only depending on $d$ such that for all $x\in\R^d$
\begin{align*}
    |\rho_{\mathcal{X}_1}(x)-\rho_{\mathcal{X}_2}(x)| \leq & C \|\mathcal{X}_1-\mathcal{X}_2 \|_{2,\gamma,\mathcal{T}}\bigg(\prod_{i=1,2}\Big(1\vee \E{(\det\sigma_{\mathcal{X}_i})^{-\gamma} }\Big)\Big(1+\| \mathcal{X}_i\|_{3,\gamma,\mathcal{T}}\Big) \bigg)^a\\
    & \cdot \Big(\sum_{i=1,2}\Prob{|\mathcal{X}_i-x|<2}\Big)^b.
\end{align*}
\end{Th}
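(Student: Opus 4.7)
The plan is to obtain this statement as a direct specialization of Theorem~2.14.B in Bally--Caramellino~\cite{bally2014}, which bounds $|\partial^q \rho_{\mathcal{X}_1}(x) - \partial^q \rho_{\mathcal{X}_2}(x)|$ for a multiindex order $q \geq 0$, under an auxiliary localizing random variable $\mathbf{\Theta}$, in terms of Sobolev norms, Malliavin covariance matrices, and the Ornstein--Uhlenbeck operator $L$ applied to $\mathcal{X}_i$ and $\mathbf{\Theta}$. Three simplifications yield the form stated above.

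First, I would set $\mathbf{\Theta} \equiv 1$. This eliminates all terms in the Bally--Caramellino bound that involve Sobolev norms of $\mathbf{\Theta}$ or Malliavin covariance structures involving $\mathbf{\Theta}$, and replaces the weighted density $\rho_{\mathbf{\Theta},\mathcal{X}_i}$ by the plain density $\rho_{\mathcal{X}_i}$. Second, I would set $q=0$. The statement of Theorem~2.14.B is formally written for $q\ge 1$, but the theorem is obtained from Theorem~2.1 of the same paper by an approximation argument, and part~B of Theorem~2.1 is restated and proved as Theorem~3.10 in~\cite{bally2014} with $q\ge 0$ permitted; hence one can invoke the $q=0$ case directly from Theorem~3.10. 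Third, the original bound features an $L^p$ norm of the Ornstein--Uhlenbeck operator $L\mathcal{X}_i$. By Meyer's inequality (see \cite[Theorem~1.5.1]{nualart}), $\|L\mathcal{X}_i\|_p \leq C_p \|\mathcal{X}_i\|_{2,p,\mathcal{T}}$, so this term is absorbed into the Sobolev factor $(1+\|\mathcal{X}_i\|_{3,\gamma,\mathcal{T}})^a$ at the cost of enlarging the exponent $a$ and the differentiability order $\gamma$.

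The concentration factor $\bigl(\sum_{i=1,2}\Prob{|\mathcal{X}_i-x|<2}\bigr)^b$ and the $\|\mathcal{X}_1-\mathcal{X}_2\|_{2,\gamma,\mathcal{T}}$ prefactor appear already in the Bally--Caramellino estimate and need no separate treatment. The dependence of the constants $C,a,b,\gamma$ on $d$ alone is inherited from the corresponding dependence in~\cite{bally2014}, once one tracks how the exponents aggregate in the three reductions above. The main potential obstacle, and the only nontrivial verification step, is the $q=0$ case, since it is not formally included in the published statement of Theorem~2.14.B; the key observation is that the approximation argument used to derive Theorem~2.14 from Theorem~2.1 is not needed when $q=0$, so one reads the bound off from Theorem~3.10 directly. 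Beyond this point and the aforementioned bookkeeping, no additional analysis is required.
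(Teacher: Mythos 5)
Your proposal reproduces the paper's own derivation of Theorem~\ref{thm:DensityDifference}: the paper obtains this statement from \cite[Theorem~2.14.B]{bally2014} exactly by the three simplifications you list — taking $\mathbf{\Theta}\equiv 1$, taking the derivative order $q=0$, and absorbing the Ornstein--Uhlenbeck term via Meyer's inequality — while invoking Theorem~3.10 of~\cite{bally2014} to justify the $q=0$ case. The only small discrepancy is in the justification of the $q=0$ step: the paper argues that Theorem~2.14.B, being derived from Theorem~2.1.B~$=$~Theorem~3.10 (which allows $q\geq 0$) by an approximation argument, remains valid at $q=0$; you instead assert that the approximation step is unnecessary at $q=0$ and one may read the bound off Theorem~3.10 directly. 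That step in~\cite{bally2014} is there to relax regularity and nondegeneracy assumptions rather than being tied to the value of~$q$, so the paper's phrasing is the safer one; but under the hypotheses of Theorem~\ref{thm:DensityDifference} (both $\mathcal{X}_i$ in $\mathbb{D}^{3,\infty}$ with all negative moments of $\det\sigma_{\mathcal{X}_i}$ finite) the distinction does not affect the conclusion, and your overall argument is sound and essentially identical to the paper's.
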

The independence of $C,a,p$ of $\mathcal{T}$ is important because we will replace $\mathcal{T}$ by a function of $\eps$ converging to $\infty$ as $\eps \to 0$.

Let us fix $\theta$ and $\eps_0$ such that
\begin{align*} 
        2\lone \theta \leq 1,  \quad\text{ and }\quad
       \eps^\frac{1}{2}\theta\log(\eps^{-1})\leq 1, \quad \eps \in(0,\eps_0].
\end{align*}
For all deterministic $T=T(\eps)$ satisfying $1\leq T\leq \theta\log(\eps^{-1})$, we have
\begin{align}\label{eq:choice_of_theta}
    \begin{split}
        \eps e^{2\lj T}\leq\eps e^{2\lone T}\leq 1 \text{ for all }j,\quad\textrm{and}\quad\eps^\frac{1}{2}T\leq 1, \quad\quad \eps \in(0,\eps_0].
    \end{split}
\end{align}
Now, arbitrarily fix such a $T=T(\eps)$. We will use $\mathcal{T}=T(\eps)$ and simply write $\|\cdot \|_{k,p}=\|\cdot \|_{k,p,T(\eps)}$.

In the following, we use $\lesssim$ to omit a positive multiplicative constant independent of~$\eps$ and $T=T(\eps)\in[1,\theta \log (\eps^{-1})]$. Sometimes such a constant will be denoted explicitly but generically as $C$. We also use the bracket $[\cdot]_p= \big(\E{|\cdot|^p}\big)^\frac{2}{p}$ for $p\geq 2$ and note that this bracket satisfies the following properties, by BDG, Cauchy--Schwarz and Minkowski's integral inequalities, for $p \geq 2$: 
\begin{align} \label{eq:pBracket_property_1}
\begin{split}
    &\pbracketBig{\int_{t_1}^{t_2}\mathcal{X}_{l,s}dW^l_s} = \Big(\EBig{\Big|\int_{t_1}^{t_2}\mathcal{X}_{l,s}dW^l_s\Big|^p}\Big)^\frac{2}{p} \lesssim   \Big(\EBig{\Big|\int_{t_1}^{t_2}|\mathcal{X}_{s}|^2 ds\Big|^\frac{p}{2}}\Big)^\frac{2}{p} \\
    &\leq \int_{t_1}^{t_2}\Big(\E{|\mathcal{X}_{s}|^p }\Big)^\frac{2}{p} ds =  \int_{t_1}^{t_2}\pbracket{\mathcal{X}_{s}}ds,
    \end{split}
\end{align}
\begin{align}\label{eq:pBracket_property_2}
    \pbracketBig{\int_{t_1}^{t_2}\mathcal{X}_s ds }=\Big(\EBig{\Big|\int_{t_1}^{t_2}\mathcal{X}_{s}ds\Big|^p}\Big)^\frac{2}{p} \leq \Big(\int_{t_1}^{t_2}
    \big(\E{|\mathcal{X}_{s}|^p}\big)^\frac{1}{p}ds\Big)^2\leq |t_2-t_1|\int_{t_1}^{t_2}\pbracket{\mathcal{X}_s}ds,
\end{align}
\begin{align*}
    \pbracketBig{\Big(\int_{E\subset \R^n}\big|\mathcal{X}_{s^1,s^2,\dots,s^n}\big|^2 ds^1ds^2\dots ds^n\Big)^\frac{1}{2} }\leq \int_{E\subset \R^n}\pbracket{\mathcal{X}_{s^1,s^2,\dots,s^n}} ds^1ds^2\dots ds^n.
\end{align*}
Let $\mathcal{H}=L^2([0,T],\R^d) $. The last property above implies
\begin{align}\label{eq:pBracket_property_3}
    \pbracket{\|\mathcal{X}\|_{\Hil^{\otimes n}}}\leq  \int_{[0,T]^n}\pbracket{\mathcal{X}_{s^1,s^2,\dots,s^n}}ds^1ds^2\dots ds^n, \quad n\in\mathbb{N}\setminus\{0\},
\end{align}
where $\Hil^{\otimes n}$ is the $n$-fold tensor product of $\Hil$ and $\mathcal{X}$ is an $\Hil^{\otimes n}$-valued random variable.

In the following, we fix an arbitrary $p\geq 2$, and use the above properties.

\subsection{Estimates of Malliavin derivatives}
The formulae for Malliavin derivatives of a solution of an SDE can be found in \cite[Section 2.2.2]{nualart}.  We will use them without further notice. 

\begin{Rem}
\label{rem:smooth-coef}
In \cite[Section~2.2.2]{nualart}, the coefficients of the SDE are required to 
be~$C^\infty$ in order to compute Malliavin derivatives of all orders but here we need to work only with  Malliavin derivatives up to order~$3$, and our assumptions on smoothness of the coefficients are sufficient.
\end{Rem}
\smallskip

Let $N_t = U_t - Z_t$ and $H(\cdot) = F(\cdot)-F(0)$. By \eqref{eq:modified_F}, there are $C_{H,1}, C_{H,2}>0$ such that, 
\begin{align} \label{eq:Lipschitz_H}
\begin{split}
    |H(x)|\leq C_{H,1}|x| , \quad
    |H(x)|\leq C_{H,2},\quad \quad x\in \R^d.
    \end{split}
\end{align}
For $0\leq r\leq t\leq T$, by \eqref{eq:Y_after_Duhamel} and \eqref{eq:def_Z}, easy calculations yield
\begin{align}\label{eq:BasicDerivativeCalc}
\begin{split}
       & N^i_t = \int_0^{t}e^{-\li s}H^i_l(Y_s)dW^l_s+\eps V^i_t; \quad \D^j_r Z^i_t = e^{-\li r}F^i_j(0); \\
    & \D^j_r Y^i_t = \eps e^{\li t}(\D^j_r U^i_t ) =\eps  e^{\li t}(\D^j_r N^i_t + \D^j_r Z^i_t) .
\end{split}
\end{align}

\subsubsection{$0$th order derivatives} 
For some $\beta\in (0,1)$ to be chosen later, we define the stopping times $\eta_k = \inf\{t>0:|Y^k_t|\geq \eps^\beta\}$ and $\eta = \min_{1\leq k\leq d}\{\eta_k\}$

Using (\ref{eq:Lipschitz_H}), (\ref{eq:BasicDerivativeCalc}), and the boundedness of $V^i_t$ we have
\begin{align}\label{eq:pre_0th_derivative_estimate}
\begin{split}
    \E{|N^i_T|^p}&\lesssim \EBig{\Big|\int_0^T|e^{-\li s}H^i(Y_s)|^2 ds\Big|^\frac{p}{2}} +\eps^p\\
     &\lesssim \EBig{\Big|\int_0^{T\wedge \eta}|e^{-\li s}H^i(Y_s)|^2 ds\Big|^\frac{p}{2}} + \EBig{\Big|\int_{T\wedge \eta}^T|e^{-\li s}H^i(Y_s)|^2 ds\Big|^\frac{p}{2}}+\eps^p\\
     &\lesssim \sum_{k=1}^d\EBig{\Big|\int_{0}^{T\wedge\eta}|e^{-\li s}Y^k_s|^2 ds\Big|^\frac{p}{2}} +  \EBig{\Big|\int_{T\wedge\eta}^T|e^{-\li s}|^2 ds\Big|^\frac{p}{2}}+\eps^p\\
     &\lesssim \sum_{k=1}^d\EBig{\Big|\int_{0}^{T\wedge\eta}|e^{-\li s}\eps^\beta|^2 ds\Big|^\frac{p}{2}}  + \E{e^{-p\li \eta}}+\eps^p\\
     & \lesssim \eps^{p\beta}+  \E{e^{-p\li \eta}}+\eps^p.
\end{split}
\end{align}

By the definition of $\eta_k$ and the relation $\eps^\beta \leq |Y^k_{\eta_k}|=\eps e^{\lk \eta_k}|y^k+U^k_{\eta_k}|$, we have \\ $\eta_k \geq  \frac{1}{\lk}\log(\eps^{\beta-1}|y^k+U^k_{\eta_k}|^{-1})$, which implies that
\begin{align*}
    \E{e^{-p\li \eta}}&\leq \sum_{k=1}^d\E{e^{-p\li \eta_k}}\leq \sum_{k=1}^d \eps^{(1-\beta)p\frac{\li}{\lk}}\E{|y^k+U^k_{\eta_k}|^{p\frac{\li}{\lk}}}\\
    &\lesssim \sum_{k=1}^d \big(\eps^{(1-\beta)}|y|\big)^{p\frac{\li}{\lk}}+ \sum_{k=1}^d \eps^{(1-\beta)p\frac{\li}{\lk}}\E{|U^k_{\eta_k}|^{p\frac{\li}{\lk}}}.
\end{align*}
Note that any positive moment of $U_{\eta_k}$ is bounded by an absolute constant independent of $\eps$. Recall the definition of $\pp$ given in \eqref{eq:def_pp}. Then, in view of the above display and \eqref{eq:pre_0th_derivative_estimate}, for an arbitrary $\upsilon\in(0,1)$ we can choose $\beta = \frac{1}{2}\upsilon$ so that there is $\delta_0$ independent of $p$ such that
\begin{align}\label{eq:0th_derivative__norm_est}
    \pbracket{N^i_T} \lesssim \eps^{\delta_0}\big(1+\pp(\eps^{1-\upsilon}|y|)\big)^2,\quad i=1,2,\dots, d, \quad \eps \in(0,\eps_0].
\end{align}

\subsubsection{$1$st order derivatives}

Consider $ r\leq t\leq T$. Before estimating $\D^j_r N^i_t$, we first study $\D^j_r U^i_t$. Observe that 
\begin{align}\label{eq:formula_of_1st_derivative_U}
    \D^j_rU^i_t = e^{-\li r}F^i_j(Y_r) + \eps \int_r^te^{(\lk -\li)s}\partial_k F^i_l(Y_s)\D^j_rU^k_sdW^l_s+\eps^2\int_r^t e^{(\lk-\li)s}\partial_k G^i(Y_s)\D^j_r U^k_s ds.
\end{align}
Hence, using the boundedness of the derivatives of $F$ and $G$ due to \eqref{eq:Y_after_Duhamel}, the $[\cdot]_p$ properties \eqref{eq:pBracket_property_1} and \eqref{eq:pBracket_property_2}, and lastly \eqref{eq:choice_of_theta}, we have 
\begin{align*}
    \pbracket{\D^j_r U^i_t} & \lesssim \pbracket{e^{-\li r}F^i_j(Y_r)} + \pbracketBig{\eps \int_r^te^{(\lk -\li)s}\partial_k F^i_l(Y_s)\D^j_rU^k_sdW^l_s}\\
    &+\pbracketBig{\eps^2\int_r^t e^{(\lk-\li)s}\partial_k G^i(Y_s)\D^j_r U^k_s ds}\\
    & \lesssim e^{-2\li r} + (\eps^2+\eps^4 T) \sum_{k=1}^d\int_r^te^{2(\lk -\li)s}\pbracket{\D^j_rU^k_s}ds\\
    &\lesssim e^{-2\li r} + \eps^2 \sum_{k=1}^d\int_r^te^{2(\lk -\li)s}\pbracket{\D^j_rU^k_s}ds. 
\end{align*}
We fix $j, r$ momentarily and set $a^i(t) = \pbracket{\D^j_r U^i_t}$, obtaining a system of inequalities
\begin{align}\label{eq:common_sys_inequalities}
    a^i(t) \lesssim  e^{-2\li r} +  \eps^2 \sum_{k=1}^d\int_r^te^{2(\lk -\li)s}a^k(s)ds, \quad i=1,2,...,d.
\end{align}
This type of systems will occur a few more times. So, it is  useful to state the following bound  proved in Section \ref{section:auxiliary_lemmas}:

\begin{Lemma} \label{lemma:solve_inequality_system}
Let $d\in\N$ and $m\geq 0$. Then the system of inequalities
\begin{align}\label{eq:system_of_ineq}
    0\leq a^i(t) \lesssim  \eps^me^{-2\li r} +  \eps^2 \sum_{k=1}^d\int_r^te^{2(\lk -\li)s}a^k(s)ds, \quad t\in [r,T],\ i=1,2,...,d,
\end{align}
with $T$ and $\eps\in(0,\eps_0]$ satisfying (\ref{eq:choice_of_theta}), implies that there is a constant $C$ independent of $\eps,\ T,$ and $r$ such that $a^i(t)\le C\eps^m e^{-\lam_d r}$ for all
$t\in[r,T]$ and $i=1,2,\dots,d$.
\end{Lemma}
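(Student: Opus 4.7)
The plan is to reduce the coupled integral inequality system to a single scalar inequality by taking a maximum over indices, then exploiting the smallness of the integrated kernels that follows from the scaling condition \eqref{eq:choice_of_theta}.

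First, I would set $M(t) = \max_{1 \leq i \leq d} \sup_{s \in [r,t]} a^i(s)$, which is finite in all intended applications since the $a^i$ are continuous Sobolev $[\cdot]_p$-norms of diffusion-type processes with bounded moments (if desired one can first justify finiteness on a compact interval by a short-time continuity argument). Substituting $a^k(s) \leq M(t)$ into the hypothesis and using $\li \geq \lam_d$ to bound the inhomogeneous term yields
\[
a^i(t) \lesssim \eps^m e^{-\lam_d r} + M(t) \sum_{k=1}^d \eps^2 \int_r^t e^{2(\lk - \li)s}\,ds.
\]

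Next, I would show that $\eps^2 \int_r^t e^{2(\lk - \li)s}\,ds$ is uniformly small for every pair $(i,k)$ by a three-case analysis. When $\lk > \li$, bound the integral by $e^{2(\lk-\li)T}/(2(\lk-\li))$ and factor as $\eps^2 e^{2(\lk-\li)T} = \eps \cdot (\eps e^{2\lk T}) \cdot e^{-2\li T}$; the first part of \eqref{eq:choice_of_theta} gives a bound of order $\eps$. When $\lk = \li$, the integral is at most $T$, so the prefactor is $\eps^2 T \leq \eps^{3/2}$ by the second part of \eqref{eq:choice_of_theta}. When $\lk < \li$, the integral is bounded by $1/(2(\li-\lk))$, giving a prefactor of order $\eps^2$. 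Summing over $k$ produces
\[
\sum_{k=1}^d \eps^2 \int_r^t e^{2(\lk-\li)s}\,ds \leq C \eps^{1/2},
\]
uniformly in $i$, $r$, $t \in [r,T]$, and $\eps \in (0, \eps_0]$.

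Taking the maximum over $i$ and supremum over $s \in [r,t]$ in the resulting pointwise bound yields
\[
M(t) \leq C_1 \eps^m e^{-\lam_d r} + C_2 \eps^{1/2} M(t),
\]
and choosing $\eps_0$ (possibly smaller) so that $C_2 \eps^{1/2} \leq 1/2$ absorbs the last term into the left-hand side, giving $M(t) \leq 2 C_1 \eps^m e^{-\lam_d r}$, which is the claim with $C = 2C_1$. The only slightly delicate step is the case analysis for the integrated kernel, where the two distinct parts of \eqref{eq:choice_of_theta} are needed for two distinct regimes (one controls the unstable off-diagonal piece $\lk > \li$, the other the neutral diagonal piece $\lk = \li$); the rest is standard.
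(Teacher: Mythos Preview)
Your argument is correct and reaches the same conclusion, but the route differs from the paper's. The paper sums the $a^i$ to form $b(t)=\sum_i a^i(t)$, crudely bounds every kernel by $e^{2(\lk-\li)s}\le e^{2\lambda_1 s}$, and applies Gronwall's inequality to obtain $b(t)\lesssim \eps^m e^{-2\lambda_d r}\exp\big(c\,\eps^2 e^{2\lambda_1 T}\big)$; the first half of \eqref{eq:choice_of_theta} then makes the exponential factor bounded. You instead pass to a running maximum, do a three-case estimate on each integrated kernel, and absorb. Your case analysis is sharper (it shows the total kernel contribution is in fact $O(\eps)$, not merely $O(\eps^{1/2})$) and uses both halves of \eqref{eq:choice_of_theta}, whereas the paper's Gronwall argument only needs the first half. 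The trade-off is that the paper's version works for every $\eps\in(0,\eps_0]$ as stated, while your absorption step needs $C_2\eps^{1/2}<1$ and hence a possibly smaller threshold; you flag this, and in the context of the paper it is harmless, but if you want to match the lemma verbatim you could replace the absorption by Gronwall on $\max_i a^i(t)$ after bounding $\sum_k e^{2(\lk-\li)s}\le d\,e^{2\lambda_1 s}$, which removes the need to shrink $\eps_0$ and also dispenses with the a priori finiteness assumption on $M(t)$.
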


Applying this lemma to (\ref{eq:common_sys_inequalities}), we obtain
\begin{equation}
\label{eq:pbracket-of-DjrUit}
\pbracket{\D^j_r U^i_t} = a^i(t) \le C e^{-2\lam_d r}, \quad r\le t\leq T, \quad p \geq 2, \quad \eps \in(0,\eps_0],
\end{equation}
 which gives, by \eqref{eq:choice_of_theta} and \eqref{eq:pBracket_property_3},
\begin{align}\label{eq:1st_Derivative_U}
    \begin{split}
    \pbracket{\|\D U_T \|_{\Hil}}& \leq \int_0^T\pbracket{\D_rU_T}dr \lesssim \sum_{i,j=1}^d\int_0^T\pbracket{\D^{j}_{r}U^i_T}dr \lesssim  1, \quad p \geq 2.
    \end{split}
\end{align}
The following estimate implied by \eqref{eq:choice_of_theta} and \eqref{eq:pbracket-of-DjrUit} will be used later:
\begin{align}\label{eq:1st_Y_derivative_estimate}
    \begin{split}
        \pbracket{\D^j_r Y^i_t}&=  \eps^2 e^{2\li t}\pbracket{\D^j_r U^i_t} \lesssim \eps^2 e^{2\lam_1 t} e^{-2\lam_d r} \lesssim \eps, \quad r\le t\leq T, \quad p \geq 2, \quad \eps \in(0,\eps_0].
    \end{split}
\end{align}
Then we proceed to estimating $\D^j_rN^i_t$. The calculation (\ref{eq:BasicDerivativeCalc}) gives, for $r\leq t\leq T$,
\begin{align*}
    \D^j_rN^i_t =e^{-\li r}H^i_j(Y_r) +  \int_r^te^{ -\li s}\partial_k H^i_l(Y_s)\D^j_r Y^k_sdW^l_s+\eps\int_r^t e^{-\li s}\partial_k G^i(Y_s)\D^j_r Y^k_s ds,
\end{align*}
which implies
\begin{align*}
    &\pbracket{\|\D N_T\|_\Hil} \lesssim \sum_{i=1}^d \bigg( \EBig{\Big|\int_0^T|e^{-\li r}H^i(Y_r)|^2 dr \Big|^\frac{p}{2}}\bigg)^\frac{2}{p} \\
    & + \sum_{i,j, k = 1}^d \bigg[\Big\|\int_\cdot^T e^{ -\li s}\partial_k H^i_l(Y_s)\D^j_\cdot Y^k_sdW^l_s\Big\|_\Hil\bigg]_p \\
    & + \sum_{i,j, k = 1}^d \eps^2\bigg[\Big\| \int_\cdot^T e^{ -\li s}\partial_k G^i(Y_s)\D^j_\cdot Y^k_sds\Big\|_\Hil\bigg]_p.
\end{align*}
The terms in the first sum of the above display appeared in (\ref{eq:pre_0th_derivative_estimate}), and thus are $\lesssim \eps^{\delta_0}\big(1+\pp(\eps^{1-\upsilon}|y|)\big)^2$.
For the next two sums, we first invoke properties \eqref{eq:pBracket_property_1}, \eqref{eq:pBracket_property_2} and \eqref{eq:pBracket_property_3}, and then apply \eqref{eq:1st_Y_derivative_estimate} and  (\ref{eq:choice_of_theta}) to get 
\begin{align*}
    &\bigg[\Big\|\int_\cdot^T e^{ -\li s}\partial_k H^i_l(Y_s)\D^j_\cdot Y^k_sdW^l_s\Big\|_\Hil\bigg]_p \leq \int_0^T \bigg[\int_r^T e^{ -\li s}\partial_k H^i_l(Y_s)\D^j_r Y^k_sdW^l_s\bigg]_pdr\\
    &\lesssim \int_0^T \int_r^T \sum_{l}\big[e^{ -\li s}\partial_k H^i_l(Y_s)\D^j_r Y^k_s\big]_p ds dr \lesssim \int_0^T \int_r^T e^{-2\li s}\pbracket{\D^j_r Y^k_s} ds\, dr \lesssim \eps T \leq \eps^\frac{1}{2}
\end{align*}
and similarly,
\begin{align*}
        \eps^2\bigg[\Big\| \int_\cdot^T e^{ -\li s}\partial_k G^i(Y_s)\D^j_\cdot Y^k_sds\Big\|_\Hil\bigg]_p  \lesssim \eps^2 T\int_0^T \int_r^T e^{-2\li s}\pbracket{\D^j_r Y^k_s} ds\, dr \leq \eps^2.
\end{align*}

Therefore, we conclude that for some $\delta_1>0$,
\begin{align}\label{eq:1st_derivative_norm_est}
    \pbracket{\|\D N_T\|_\Hil} \lesssim \eps^{\delta_1}\big(1+\pp(\eps^{1-\upsilon}|y|)\big)^2, \quad p\geq 2, \quad \eps \in(0,\eps_0].
\end{align}

\subsubsection{$2$nd order derivatives}
Since $2$nd order derivatives of $Z_t$ vanish as can be seen in (\ref{eq:BasicDerivativeCalc}), we have 
\begin{equation}
\label{eq:DN=DU}
\D^{(2)} N_t = \D^{(2)} U_t,
\end{equation}
 where the superscript indicates the order of differentiation. So we only need to study the latter. 

Let us rewrite~\eqref{eq:Y_after_Duhamel} as 
\begin{align*}U^j_t&=\int_0^t e^{-\lj s}F^j_l(Y_s)dW^l_s+\eps\int^t_0e^{-\lj s}G^j(Y_s)ds\\
\notag  
   &=\int_0^t e^{-\lj s}F^j_l\big(\eps e^{\lambda s}(y+U_s)\big)dW^l_s+\eps\int^t_0e^{-\lj s}G^j\big(\eps e^{\lambda s }(y+U_s)\big)ds   
\end{align*}
and apply formula (2.54) in~\cite[Section 2.2]{nualart} to this equation
in place of equation~(2.37) therein.  For $r_1, r_2\leq t \leq T$, we obtain
\begin{align*}
     &\D^{j_1,j_2}_{r_1,r_2}U^i_t \quad =\quad  e^{-\li r_1}\partial_k F^i_{j_1}(Y_{r_1})\D^{j_2}_{r_2}Y^k_{r_1}+e^{-\li r_2}\partial_k F^i_{j_2}(Y_{r_2})\D^{j_1}_{r_1}Y^k_{r_2} \\
    & + \int_{r_1\vee r_2}^t e^{-\li s }\big(\partial^2_{k_1,k_2} F^i_l(Y_s)\big)(\D^{j_1}_{r_1}Y^{k_1}_s)(\D^{j_2}_{r_2}Y^{k_2}_s)dW^l_s + \eps\int_{r_1\vee r_2}^t e^{(\lk-\li) s }\partial_{k} F^i_l(Y_s)\D^{j_1,j_2}_{r_1,r_2}U^k_s  dW^l_s\\
    & +  \eps\int_{r_1\vee r_2}^t e^{-\li s }\big(\partial^2_{k_1,k_2} G^i(Y_s)\big)(\D^{j_1}_{r_1}Y^{k_1}_s)(\D^{j_2}_{r_2}Y^{k_2}_s)ds + \eps^2\int_{r_1\vee r_2}^t e^{(\lk-\li) s }\partial_{k} G^i(Y_s)\D^{j_1,j_2}_{r_1,r_2}U^k_s  ds.
\end{align*} 
Here we choose to express some terms only in terms of the process $Y$ while some terms are expressed in terms of both $U$ and $Y$ (we recall that by~\eqref{eq:Y_after_Duhamel} $Y_t^j=\eps e^{\lambda_j t}(y^j+ U^j_t)$).

This, along with (\ref{eq:pBracket_property_1}), \eqref{eq:pBracket_property_2}, the Cauchy--Schwarz inequality  and the boundedness of derivatives of $F$ and $G$, implies that
\begin{align}\label{eq:treatment_of_2nd_derivative}
    \begin{split}
        \pbracket{\D^{j_1,j_2}_{r_1,r_2}U^i_t} &\lesssim e^{-2\li r_1}\pbracket{\D^{j_2}_{r_2}Y^k_{r_1}} + e^{-2\li r_2}\pbracket{\D^{j_1}_{r_1}Y^k_{r_2}}  \\
    &+ (1+\eps^2T)\sum_{k_1,k_2 = 1}^d\int_{r_1\vee r_2}^t e^{-2\li s } \pbracketX{\D^{j_1}_{r_1}Y^{k_1}_s}{2p} \pbracketX{\D^{j_2}_{r_2}Y^{k_2}_s}{2p} ds\\
    &+(\eps^2+\eps^4T) \sum_{k=1}^d  \int_{r_1\vee r_2}^t e^{2(\lk-\li) s }\pbracket{\D^{j_1,j_2}_{r_1,r_2}U^k_s}  ds.
    \end{split}
\end{align}
Let us temporarily fix $j_1,j_2,r_1,r_2$ and set $a^i(t) = \pbracket{\D^{j_1,j_2}_{r_1,r_2}U^i_t} $ and $r=r_1\wedge r_2$. Then, using \eqref{eq:choice_of_theta} and (\ref{eq:1st_Y_derivative_estimate}) for $p$ and $2p$, from the above display we obtain
\begin{align*}
    a^i(t) &\lesssim  e^{-2\li r_1}\eps +  e^{-2\li r_2} \eps + \int_{r_1\vee r_2}^t e^{-2\li s } \eps^2 ds + \eps^2 \sum_{k=1}^d  \int_{r_1\vee r_2}^t e^{2(\lk-\li) s }a^k(s)  ds \\
    & \lesssim \eps e^{-2\li r} + \eps^2 \sum_{k=1}^d  \int_{r}^t e^{2(\lk-\li) s }a^k(s)  ds,
\end{align*}
which by Lemma \ref{lemma:solve_inequality_system} implies
\begin{align}\label{eq:2nd_derivative_U}
     \pbracket{\D^{j_1,j_2}_{r_1,r_2}U^i_t} &= a^i(t)\le C \eps e^{-2\lam_d r} , \quad r_1,r_2\le t \leq T, \quad p \geq 2, \quad \eps \in(0,\eps_0].
\end{align}
This, along with \eqref{eq:Y_after_Duhamel} and (\ref{eq:choice_of_theta}) implies the following estimate  which will used  
 later:
\begin{align} \label{eq:2nd_Y_derivative_estimate}
     \pbracket{\D^{j_1,j_2}_{r_1,r_2}Y^i_t} =\eps^2 e^{2 \li t}\pbracket{\D^{j_1,j_2}_{r_1,r_2}U^2_i}  \lesssim \eps^3 e^{2\lam_1 T}  \lesssim \eps^2, \quad r_1,r_2\le t \leq T, \quad p \geq 2, \quad \eps \in(0,\eps_0]. 
\end{align}
Lastly we obtain, by \eqref{eq:pBracket_property_3}, \eqref{eq:DN=DU} and  (\ref{eq:2nd_derivative_U}),
\begin{align}\label{eq:2nd_derivative_norm_est}
    \begin{split}
        \pbracket{\|\D^{(2)} N_T \|_{\Hil^{\otimes 2} }} & = \pbracket{\|\D^{(2)} U_T \|_{\Hil^{\otimes  2}}} \lesssim  \sum_{i,j_1,j_2=1}^d\int_{[0,T]^2}\pbracket{\D^{j_1,j_2}_{r_1,r_2}U^i_T}dr_1dr_2\\
    &\lesssim \eps \sum_{i=1}^d\int_{[0,T]^2} e^{-2\lam_d r_1\wedge r_2} dr_1dr_2 \lesssim \eps T \leq \eps^\frac{1}{2}, \quad p \geq 2.
    \end{split}
\end{align}

\subsubsection{$3$rd order derivatives} 
Similarly to the above argument for second order derivatives, we apply
~(2.54) from~\cite[Section~2.2]{nualart} to obtain that
for $r_1,r_2,r_3\leq t\leq T$,
\begin{align*}
    &\D^{j_1,j_2,j_3}_{r_1,r_2,r_3}U^i_t \\
    &= \frac{1}{2}\sum_{\{n_0,n_1,n_2\} = \{1,2,3 \}}e^{-\li r_{n_0}}\bigg(\partial^2_{k_1,k_2}F^i_{j_{n_0}}(Y_{r_{n_0}})\prod_{m=1}^2\D^{j_{n_m}}_{r_{n_m}}Y^{k_m}_{r_{n_0}}  + \partial_{k}F^i_{j_{n_0}}(Y_{r_{n_0}})\D^{j_{n_1},j_{n_2}}_{r_{n_1,n_2}}Y^{k}_{r_{n_0}} \bigg) \\
    & + \int_{r_1\vee r_2\vee r_3}^te^{-\li s}\bigg(\partial^3_{k_1,k_2,k_3}F^i_l(Y_s)\prod_{m=1}^3\D^{j_{m}}_{r_{m}}Y^{k_m}_s\\
    & +  \frac{1}{2}\sum_{\{n_1,n_2,n_3\} = \{1,2,3\}}\partial^2_{k_1,k_2}F^i_l(Y_s)(\D^{j_{n_1},j_{n_2}}_{r_{n_1},r_{n_2}}Y^{k_1}_s)(\D^{j_{n_3}}_{r_{n_3}}Y^{k_2}_s) + \eps e^{\lk s} \partial_{k}F^i_l(Y_{s})\D^{j_{1},j_{2},j_{3}}_{r_{1},r_{2},r_{3}}U^{k}_s\bigg)dW^l_s\\&+\eps\Big(\text{a similar integral  with $F^i_l$ and $ dW^l_s$ replaced by $G^i $ and $ds$, respectively}\Big),
\end{align*}
where the factor of $1/2$ comes from counting certain terms twice.
Let us temporarily fix $j_1, j_2, j_3,r_1,r_2,r_3$ and set  $a^i(t)=\pbracket{\D^{j_1,j_2,j_3}_{r_1,r_2,r_3}U^i_t} $ and $r= r_1 \wedge r_2 \wedge r_3$. Then, similarly to~\eqref{eq:treatment_of_2nd_derivative}, using H\"older's inequality, the $[\cdot]_p$ properties \eqref{eq:pBracket_property_1}, \eqref{eq:pBracket_property_2}, estimates  (\ref{eq:1st_Y_derivative_estimate}) and (\ref{eq:2nd_Y_derivative_estimate}) for $p,2p, 3p$, and lastly \eqref{eq:choice_of_theta}, we obtain 
\begin{align*}
    a^i(t) & \lesssim e^{-2\li r}\Big(\eps^2 + \eps^2 \Big) + \int_r^t e^{-2\li s}\Big(\eps^3 + \eps^3 + \sum_{k=1}^d\eps^2 e^{2\lk s} a^k(s) \Big) ds\\
    & \lesssim \eps^2 e^{-2\li r} + \eps^2 \sum_{k=1}^d \int_r^t e^{2(\lk-\li) s}a^k(s)ds,
\end{align*} 
which by
Lemma \ref{lemma:solve_inequality_system} yields
\begin{align*}
     \pbracket{\D^{j_1,j_2,j_3}_{r_1,r_2,r_3}U^i_t} &= a^i(t)\le C \eps^2 e^{-2\lam_d r}, \quad r_1,r_2,r_3\le t \leq T, \quad p \geq 2.
\end{align*}
Finally, by \eqref{eq:pBracket_property_3} and  (\ref{eq:choice_of_theta}) we have, with $r=r_1\wedge r_2\wedge r_3$,
\begin{align}\label{eq:3rd_derivative_norm_est}
    \begin{split}
        &\pbracket{\|\D^{(3)} U_T \|_{\Hil^{\otimes 3}}}   \lesssim  \sum_{i,j_1,j_2,j_3=1}^d\int_{[0,T]^3}\pbracket{\D^{j_1,j_2,j_3}_{r_1,r_2,r_3}U^i_T}dr_1dr_2dr_3\\
    &\lesssim \eps^2 \sum_{i=1}^d\int_{[0,T]^3} e^{-2\lam_d r}dr_1dr_2dr_3\lesssim \eps^2 T^2 \leq \eps, \quad p\geq 2, \quad \eps \in(0,\eps_0].
    \end{split}
\end{align}
\subsubsection{Conclusion of derivative estimates}
Combining estimates \eqref{eq:0th_derivative__norm_est}, \eqref{eq:1st_derivative_norm_est}, \eqref{eq:2nd_derivative_norm_est},  and Jensen's inequality, we obtain, for each $p\geq 1$, all $\eps \in(0,\eps_0]$,
\begin{align} \label{eq:Derivative_difference_est}
    \|U_T-Z_T \|_{2,p} = \|N_T \|_{2,p}=\pbracket{N_T}^\frac{1}{2}+ \sum^2_{k=1}\pbracket{\|\D^kN_T\|_{\Hil^{\otimes k}}}^\frac{1}{2}\lesssim \eps^{\delta}\big(1+\pp(\eps^{1-\upsilon}|y|)\big)
\end{align}
for some $\delta >0 $.

By \eqref{eq:1st_Derivative_U}, \eqref{eq:2nd_derivative_U}, \eqref{eq:3rd_derivative_norm_est}, Jensen's inequality and the easy observation that all moments of $U_t$ are bounded uniformly in $t$, we have
\begin{align}\label{eq:Sobolev_norm_bound_U_T}
    \|U_T\|_{3,p}\lesssim 1,\quad p\geq 1, \quad \eps\in(0,\eps_0].
\end{align}
Lastly, a simple calculation shows that 
\begin{align}\label{eq:Sobolev_norm_bound_Z_T}
    \|Z_T\|_{3,p}\lesssim 1, \quad  p\geq 1, \quad \eps\in(0,\eps_0].
\end{align}

\subsection{Negative moments for determinants of Malliavin matrices \texorpdfstring{$\sigma_{U_T}$}{sigma\_\{U\_T\}} and~\texorpdfstring{$\sigma_{Z_T}$}{sigma\_\{Z\_T\}}} 
The goal is to show for each $p\geq 1$ there is a $C_p>0$ such that
\begin{align}\label{eq:goal_est_det_malliavin_covariance}
    \EE|\det \sigma_{U_T}|^{-p},\  \EE|\det \sigma_{Z_T}|^{-p} \leq C_p,  \quad\eps\in(0,\eps_0].
\end{align}

Using the formula of $\D Z_t$ in \eqref{eq:BasicDerivativeCalc} and that $F(0)=\sigma(0)$ is of full rank, it is easy to verify \eqref{eq:goal_est_det_malliavin_covariance} for $ \sigma_{Z_T}$ as it is deterministic. For $\sigma_{U_T}$, we first simplify the expression for $\D^j_rU^i_t$ in \eqref{eq:formula_of_1st_derivative_U}. Let
\begin{gather}\label{eq:def_A_Abar_Bbar}
    A^i_j(r)=e^{-\li r}F^i_j(Y_r), \quad \bA^i_{k,l}(s)=\eps e^{(\lk-\li)s}\partial_k F^i_l(Y_s),\quad 
    \bB^i_k(s)=\eps^2 e^{(\lk-\li)s}\partial_k G^i(Y_s).
\end{gather}
Then, we can rewrite \eqref{eq:formula_of_1st_derivative_U} as
\begin{align*}
    \D^j_rU^i_t = A^i_j(r) + \int_r^t \bA^i_{k,l}(s)\D^j_rU^k_sdW^l_s +\int_r^t \bB^i_k(s)\D^j_rU^k_s ds.
\end{align*}
By the boundedness of derivatives of $F$, $G$ and \eqref{eq:choice_of_theta}, we have, for some $C>0$, 
\begin{align}\label{eq:bound_A_bA_bB}
    |A^i_j(s)|\leq Ce^{-\li s},\quad |\bA^i_{k,l}(s)|\leq C\eps^\frac{1}{2},\quad |\bB^i_k(s)|\leq C\eps^\frac{3}{2}, \quad s\leq T, \quad \eps\in(0,\eps_0].
\end{align}
Let us introduce two $d\times d$-matrix-valued processes, where $\delta^i_j$ is the Kronecker delta,
\begin{align}\label{eq:def_YY_ZZ}
    \begin{split}
        \YY^i_j(t)&=\delta^i_j + \int_0^t\Big(\bA^i_{k,l}(s)\YY^k_j(s)dW^l_s+\bB^i_k(s)\YY^k_j(s)ds\Big),\\
        \ZZ^i_j(t)&= \delta^i_j -\int_0^t \Big(\bA^k_{j,l}\ZZ^i_k(s)dW^l_s+\big(\bB^k_j(s)-\sum_{l=1}^d\bA^k_{m,l}(s)\bA^m_{j,l}(s)\big)\ZZ^i_k(s)ds\Big). 
    \end{split}
\end{align}
These two processes correspond to (2.57) and (2.58) in \cite[Section 2.3.1]{nualart}. The computations below (2.58) there show $\ZZ(t)\YY(t)=\YY(t)\ZZ(t)=I$ the identity matrix. In addition, (2.60) and (2.61) from \cite[Section 2.3.1]{nualart} state that $\sigma_{U_t}$ satisfies
\begin{align}\label{eq:sigma_U_T_formula}
    \begin{split}
        \sigma_{U_t}=\YY(t)\CC_t\YY(t)^\intercal
    \end{split}
\end{align}
where $\intercal$ denotes the matrix transpose operation and 
\begin{align}\label{eq:def_of_C_t}
    \CC^{ij}_t=\sum_{l=1}^d \int_0^t \ZZ^i_k(s)A^k_l(s)\ZZ^j_m(s)A^m_l(s)ds.
\end{align}

Then, observe that, by \eqref{eq:sigma_U_T_formula} and H\"older's inequality, 
\begin{align}\label{eq:holder_ineq_det_malliavin_covariance}
    \begin{split}
        \EE|\det \sigma_{U_T}|^{-p} \leq \big(\EE|\det \CC_T|^{-2p}\big)^\frac{1}{2}\big(\EE|\det \ZZ(T)|^{4p}\big)^\frac{1}{2}, \quad p\geq 1.
    \end{split}
\end{align}
Therefore, to prove boundedness of $\EE|\det \sigma_{U_T}|^{-p}$, it suffices to prove that it holds for $\EE|\det \CC_T|^{-2p}$ and $\EE|\det \ZZ(T)|^{4p}$. We first bound the latter. 

Although it is more than what we need here, we shall find a bound on moments of $\tZZ(T)$ with $\tZZ^i_j(t)= \sup_{0\leq r\leq t}|\ZZ^i_j(t)| $, which will be used later.
By \eqref{eq:bound_A_bA_bB}, we have
\begin{align*}
    \tZZ^i_j(T) \lesssim 1 + \sup_{0\leq r\leq T}\Big|\int_0^r \bA^k_{j,l}\ZZ^i_k(s)dW^l_s\Big|+\int_0^T \eps \sum_{k=1}^d\tZZ^i_k(s)ds.
\end{align*}
Then, using BDG inequality, the $[\cdot]_p$ properties \eqref{eq:pBracket_property_1} and \eqref{eq:pBracket_property_2},  \eqref{eq:choice_of_theta} and \eqref{eq:bound_A_bA_bB}, we obtain,
for all $p\geq 2$ and $\eps\in(0,\eps_0]$,
\begin{align*}
        \pbracket{\tZZ^i_j(T)}&\lesssim 1+\sum_{k=1}^d\int_0^T\big(\eps + \eps^2T\big)\pbracket{\tZZ^i_k(s)}ds \lesssim 1+\eps \sum_{k=1}^d\int_0^T\pbracket{\tZZ^i_k(s)}ds.
\end{align*}
Summing up the above in $j$ and using Gronwall's inequality, we get, for some $c>0$,
\begin{align}\label{eq:p-moment_est_ZZ_and_tZZ}
    \pbracket{\tZZ^i_j(T)} \leq \sum_{k=1}^d \pbracket{\tZZ^i_k(T)} \lesssim e^{c\eps T}\lesssim  1,\quad p\geq 2, \quad \eps \in(0,\eps_0].
\end{align}
Using this and the expression of the matrix determinant as a polynomial of the entries, we apply H\"older's inequality to conclude that for each $p\geq 1$ there is $C_p>0$ such that $\big(\EE|\det \ZZ(T)|^{4p}\big)^\frac{1}{2}\leq C_p,$ $\eps \in(0,\eps_0]$.

\medskip

To bound $\EE|\det \CC_T|^{-2p}$ for all $p\geq1$, it suffices to show that, for each $p\geq 1$, there is $C_p>0$ such that $\Prob{\nu \leq \zeta}\leq C_p \zeta^p$, where $\nu$ is the smallest eigenvalue of $\CC_T$. Note that $\nu\geq 0$, because $\CC_T$  is positive semi-definite, which can be derived from \eqref{eq:sigma_U_T_formula} since $\sigma_{U_T}$ is positive semi-definite and $\YY(T)$ is invertible. We need the following lemma which will be proved in Section \ref{section:auxiliary_lemmas}.
\begin{Lemma}\label{lemma:smallest_eigenvalue}
Let $\mathcal{A}$ be a symmetric positive semi-definite
random $d\times d$ matrix. Let~$\nu$ be its smallest eigenvalue. Then for each $p\geq 1$, there is a constant $C_{p,d}>0$ such that
\begin{align}
\label{eq:smallest_eig}
    \Prob{\nu \leq \zeta}\leq C_{p,d} \bigg(\sup_{|v|=1} \E{|\langle v, \mathcal{A}v\rangle|^{-(p+2d)} }+ \EBig{\Big|\sum_{i,j =1}^d |\mathcal{A}^{ij}|^2\Big|^\frac{p}{2}}\bigg)\zeta^p,\quad \zeta\geq 0.
\end{align}
\end{Lemma}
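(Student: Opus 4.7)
Since $\mathcal{A}$ is symmetric and positive semi-definite, the smallest eigenvalue has the Rayleigh quotient representation
\[
\nu=\inf_{v\in \Sph^{d-1}}\langle v,\mathcal{A}v\rangle,
\]
so my goal is to control this infimum uniformly over the unit sphere using a net/discretization argument combined with a negative-moment estimate at each individual direction. Let $M=\bigl(\sum_{i,j=1}^d|\mathcal{A}^{ij}|^2\bigr)^{1/2}$ denote the Frobenius norm, which controls the operator norm up to a dimensional constant. For $v,v'\in\Sph^{d-1}$, the identity
\[
\langle v',\mathcal{A}v'\rangle-\langle v,\mathcal{A}v\rangle=\langle v'-v,\mathcal{A}(v'+v)\rangle
\]
yields the Lipschitz estimate $|\langle v',\mathcal{A}v'\rangle-\langle v,\mathcal{A}v\rangle|\leq 2M|v-v'|$.

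Next, fix $\eta>0$ and choose an $\eta$-net $\mathcal{N}_\eta\subset \Sph^{d-1}$ of cardinality $|\mathcal{N}_\eta|\leq c_d\,\eta^{-(d-1)}$. The Lipschitz estimate implies
\[
\nu\ \geq\ \min_{v'\in \mathcal{N}_\eta}\langle v',\mathcal{A}v'\rangle-2\eta M,
\]
so for any threshold $M_0>0$,
\[
\{\nu\leq \zeta\}\subset \{M>M_0\}\cup \Bigl\{\min_{v'\in \mathcal{N}_\eta}\langle v',\mathcal{A}v'\rangle\leq \zeta+2\eta M_0\Bigr\}.
\]
The plan is then to bound the first event with Markov's inequality applied to $M^p$ and to bound the second event by the union bound together with Markov's inequality applied to $\langle v',\mathcal{A}v'\rangle^{-q}$ for suitably large $q$.

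The balancing of parameters is the only delicate point. I will choose $M_0=\zeta^{-1}$ and $\eta=\zeta/M_0=\zeta^{2}$, so that $\zeta+2\eta M_0=3\zeta$ and $|\mathcal{N}_\eta|\leq c_d\,\zeta^{-2(d-1)}$. With $q=p+2d$, the union bound combined with Markov yields
\[
\PP\Bigl\{\min_{v'\in \mathcal{N}_\eta}\langle v',\mathcal{A}v'\rangle\leq 3\zeta\Bigr\}
\leq c_d\,\zeta^{-2(d-1)}\cdot(3\zeta)^{q}\sup_{|v|=1}\EE|\langle v,\mathcal{A}v\rangle|^{-q}
\leq C_{p,d}\,\zeta^{p+2}\,\sup_{|v|=1}\EE|\langle v,\mathcal{A}v\rangle|^{-(p+2d)},
\]
while $\PP\{M>M_0\}\leq M_0^{-p}\EE[M^p]=\zeta^{p}\,\EE\bigl[\bigl(\sum_{i,j}|\mathcal{A}^{ij}|^2\bigr)^{p/2}\bigr]$. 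Adding these two estimates produces \eqref{eq:smallest_eig}. The only real obstacle is choosing a single pair $(\eta,M_0)$ that simultaneously delivers the prefactor $\zeta^p$ from both the Frobenius-norm tail and the net cardinality times the negative-moment bound; the choice $M_0=\zeta^{-1}$, $\eta=\zeta^{2}$ with $q=p+2d$ gives some slack ($\zeta^{p+2}$ in the net term), which is more than enough.
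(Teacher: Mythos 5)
Your proof is correct and follows essentially the same strategy as the paper: split on whether the Frobenius norm $|\mathcal{A}|$ exceeds $\zeta^{-1}$, cover $\Sph^{d-1}$ by a net of scale $\sim\zeta^2$, and apply Markov's inequality with exponent $p+2d$ to each direction of the net. The paper phrases the net step as a proof by contradiction rather than via your cleaner Lipschitz estimate, and uses a slightly cruder covering-number bound ($N_d\lesssim\zeta^{-2d}$ rather than your $\zeta^{-2(d-1)}$), which is exactly why your final exponent $\zeta^{p+2}$ comes out with a small amount of slack; these are cosmetic differences, not a different route.
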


For each $p> 1$, by \eqref{eq:bound_A_bA_bB},  \eqref{eq:p-moment_est_ZZ_and_tZZ} and H\"older's inequality, we have
\begin{align*}
    \big(\EE|\CC^{ij}_T|^p\big)^\frac{1}{p}&\leq  \int_0^T \big(\EE|\ZZ^i_k(s)A^k_l(s)\ZZ^j_m(s)A^m_l(s)|^p\big)^\frac{1}{p}ds\\
    &\lesssim \sum_{k,m=1}^d \int_0^T e^{-\lk s}e^{-\lambda_m s}ds \lesssim 1, \quad \eps \in (0,\eps_0].
\end{align*}
Hence, for each $p\geq 1$, there is $c_p>0$ such that $\EBig{\big|\sum_{i,j =1}^d |\CC_T^{ij}|^2\big|^\frac{p}{2}}\leq c_p$, $\eps \in (0,\eps_0]$. Therefore, if we can show that for each $p\geq 1$ there is $C_p$ such that
\begin{align}\label{eq:key_inverse_p_moment_est_for_malliavin_covariance}
    \sup_{|v|=1} \E{|\langle v, \CC_Tv\rangle|^{-p} }\leq C_p,  \quad \eps\in(0,\eps_0],
\end{align}
then Lemma \ref{lemma:smallest_eigenvalue} and the discussion above imply that $\EE|\det \CC_T|^{-p}$ is bounded for each $p\geq 1$, when $\eps$ is small (in comparison with \cite[Lemma 2.3.1]{nualart}, we need a bound that is uniform in $\eps\in(0,\eps_0]$). Consequently, this and  \eqref{eq:holder_ineq_det_malliavin_covariance} will imply the desired result~\eqref{eq:goal_est_det_malliavin_covariance}. Therefore, it remains to show \eqref{eq:key_inverse_p_moment_est_for_malliavin_covariance}.

\begin{proof}[Proof of \eqref{eq:key_inverse_p_moment_est_for_malliavin_covariance}]
Let us fix an arbitrary $v\in \Sph^{d-1}$, the $(d-1)$-sphere. By the definition of $\CC_t$ given in  \eqref{eq:def_of_C_t},
\begin{align*}
    \langle v, \CC_T v\rangle = \int_0^T |v^\intercal \ZZ(s)A(s)|^2 ds.
\end{align*}
Recall $A^i_j(s)$ given in \eqref{eq:def_A_Abar_Bbar}. By \eqref{eq:modified_F}, we have, in the sense of positive semi-definite matrices, $F(Y_s)F(Y_s)^\intercal\geq c_0 I$. Therefore, we get
\begin{align}\label{eq:<v,C_Tv>_geq_c_0_integral}
    \langle v, \CC_T v\rangle \geq  c_0 \int_0^T \Big|\Big(\sum_{i=1}^dv_i \ZZ^i_j(s)e^{-\lj s}\Big)_{j=1}^d\Big|^2 ds, \quad \eps\in(0,\eps_0].
\end{align}
Let us  define
\begin{align}\label{eq:def_R^j_t}
    \begin{split}
        R^j_t&= \sqrt{c_0}\sum_{i=1}^d v_i \ZZ^i_j(t)e^{-\lj t} = r^j_0+M^j_t+A^j_t \\
        &= r^j_0 + \int_0^t u^j_l(s)dW^l_s+\int_0^t a^j(s)ds, \quad j=1,2,\dots,d,
    \end{split}
\end{align}
and additionally $N^j_t=\int_0^t R^j(s)u^j_l(s)dW^l_s, \quad j=1,2,\dots, d$, 
where, by the It\^o formula and the expression for $\ZZ(t)$ given in \eqref{eq:def_YY_ZZ},
\begin{align}
    r_0^j= \sqrt{c_0}v_j, \quad\quad\quad &u^j_l(s)=-\sqrt{c_0}\sum_{i,k=1}^dv_ie^{-\lj s}\bA^k_{j,l}(s)\ZZ^i_k(s),\label{eq:def_r_0}\\
    a^j(s)=-\Big(\sqrt{c_0}\sum_{i=1}^d v_i \lj e^{-\lj s}\ZZ^i_j(s)\Big)-&\Big(\sqrt{c_0}\sum_{i,k,m,l} v_ie^{-\lj s}\big(\bB^k_j(s)-\bA^k_{m,l}(s)\bA^m_{j,l}(s)\big)\ZZ^i_k(s)\Big).\nonumber
\end{align}
Then, \eqref{eq:<v,C_Tv>_geq_c_0_integral} and \eqref{eq:def_R^j_t} imply
\begin{align*}
    \Prob{\langle v, \CC_T v\rangle\leq \zeta} \leq \ProbBig{\int_0^T|R_s|^2ds \leq \zeta}, \quad \eps\in(0,\eps_0].
\end{align*}
Recall that $T=T(\eps)\geq 1$ is assumed. Since $v\in \Sph^{d-1}$ is arbitrary, Lemma \ref{lemma:near_zero_probability_est} 
that we state and prove below implies \eqref{eq:key_inverse_p_moment_est_for_malliavin_covariance}.
\end{proof}
\begin{Lemma}\label{lemma:near_zero_probability_est}
Let $\eps_0$ be given in \eqref{eq:choice_of_theta}, and $R_s$ be given in \eqref{eq:def_R^j_t} which depends on the choice of $v\in \Sph^{d-1}$. For each $p\geq 1$, there is $C_p>0$ independent of $v$ such that
\begin{align}\label{eq:desired_result_lemma_near_zero_probability_est}
    \ProbBig{\int_0^T|R_s|^2ds \leq \zeta}\leq C_p \zeta^{\frac{1}{8}p}, \quad \eps\in(0,\eps_0].
\end{align}
\end{Lemma}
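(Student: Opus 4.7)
The starting point is that $R_0 = r_0 := \sqrt{c_0}\,v$ has $|r_0|^2 = c_0$, so by continuity, on the event $\{\sup_{s\leq h}|R_s - r_0|\leq \sqrt{c_0}/2\}$ we have $|R_s|\geq \sqrt{c_0}/2$ throughout $[0,h]$ and hence $\int_0^h|R_s|^2\,ds \geq c_0 h/4$. Setting $h := 4\zeta/c_0$ (and absorbing the trivial range $\zeta\geq c_0/4$ into $C_p$ so that $h<1\leq T$), one reduces the claim to
\[
\Prob{\int_0^T|R_s|^2\,ds\leq \zeta} \;\leq\; \Prob{\sup_{s\leq h}|R_s - r_0|> \sqrt{c_0}/2}.
\]

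The next step is to decompose $R_s - r_0 = M_s + A_s$ from \eqref{eq:def_R^j_t} and bound each piece in $L^q$, using \eqref{eq:bound_A_bA_bB} (which gives $|\bA|\leq C\eps^{1/2}$ and $|\bB|\leq C\eps^{3/2}$) together with the $\eps$-uniform moment bound \eqref{eq:p-moment_est_ZZ_and_tZZ} on $\ZZ$. For the martingale part, these imply $\|u^j_l(s)\|_q \leq C_q \eps^{1/2} e^{-\lj s}$ for every $q\geq 2$, and Burkholder--Davis--Gundy combined with Minkowski's integral inequality produce $\E \sup_{s\leq h}|M^j_s|^q \leq C_q (\eps h)^{q/2}\leq C_q h^{q/2}$. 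For the drift, the explicit form of $a^j$ yields $\|a^j(u)\|_q\leq C_q(\lj + \eps)e^{-\lj u}$, so Minkowski gives $\|\sup_{s\leq h}|A^j_s|\|_q \leq \int_0^h \|a^j(u)\|_q\,du \leq C_q(1+\eps/\lj)(1-e^{-\lj h}) \leq C_q' h$. Combining via Markov's inequality with threshold $\sqrt{c_0}/(4d)$ and a union bound over $j=1,\dots,d$ yields
\[
\Prob{\sup_{s\leq h}|R_s - r_0|>\sqrt{c_0}/2} \leq C_q(h^{q/2}+h^q) \leq C_q'' h^{q/2} = C_q''' \zeta^{q/2},
\]
and choosing $q = \max(2, p/4)$ delivers the desired bound $C_p\,\zeta^{p/8}$.

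The main delicate point is the uniformity of all constants in $v\in\Sph^{d-1}$ (which comes from $|v|=1$), in $\eps\in(0,\eps_0]$ (the heart of the matter: it relies on the $\eps^{1/2}$ scaling of $\bA$, itself a consequence of the constraint $\eps e^{2\lone T}\leq 1$ built into \eqref{eq:choice_of_theta}, together with \eqref{eq:p-moment_est_ZZ_and_tZZ}), and in $T = T(\eps)$ (automatic, since the whole argument lives on the short initial window $[0,h]\subset[0,1]\subset[0,T]$ whose length depends only on $\zeta$ and $c_0$). No information about the process on the large time scale $T\approx \theta\log\eps^{-1}$ is needed, because the required lower bound on $\int_0^T|R_s|^2\,ds$ is already forced by the behavior of $R$ on the initial interval where it remains close to its nonzero deterministic initial value $r_0$.
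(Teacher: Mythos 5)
Your proof is correct, and it takes a genuinely different route from the paper's. The paper follows the template of \cite[Lemma~2.3.2]{nualart}: it introduces events $B_0$, $B_{1,j}$, $B_{2,j}$ built around quadratic variations of $M^j$ and of the auxiliary martingale $N^j=\int R^j\,u^j_l\,dW^l$, controls them with exponential martingale inequalities over the full window $[0,T]$, and derives a contradiction with $|r_0|=\sqrt{c_0}$ via a delicate chain of pathwise estimates (Markov on the Lebesgue measure of $\{|R_t|\ge\zeta^{1/3}\}$, time-shifting to a nearby $t'$, etc.). You instead observe that since $R_0=r_0$ is deterministic with $|r_0|=\sqrt{c_0}$ independent of $v$, it suffices to localize to the short window $[0,h]$ with $h\asymp\zeta$: if $R$ stays within $\sqrt{c_0}/2$ of $r_0$ there, the integral is already $\ge\zeta$, and the escape probability is controlled by elementary BDG/Minkowski/Markov moment bounds on $M$ and $A$ using \eqref{eq:bound_A_bA_bB} and \eqref{eq:p-moment_est_ZZ_and_tZZ}. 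This is cleaner and makes the uniformity in $v$, $\eps$, and $T$ transparent (the argument never leaves $[0,h]\subset[0,1]$). The paper's heavier machinery is tailored to settings where the ``initial value'' analogue is not deterministically bounded away from zero; here that generality is not needed, and your localization exploits exactly what is special about this $R$. Two very minor points worth tightening in a write-up: the reduction as stated gives $\Prob{\int_0^T|R_s|^2\,ds<\zeta}\le\Prob{\sup_{s\le h}|R_s-r_0|>\sqrt{c_0}/2}$ with strict inequality, so take $h=8\zeta/c_0$ (or replace $\zeta$ by $2\zeta$) to cover the closed event; and one should state explicitly that $u^j_l$ and $a^j$ are adapted (they are, being built from $\ZZ(s)$ and $Y_s$), which BDG requires.
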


This lemma is a variation of \cite[Lemma 2.3.2]{nualart}.
\begin{proof}[Proof of Lemma \ref{lemma:near_zero_probability_est}]
By \eqref{eq:bound_A_bA_bB}, \eqref{eq:p-moment_est_ZZ_and_tZZ} and \eqref{eq:def_r_0}, there is  $c_p>0$ independent of $v$ such that
\begin{align*}
    \EE\sup_{0\leq s\leq T}|u(s)|^p\leq c_p ;\quad \EE\Big(\int_0^T|a(s)|^2ds\Big)^p\leq c_p, \quad \eps\in(0,\eps_0].
\end{align*}
This and Markov's inequality imply that for some $c_p>0$ independent of $v\in \Sph^{d-1}$,
\begin{align}\label{eq:tail_est_sup_theta_s}
     \ProbBig{\sup_{0\leq s\leq T}\Big(|u(s)|+\int_0^s|a(r)|^2dr\Big) > \zeta^{-\frac{1}{8}}}\leq c_p\zeta^{\frac{1}{8}p},\quad \zeta>0,\  \eps\in(0,\eps_0].
\end{align}

Recalling the definitions of $M_t$ and $N_t$ in \eqref{eq:def_R^j_t}, we define, for each $\zeta>0$ and each $\eps \in(0,\eps_0]$,
\begin{align*}
    B_{0}^{\zeta,\eps}&=\Big\{\int_0^T|R_s|^2ds \leq \zeta,\ \sup_{0\leq s\leq T}\Big(|u(s)|+\int_0^s|a(r)|^2dr\Big)\leq \zeta^{-\frac{1}{8}}\Big\},\\
    B^{\zeta, \eps}_{1,j}&=\{\langle M^j\rangle_T \leq 2\zeta^\frac{1}{4}, \sup_{0\leq t\leq T}|M^j_t|\geq d^{-1}\zeta^\frac{1}{16}\},\\
    B^{\zeta, \eps}_{2,j}&=\{\langle N^j\rangle_T \leq \zeta^\frac{3}{4}, \sup_{0\leq t\leq T}|N^j_t|\geq \zeta^\frac{1}{4}\},
\end{align*}
where the dependence on $\eps$ comes from $T=T(\eps)$, $R_s$, $u(s)$, $a(s)$, $M_s$, and $N_s$.
The exponential martingale inequality implies that, for some $c_p>0$ in dependent of $v$,
\begin{align*}
    \Prob{(\cup_{j=1}^d B^{\zeta,\eps}_{1,j})\cup (\cup_{j=1}^d B^{\zeta,\eps}_{2,j})} \leq 2d \exp(-\tfrac{\zeta^{-\frac{1}{8}}}{4d^2})+ 2d \exp(-\tfrac{\zeta^{-\frac{1}{4}}}{2})\leq c_p \zeta^{\frac{1}{8}p}, \quad \zeta>0,\ \eps\in(0,\eps_0].
\end{align*}

Observe that by this and \eqref{eq:tail_est_sup_theta_s}, we can attain the desired result \eqref{eq:desired_result_lemma_near_zero_probability_est} if we can show there is a $\zeta_0>0$ such that
\begin{align}\label{eq:key_set_inclusion}
    B^{\zeta,\eps}_{0}\subset  (\cup_{j=1}^d B^{\zeta,\eps}_{1,j})\cup (\cup_{j=1}^d B^{\zeta,\eps}_{2,j}), \quad \zeta \in (0, \zeta_0),\ \eps\in(0,\eps_0].
\end{align}

Hence, it remains to show \eqref{eq:key_set_inclusion}. Choose a  $\zeta_0$ to satisfy, with $c_0, r_0$ given in \eqref{eq:def_r_0},
\begin{align}\label{eq:zeta_0_condition}
    2d(\zeta_0^\frac{1}{4}+\zeta_0^\frac{7}{16})\leq c_0 = |r_0|^2, \quad 4\zeta_0^\frac{1}{16}\leq \sqrt{c_0}, \quad \text{ and }\quad \zeta_0^\frac{1}{3}\leq \frac{1}{2}.
\end{align}

We show \eqref{eq:key_set_inclusion} with this chosen $\zeta_0$.
Argue by contradiction. Suppose \eqref{eq:key_set_inclusion} is false. Then, for some $\zeta \in (0,\zeta_0)$ and some $\eps\in(0,\eps_0]$, there is 
\begin{align}\label{eq:omega_condition}
    \omega \in  B^{\zeta,\eps}_{0}-  \Big((\cup_{j=1}^d B^{\zeta,\eps}_{1,j})\cup (\cup_{j=1}^d B^{\zeta,\eps}_{2,j})\Big).
\end{align}
From now on, fix this pair of $\zeta$ and $\eps$, and evaluate all random variables at this $\omega$. 

By $\omega \in  B^{\zeta,\eps}_{0}$ due to \eqref{eq:omega_condition}, we clearly have
\begin{align*}
    \qd{N^j}_T \leq \int_0^T|R^j_s u^j(s)|^2 ds \leq \Big(\sup_{0\leq s\leq T}|u(s)|^2\Big)\int_0^T|R^j_s|^2ds \leq \zeta^{-\frac{2}{8}+1}=\zeta^\frac{3}{4}.
\end{align*}
Then, since $\omega \not\in  B^{\zeta,\eps}_{2,j}$, $j=1,2,\dots,d$, due to \eqref{eq:omega_condition}, we deduce
\begin{align*}
    \sup_{0\leq t\leq T}\Big|\int_0^tR^j_su^j_l(s)dW^l_s\Big|=\sup_{0\leq t\leq T}|N^j_t|<\zeta^\frac{1}{4}, \quad j=1,2,\dots,d.
\end{align*}
By $\omega \in  B^{\zeta,\eps}_{0}$ due to \eqref{eq:omega_condition} and the Cauchy--Schwarz inequality, we have
\begin{align*}
    \sup_{0\leq t\leq T}\Big|\int_0^tR^j_sa^j(s)ds\Big|\leq \Big(\int_0^T|R^j_s|^2ds\Big)^\frac{1}{2}\Big(\int_0^T|a^j(s)|^2ds\Big)^\frac{1}{2}\leq \zeta^{\frac{1}{2}-\frac{1}{16}}=\zeta^\frac{7}{16}.
\end{align*}
It\^o formula applied to \eqref{eq:def_R^j_t} gives $|R_t|^2 = |r_0|^2 +\sum_{j=1}^d 2(\int_0^t R_s^j u^j_ldW^l_s + \int_0^tR^j_sa^j(s)ds)+\sum_{j=1}^d\qd{M^j}_t$. The above two displays, \eqref{eq:zeta_0_condition}, and $\omega \in  B^{\zeta,\eps}_{0}$ due to \eqref{eq:omega_condition} imply
\begin{align*}
    \int_0^T\sum_{j=1}^d\qd{M^j}_tdt &= \int_0^T|R_s|^2 dt-T|r_0|^2  - \int_0^T \Big(\sum_{j=1}^d 2\int_0^t R_s^j dR^j_s\Big) dt \\
    &\leq \zeta - T|r_0|^2 + 2dT(\zeta^\frac{1}{4}+\zeta^\frac{7}{16}) \leq \zeta.
\end{align*}
Because $t\mapsto \sum_{j=1}^d\qd{M^j}_t$ is nondecreasing, the above display indicates
\begin{align*}
    \gamma \sum_{j=1}^d\qd{M^j}_{T-\gamma}\leq\zeta, \quad \gamma \leq 1 \leq T.
\end{align*}
Since $\omega \in  B^{\zeta,\eps}_{0}$ implies $\sup_{0\leq s\leq T}|u(s)|\leq \zeta^{-\frac{1}{8}}$, by the definition of $M_t$ in \eqref{eq:def_R^j_t}, we get
\begin{align*}
    \sum_{j=1}^d(\qd{M^j}_T-\qd{M^j}_{T-\gamma})\leq \gamma \zeta^{-\frac{1}{4}}.
\end{align*}
The above two displays yield $\sum_{j=1}^d\qd{M^j}_T\leq \gamma^{-1}\zeta + \gamma \zeta^{-\frac{1}{4}}$. By \eqref{eq:zeta_0_condition}, we can set $\gamma = \zeta^\frac{1}{2} <\zeta_0^\frac{1}{2}<1$ to obtain
\begin{align*}
    \qd{M^j}_T \leq \sum_{j=1}^d\qd{M^j}_T \leq \zeta^{-\frac{1}{2}+1}+\zeta^{\frac{1}{2}-\frac{1}{4}}\leq 2\zeta^\frac{1}{4}.
\end{align*}
Since $\omega \not \in B^{\zeta,\eps}_{1,j}$, $j=1,2,\dots,d$, due to \eqref{eq:omega_condition}, we have
\begin{align}\label{eq:sup_|M_t|_est}
    \sup_{0\leq t\leq T}|M_t| < \sum_{j=1}^d \sup_{0\leq t\leq T}|M^j_t| < dd^{-1}\zeta^\frac{1}{16}=\zeta^\frac{1}{16}.
\end{align}
On the other hand, Markov's inequality and $\omega \in B^{\zeta,\eps}_{0}$ imply,
\begin{align*}
    \mathbf{m}\{t\in[0,T]:|R_t|\geq \zeta^\frac{1}{3}\}\leq \frac{1}{\zeta^\frac{2}{3}}\int_0^T|R_t|^2dt \leq \zeta^\frac{1}{3},
\end{align*}
where $\mathbf{m}$ is the Lebesgue measure on the real line. By \eqref{eq:sup_|M_t|_est} and \eqref{eq:def_R^j_t}, we thus have
\begin{align*}
    \mathbf{m}\{t\in[0,T]:|r_0+A_t|\geq \zeta^\frac{1}{3}+\zeta^\frac{1}{16}\}\leq \zeta^\frac{1}{3}.
\end{align*}
Note that $\zeta^\frac{1}{3}<\zeta_0^\frac{1}{3}\leq \frac{1}{2} \leq \frac{1}{2}T$ due to \eqref{eq:zeta_0_condition} and $T\geq 1$. Hence, for each $t\in[0,T]$, there is $t'\in[0,T]$ satisfying $|t-t'|\leq 2\zeta^\frac{1}{3}$ and $|r_0+A_{t'}|<\zeta^\frac{1}{3}+\zeta^\frac{1}{16}$. Therefore, for each $t\in[0,T]$, it holds that, by the definition of $A_t$ in \eqref{eq:def_R^j_t},
\begin{align*}
    |r_0+A_t|&\leq |r_0+A_{t'}|+\Big|\int_{t'}^ta(s)ds\Big| < \zeta^\frac{1}{3}+\zeta^\frac{1}{16} + \Big|\int_{t'}^t|a(s)|^2ds\Big|^\frac{1}{2}|t-t'|^\frac{1}{2}\\
    &\leq \zeta^\frac{1}{3}+\zeta^\frac{1}{16}+ \sqrt{2}\zeta^{-\frac{1}{16}+\frac{1}{6}}\leq 4\zeta^\frac{1}{16}.
\end{align*}
Set $t=0$ to obtain $|r_0|<  4\zeta^\frac{1}{16}$. However, $\sqrt{c_0}=|r_0|$, due to \eqref{eq:def_r_0}, and \eqref{eq:zeta_0_condition} imply that
\begin{align*}
    \sqrt{c_0}=|r_0|<  4\zeta^\frac{1}{16} <4\zeta_0^\frac{1}{16}\leq \sqrt{c_0}.
\end{align*}
By contradiction, $\eqref{eq:key_set_inclusion}$ holds for $\zeta_0$ satisfying \eqref{eq:zeta_0_condition}.
\end{proof}

\subsection{Proof of Lemma \ref{Lemma:density_est}} \label{subsection:proof_of_density_est_lemma}
\subsubsection{Part (1)}
We will apply Theorem \ref{thm:DensityDifference} to $U_T$ and $Z_T$. First note that, since $U_t, Z_t$ are solutions of SDEs, by \cite[Theorem 2.2.2]{nualart}, we know they belong 
to~$\mathbb{D}^{3,\infty}$, see Remark~\ref{rem:smooth-coef}. 
Since $U_T=M_T+\eps V_T$, using boundedness of $V_T$ and applying exponential martingale inequality to $M_T$ and $Z_T$, after a simple computation, we have that there are constants $C,\ c>0$ such that 
\begin{align}\label{eq:tail_bound_U_T_and_Z_T}
    \Prob{|U_T-x|<2}, \quad \Prob{|Z_T-x|<2}\leq C e^{-c|x|^2}.
\end{align}

Theorem \ref{thm:DensityDifference}, \eqref{eq:Derivative_difference_est},  \eqref{eq:Sobolev_norm_bound_U_T}, \eqref{eq:Sobolev_norm_bound_Z_T},
\eqref{eq:goal_est_det_malliavin_covariance} and \eqref{eq:tail_bound_U_T_and_Z_T} give rise to, for some $C', c'>0$,
\begin{align*}
    & |\rho^y_{U_T}(x)-\rho_{Z_T}(x)|\\ &\leq C\|U_T - Z_T\|_{2, \gamma,T}\Big(\big(1\vee\E{|\det \sigma_{U_T}|^{-\gamma}}\big)\big(1+\|U_T\|_{3, \gamma,T}\big)\Big)^a\\
    &\cdot \Big(\big(1\vee\E{|\det \sigma_{Z_T}|^{-\gamma}}\big)\big(1+\|Z_T\|_{3, \gamma,T}\big)\Big)^a\cdot \big(\Prob{|U_T-x|<2}+\Prob{|Z_T-x|<2}\big)^b\\
    &\leq C' \eps^{\delta}\big(1+\pp(\eps^{1-\upsilon}|y|)\big) e^{-c'|x|^2}.
\end{align*}

\smallskip

\subsubsection{Part (2)}  We estimate the difference $|\rho_{Z_T}(x)-\rho_{Z_\infty}(x)|$. The covariance matrix of $Z_T$ is given by
\begin{align*}
    \Ceps^{jk}= \E{Z^j_TZ^k_T}=\sum_{l=1}^d\sigma^j_l(0)\sigma^k_l(0)\frac{1-e^{-(\lj+\lk)T}}{\lj + \lk}.
\end{align*}
By $T\geq \theta'_0\log\eps^{-1}$, we have $\lim_{\eps\to0}\Ceps^{jk}=\Czero^{jk}$. Therefore, there is a constant $c>0$ such that
\begin{align}\label{eq:gaussian_bound}
    e^{-\frac{1}{2}x^\intercal\Ceps^{-1}x},\quad e^{-\frac{1}{2}x^\intercal\Czero^{-1}x} \leq e^{-c|x|^2}.
\end{align}
We can write
\begin{align*}
    |\rho_{Z_T}(x)-\rho_{Z_\infty}(x)|\leq \Big|\rho_{Z_T}-\sqrt{\tfrac{\det\Ceps}{\det \Czero}}\rho_{Z_T}\Big|+\Big|\sqrt{\tfrac{\det\Ceps}{\det \Czero}}\rho_{Z_T}-\rho_{Z_\infty}\Big|.
\end{align*}
Since $\sqrt{\frac{\det\Ceps}{\det \Czero}}$ can be viewed as the square root of a polynomial of $e^{-T}$ with positive fractional powers, one can see that $|1-\sqrt{\frac{\det\Ceps}{\det \Czero}}|\leq C_1 (e^{-T})^{q_1}$ for some $C_1, q_1>0$. Therefore, using the hypothesis $\theta\log\eps^{-1}\leq T$ and \eqref{eq:gaussian_bound}, we obtain
\begin{align*}
    \Big|\rho_{Z_T}-\sqrt{\tfrac{\det\Ceps}{\det \Czero}}\rho_{Z_T}\Big|\leq C_1\eps^{q_1\theta'_0}e^{-c|x|^2}.
\end{align*}

For any matrix, we use $|\cdot|$ to denote its Frobenius norm. Then observe that, for some $q_2>0$, we have, for some $C_2,q_2>0$, 
\begin{align*}
    |\Ceps^{-1} - \Czero^{-1}|\leq |\Czero^{-1}||\Czero - \Ceps||\Ceps^{-1}|\leq C_2(e^{-T})^{q_2} \leq C_2\eps^{q_2\theta'_0}.
\end{align*}

As $\Ceps$ and $ \Czero$ are positive definite, so are their inverses. Then by \eqref{eq:gaussian_bound}, we can get
\begin{align*}
    \begin{split}
        \big|e^{-\frac{1}{2}x^\intercal\Ceps^{-1}x }-e^{-\frac{1}{2}x^\intercal\Czero^{-1}x}\big| &\leq \big(e^{-\frac{1}{2}x^\intercal\Ceps^{-1}x}\vee e^{-\frac{1}{2}x^\intercal\Czero^{-1}x}\big)\big|e^{-\frac{1}{2}|x^\intercal(\Ceps^{-1}-\Czero^{-1})x|}-1\big|\\
    &\leq \tfrac{1}{2}e^{-c|x|^2}|x|^2|\Ceps^{-1}-\Czero^{-1}|\leq C_3\eps^{q_2\theta'_0}e^{-c'|x|^2}|x|^2.
    \end{split}
\end{align*}
Therefore, we have
\begin{align*}
    \begin{split}
        \Big|\sqrt{\tfrac{\det\Ceps}{\det \Czero}}\rho_{Z_T}-\rho_{Z_\infty}\Big|\leq C_4\eps^{q_2\theta'_0}e^{-c''|x|^2}.
    \end{split}
\end{align*}
In conclusion, $ |\rho_{Z_T}(x)-\rho_{Z_\infty}(x)|\leq C'\eps^{\delta'}e^{-c'''|x|^2}$ which completes the proof of Lemma~\ref{Lemma:density_est}.

\subsection{Proofs of auxiliary lemmas}\label{section:auxiliary_lemmas}

\begin{proof}[Proof of Lemma \ref{lemma:solve_inequality_system}]
Let $b(t)= \sum_{i=1}^d a^i(t)$. Summing up the inequalities \eqref{eq:system_of_ineq} in $i$ and using $\lam_1 >\lam_2 > ... >\lam_d$, we get
\begin{align*}
    0\leq b(t)\lesssim \eps^m e^{-2\lam_d r} +\eps^2 \int_r^t e^{2\lam_1 s}b(s)ds.
\end{align*}
Now Gronwall's inequality implies that, for some constant $c$ independent of~$\eps$,
\begin{align*}
    0\leq b(t) \lesssim \eps^m e^{-2\lam_d r} e^{c\eps^2 e^{2\lam_1 T}} .
\end{align*}
Finally, we use \eqref{eq:choice_of_theta} and the fact $a^i(t)\geq 0$ to derive $a^i(t)\leq b(t)\lesssim \eps^m e^{-2\lam_d r}$, and it is clear from this computation that all the constants
involved do not depend on $r$.
\end{proof}

\bigskip

\begin{proof}[Proof of Lemma \ref{lemma:smallest_eigenvalue}]
This proof is a modification of the proof of \cite[Lemma~2.3.1]{nualart}.

Let us fix $\zeta>0$. Let $u_1, u_2, ..., u_{N_d}$ be unit vectors in $\R^d$ such that 
\begin{align}\label{eq:unit_ball_covering}
    \Sph^{d-1} \subset \cup_{k=1}^{N_d}\{x\in \R^d: |x-u_k| < \tfrac{\zeta^2}{4} \},
\end{align}
where $\Sph^{d-1}$ is the unit sphere  and  $N_d$ is chosen so that
\begin{align}\label{eq:N_d_est}
    N_d\leq C_d \zeta^{-2d}
\end{align}
for a positive constant $C_d$ only depending on the dimension $d$.
Writing $|\mathcal{A}|= (\sum_{i,j=1}^d |\mathcal{A}^{ij}|^2)^\frac{1}{2}$, we obtain
\begin{align}\label{eq:smallest_eigenvalue_est_1}
\begin{split}
     \Prob{\nu \leq \zeta} & = \Prob{\inf_{|v|=1}\langle v , \mathcal{A}v\rangle \leq \zeta} \\
    & \leq \Prob{\inf_{|v|=1}\langle v , \mathcal{A}v\rangle \leq \zeta; |\mathcal{A}|\leq \tfrac{1}{\zeta}} + \Prob{|\mathcal{A}|> \tfrac{1}{\zeta}}.
\end{split}
\end{align}
The second term can be estimated using Markov's inequality as
\begin{align}\label{eq:smallest_eigenvalue_est_2}
    \Prob{|\mathcal{A}|> \tfrac{1}{\zeta}}\leq \zeta^p \E{|\mathcal{A}|^p} = \zeta^p \EBig{\Big|\sum_{i,j=1}^d |\mathcal{A}^{ij}|^2\Big|^\frac{p}{2}}.
\end{align}
For the first term, more effort is needed. On the set 
\begin{align*}
B=\{\inf_{|v|=1}\langle v , \mathcal{A}v\rangle \leq \zeta; |\mathcal{A}|\leq \tfrac{1}{\zeta} \},
\end{align*}
suppose $\langle u_k , \mathcal{A}u_k\rangle \geq 2\zeta$ for all $k=1,\dots,N_d$. For any $v$ with $|v|=1$, by (\ref{eq:unit_ball_covering}), there is $u_k$ such that $|v-u_k|<\frac{\zeta^2}{4}$. Then observe that, on $B$,
\begin{align*}
    \langle v , \mathcal{A}v\rangle  &\geq \langle u_k , \mathcal{A}u_k\rangle  - |\langle v , \mathcal{A}v\rangle - \langle u_k , \mathcal{A}u_k\rangle | \\
    & \geq 2\zeta - \big(|\langle v , \mathcal{A}v\rangle - \langle v , \mathcal{A}u_k\rangle |+|\langle v , \mathcal{A}u_k\rangle - \langle u_k , \mathcal{A}u_k\rangle |\big) \\
    &\geq 2\zeta - 2|\mathcal{A}||v-u_k| >2\zeta - 2 \tfrac{1}{\zeta}\tfrac{\zeta^2}{4} =\tfrac{3}{2}\zeta.
\end{align*}
But on $B$, we necessarily have $\inf_{|v|=1}\langle v , \mathcal{A}v\rangle \leq \zeta$. Hence, by contradiction, we must have $B\subset\cup_{k=1}^{N_d} \{\langle u_k , \mathcal{A}u_k\rangle < 2\zeta \}$.
This fact together with \eqref{eq:N_d_est} implies
\begin{align*}
    \Prob{\inf_{|v|=1}\langle v , \mathcal{A}v\rangle \leq \zeta; |\mathcal{A}|&\leq \tfrac{1}{\zeta}} \leq \mathbb{P}(\cup_{k=1}^{N_d} \{\langle u_k , \mathcal{A}u_k\rangle < 2\zeta \} )\\
    &\leq \sum_{k=1}^{N_d} (2\zeta)^{p+2d} \E{|\langle u_k , \mathcal{A}u_k\rangle|^{-(p+2d)}}\\
     &\leq N_d (2\zeta)^{p+2d}\sup_{|v|=1} \E{|\langle v , \mathcal{A}v\rangle|^{-(p+2d)}} \\
     & \leq 2^{p+2d}C_d \zeta^{p}\sup_{|v|=1} \E{|\langle v , \mathcal{A}v\rangle|^{-(p+2d)}}.
\end{align*}
The above display, (\ref{eq:smallest_eigenvalue_est_1}) and (\ref{eq:smallest_eigenvalue_est_2}) show that there is $C_{p,d}>0$ depending only on~$p$ and~$d$ such that~\eqref{eq:smallest_eig} holds.
\end{proof}

\bibliographystyle{alpha}
\end{document}